\theoremstyle{plain} 
\newtheorem{theorem}{Theorem}[section]
\newtheorem{corollary}[theorem]{Corollary}
\newtheorem{definition}[theorem]{Definition}
\newtheorem{proposition}[theorem]{Proposition}
\newtheorem{remark}[theorem]{Remark}
\numberwithin{equation}{section}
\newcommand{\ba}{{\mathbf a}}
\newcommand{\bb}{{\mathbf b}}
\newcommand{\balpha}{{\boldsymbol \alpha}}
\newcommand{\bnu}{{\boldsymbol \nu}}
\newcommand{\bt}{{\mathbf t}}
\newcommand{\bl}{{\boldsymbol{l}}}
\newcommand{\bw}{{\boldsymbol{w}}}
\newcommand{\cc}{{\mathbb C}}
\newcommand{\pp}{{\mathbb P}}
\newcommand{\zz}{{\mathbb Z}}
\newcommand{\pc}{\pp^1_{\cc}}
\newcommand{\Pp}{{\mathcal P}}
\newcommand{\cO}{{\mathcal O}}
\newcommand{\Oo}{\mathcal O_{X}}
\newcommand{\Mm}{{\mathcal M}}
\newcommand{\Nn}{{\mathcal N}}
\newcommand{\cW}{{\mathcal W}}
\newcommand\rank{\mathop{\rm rank}\nolimits}
\newcommand\End{\mathop{\rm End}\nolimits}
\newcommand{\len}{\mathop{\rm length}\nolimits}
\newcommand{\res}{\mathop{\sf res}\nolimits}
\newcommand{\App}{\mathop{\rm App}\nolimits}
\newcommand{\Bun}{\mathop{\rm Bun}\nolimits}
\newcommand\lra{\longrightarrow}
\begin{document}

\author[F. Loray]{Frank Loray}
\address{CNRS, IRMAR, Campus de Beaulieu\\
Universit\'e de Rennes I\\
35042 Rennes Cedex, France}
\email{frank.loray@univ-rennes1.fr}

\author[M.-H. Saito]{Masa-Hiko Saito}
\address{Department of Mathematics, Graduate School of Science\\
Kobe University\\
Kobe, Rokko, 657-8501, Japan}
\email{mhsaito@math.kobe-u.ac.jp}
\thanks{This research was partly supported by JSPS Grant-in-Aid for Scientific Research (S)19104002, 24224001,  
challenging Exploratory Research  23654010}

\title[Lagrangian fibrations on moduli spaces]{Lagrangian fibrations in duality on moduli space
of rank two logarithmic connections over the projective line}

\subjclass[2010]{Primary 34M55; Secondary 14D20, 32G20, 32G34}

\keywords{Painlev\'e VI, Garnier systems, 
Logarithmic connection, Parabolic structure, Projective line, 
apparent singularity, Okamoto-Painlev\'e pair, Del Pezzo surface}

\begin{abstract}
We study the  moduli space of logarithmic connections of rank $2$
on $\pp_{\cc}^1$ minus $n$ points with fixed spectral data. 
There are two natural Lagrangian maps: one towards apparent singularities
of the associated fuchsian scalar equation, and another one towards
moduli of parabolic bundles. We show that these are transversal and dual to each other.
In case $n=5$, we recover the beautiful geometry of Del Pezzo surfaces of degree $4$.
\end{abstract}

\maketitle

\section{Introduction}

In this paper, we investigate the geometry of moduli space of rank $2$
logarithmic connections over the Riemann sphere and extend some results 
obtained together with Carlos Simpson in the $4$-point case \cite{LoraySaitoSimpson}.
Precisely, we fix a reduced effective divisor $D=t_1+\cdots+t_n$ on $X:=\mathbb P^1_{\mathbb C}$
and consider those pairs $(E,\nabla)$ where $E$ is a rank $2$ vector bundle
over $X$ and $\nabla:E\to E\otimes\Omega^1_{X}(D)$
a connection having simple poles supported by $D$. At each pole, we have two residual eigenvalues 
$\{\nu_i^+,\nu_i^-\}$, $i=1,\ldots,n$; they satisfy Fuchs relation $\sum_{i}(\nu_i^++\nu_i^-)+d=0$ where $d=\deg(E)$. Moreover, we can naturally introduce 
parabolic strucures $\bl= \{ l_i \}_{1 \leq i \leq n}$ such that $l_i$ is a one dimensional subspace of $E_{|t_i}$ which corresponds to an eigen space of  the residue of $\nabla$ at $t_i$ with the eigenvalue $\nu_{i}^+$.  Note that when $\nu_i^+\not=\nu_i^-$,
the parabolic structure $\bl$ is determined by the connection $(E,\nabla)$.
Fixing  spectral data $\boldsymbol{\nu}=(\nu_i^\pm)$ with integral sum $-d$, 
by introducing the weight $\bw$ for stablity, we can construct 
the moduli space $M^{\bw}(\bt,\boldsymbol{\nu})$ of $\bw$-stable 
$\bnu$-parabolic connections $(E,\nabla, \bl)$ by Geometric Invariant Theory \cite{InabaIwasakiSaito} and the moduli space $M^{\bw}(\bt,\boldsymbol{\nu})$ turns to be a smooth irreducible quasi-projective variety of dimension $2(n-3)$.
It is moreover equipped with a natural holomorphic symplectic structure. 
We note that, when $\sum_{i=1,\ldots,n}\nu_i^{\epsilon_i}\not\in \zz$, 
for any choice $(\epsilon_i) \in \{+,-\}^n $, every parabolic connection $(E,\nabla, \bl)$ is irreducible,
and thus stable; the moduli space $M^{\bw}(\bt,\boldsymbol{\nu})$ does not depend on the choice of weights $\bw$
in this case. 
These moduli spaces occur as space of initial conditions for Garnier systems, the case $n=4$ corresponding
to the Painlev\'e VI equation (cf. \cite{JimboMiwa,InabaIwasakiSaito2}).

There are many isomorphisms between these moduli spaces. For instance, 
twisting by a rank $1$ connection (with the same poles), one can translate
the spectral data as $(\nu_i^{\pm})\mapsto(\nu_i^{\pm}+\mu_i)$ with the only
restriction $\sum_i \mu_i\in\mathbb Z$. Also, 
by using elementary (or Schlesinger) transformations, we may shift each
$\nu_i^{\pm}$ by arbitrary integer, thus freely shifting the degree $d$ of vector bundles. 
In particular, it is enough for our purpose
to consider the case where $\sum_{i}(\nu_i^++\nu_i^-) =1$, which means by Fuchs relations
that $d=\det(E)=-1$.

\subsection{Apparent map}
There are two natural Lagrangian fibrations on these moduli spaces. 
One of them is given by the ``apparent map'' 
$$
\mathrm{App}:M^{\bw} (\bt,\bnu)\dashrightarrow \vert \mathcal O_{X}(n-3)\vert \simeq {\mathbb P}^{n-3}_{\mathbb C},
$$ 
which is a rational dominant map towards the projective space of the linear system
(see \cite{DubrovinMazzocco,InabaIwasakiSaito2, SaitoSzabo}). 
Here, we need to fix degree $d=-1$.
The image $\mathrm{App}(E,\nabla)$
is defined by the zero divisor of the composite map
$$\mathcal O_{X}\to E\stackrel{\nabla}{\longrightarrow} E\otimes\Omega^1_{X}(D)\to (E/\mathcal O_{X})\otimes\Omega^1_{X}(D).$$
For a generic connection $(E,\nabla)$, it is well-known that the underlying bundle is $E=\mathcal O_{X}\oplus\mathcal O_{X}(-1)$ and the map $\mathcal O_{X}\to E$ is therefore unique up to 
scalar multiplication; the right-hand-side arrow is just the quotient by the image of $\mathcal O_{X}\to E$.
The apparent map is therefore well-defined on a large open set of $M^{\bw} (\bt,\boldsymbol{\nu})$.
Choosing the image of $\mathcal O_{X}$ as a cyclic vector allow to derive a $2nd$ order
Fuchsian differential equation; $\mathrm{App}(E,\nabla)$ gives the position of extra apparent singular points arising
from this construction, whence the name. The apparent map $\mathrm{App}$ has indeterminacy points where $E\not\simeq\mathcal O_{X}\oplus\mathcal O_{X}(-1)$ or
$E \simeq \mathcal O_{X}\oplus\mathcal O_{X}(-1)$ and $\mathcal O_{X}$ is invariant under $\nabla$.  
 
\subsection{Parabolic fibration}

Let $\Pp_d(\bt)$ be the moduli stack of undecomposable  parabolic bundles of degree $d$,  and denote by  $P_d(\bt)$ its corresponding coarse moduli space.  It is known that the natural map 
$\Pp_d(\bt) \lra P_d(\bt)$ is a ${\bf G}_m$-gerbe.  
We assume that $\bnu$ is generic  such as  undecomposable parabolic bundles of degree $d$ 
coincide with the $\bnu$-flat parabolic bundles, i.e, those admitting a connection with prescribed parabolic and spectral data $(\bl,\boldsymbol\nu)$ (see Proposition \ref{prop:flat=undecomposable}).  
Under this assumption,  we can define the second fibration, which is  more natural, but actually more subtle to define. It is the forgetfull map 
$(E,\nabla, \bl) \mapsto (E,\bl)$
towards the coarse moduli space $P_d(\bt)$  (here we do not need anymore $d=-1$). Always assuming generic spectral data $\boldsymbol\nu$,
the parabolic structure $\bl=\{l_i\}$ is the data of the residual eigendirection $l_i\subset E_{|t_i}$ of $\nabla$ with respect to the eigenvalue $\nu_i^+$ for each pole $t_i$. However, as observed in \cite{ArinkinLysenko1,ArinkinLysenko2},
the coarse moduli space $P_d(\bt)$  is a non Hausdorff topological space, or a nonseparated scheme. 
To get a nice moduli space, we have to 
impose a stability condition with respect to weights $\bw=(w_i)\in[0,1]^n$ (see \cite{MehtaSeshadri});
the moduli space $P_d^{\bw}(\bt)$ of $\bw$-semistable
degree $d$ parabolic bundles $(E,\bl)$ is therefore a normal irreducible projective variety; the open subset of 
$\bw$-stable bundles is smooth. These moduli spaces actually depend on the choice of 
weights. For generic weights, $\bw$-semistable=$\bw$-stable and we get a 
smooth projective variety. Precisely, there are finitely many hyperplanes (walls) cutting out $[0,1]^n$ into
finitely many chambers and strictly $\bw$-semistable bundles only occur along walls.
The moduli space $P_d^{\bw}(\bt)$ is locally constant in each chamber and is either 
empty, or has expected dimension $n-3$. It becomes singular along walls (or maybe reduced to a single point).
A generic $\bw$-stable connection is also
$\bw'$-stable for any weight $\bw'$ (for which the corresponding moduli space has the right dimension).
We thus get natural birational maps $P_d^{\bw}(\bt)\stackrel{\sim}{\dashrightarrow}P_d^{\bw'}(\bt)$ identifying generic bundles that occur in both moduli spaces.
An important fact (see Proposition \ref{prop:undecomposable=stable})
is that a parabolic bundle $(E,\bl)$
is undecomposable   if, and only if, it is $\bw$-stable for a convenient choice of weights. As  a consequence,
the coarse moduli space $P_{d}(\bt)$ 
of undecomposable  parabolic bundles of degree $d$  
is covered by those smooth projective varieties 
$P^{\bw}_d(\bt)$ when $\bw$ runs over all chambers (cf. Proposition \ref{prop:modulistack}): 
\begin{equation}\label{eq:patchintro}
P_{d}(\bt) = \cup_{i, finite } P_d^{\bw_{i}}(\bt).
\end{equation} 
In fact, the coarse moduli space is obtained by patching together these (non empty) projective charts
along (strict) open subsets; each of these projective charts are open and dense in the coarse moduli space.
Once we choose one of these charts, we get a rational map 
$M^{\bw} (\bt,\boldsymbol{\nu})\dashrightarrow P_d^{\bw}(\bt)$
which turns to be Lagrangian, like for $\mathrm{App}$ in case $d=-1$. Moreover  we can extend this rational map 
to a rational map  
$\mathrm{Bun}:M^{\bw} (\bt,\boldsymbol{\nu}) \dashrightarrow P_{d}(\bt)$, which turns to be 
a morphism when $\bnu$ is generic.   

\subsection{Results}

Assuming from now on $d=-1$, from the two rational maps $\mathrm{App}$ and $\mathrm{Bun}$, we obtain 
the rational map
\begin{equation}\label{eq:rational-map}
\mathrm{App}\times\mathrm{Bun}:M^{\bw}(\bt,\boldsymbol{\nu})\dashrightarrow
\vert\mathcal O_{X}(n-3)\vert\times P^{\bw}_{-1}(\bt).   
\end{equation}
In this paper, we will basically prove that this map is birational provided that $\sum_i \nu_i^-\not=0$.
However we will be able to give more precise information about 
the rational map (\ref{eq:rational-map}) by introducing a  choice of democratic weights  
$\bw$ (see (\ref{eq:democratic})) and a 
good open subset $M^{\bw}(\bt,\boldsymbol{\nu})^{0}\subset M^{\bw}(\bt,\boldsymbol{\nu})$.

For such a choice of weights $\bw$ in (\ref{eq:democratic}), 
$\bw$-stable parabolic bundles $(E,\bl)$ are precisely those flat
bundles for which $E=\mathcal O_{X}\oplus\mathcal O_{X}(-1)$ and none of the parabolics coincide 
with the special subbundle: $l_i\not\in\mathcal O_{X}$ for all $i=1,\ldots,n$. 
We are moreover able to construct a natural isomorphism $P_{-1}^{\bw}(\bt)\stackrel{\sim}{\to}\vert\mathcal O_{X}(n-3)\vert^{*}$
with the dual of the linear system involved in the apparent map (cf. Proposition \ref{Prop:OurMainChart}).
We therefore introduce the open subset  
$$M^{\bw} (\bt,\boldsymbol{\nu})^{0} :=\mathrm{Bun}^{-1}P_{-1}^{\bw}(\bt)\ \subset\ M^{\bw} (\bt,\boldsymbol{\nu}) $$
by imposing the conditions on $(E, \nabla, \bl)$  that $(E, \bl) \in P_{-1}^{\bw}(\bt)$. 
Then the two rational maps $\mathrm{App}$ and $\mathrm{Bun}$ now induce a natural morphism
\begin{equation}\label{eq:introd-map}
\mathrm{App}\times\mathrm{Bun}:M^{\bw} (\bt,\boldsymbol{\nu})^{0} \lra 
\vert\mathcal O_{X}(n-3)\vert\times \vert\mathcal O_{X}(n-3)\vert^{*} 
\end{equation}
and both $\mathrm{App}$ and $\mathrm{Bun}$ are Lagrangian.
We can state our result as follows.  

\begin{theorem}\label{MainTheoremIntro} Under the assumption that  $\sum_i \nu_i^-\not=0$, 
 the morphism $\mathrm{App}\times\mathrm{Bun}$ in (\ref{eq:introd-map})  
is an open embedding and its image  coincides with 
the complement of the incidence variety $\Sigma 
\subset \vert\mathcal O_{X}(n-3)\vert\times \vert\mathcal O_{X}(n-3)\vert^{*}$ for the duality.  
\end{theorem}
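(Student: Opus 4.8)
The plan is to construct an explicit inverse to the map $\mathrm{App}\times\mathrm{Bun}$ over the complement of the incidence variety $\Sigma$, thereby simultaneously proving injectivity, surjectivity onto $(\vert\mathcal O_X(n-3)\vert\times\vert\mathcal O_X(n-3)\vert^*)\setminus\Sigma$, and (by working in families) that the map is an open embedding. On the good open set $M^{\bw}(\bt,\bnu)^0$, we know from the preparatory results that every point has underlying bundle $E=\mathcal O_X\oplus\mathcal O_X(-1)$ with no parabolic $l_i$ lying on the special subbundle $\mathcal O_X$, and that $\mathrm{Bun}$ lands in $P_{-1}^{\bw}(\bt)\cong\vert\mathcal O_X(n-3)\vert^*$. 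Thus a point in the target is a pair $(q,H)$ where $q\in\vert\mathcal O_X(n-3)\vert$ encodes the $n-3$ apparent singularities (the divisor of the apparent map) and $H\in\vert\mathcal O_X(n-3)\vert^*$ encodes, via the identification of Proposition~\ref{Prop:OurMainChart}, the parabolic bundle $(E,\bl)$. First I would fix the cyclic-vector normalization: for $E=\mathcal O_X\oplus\mathcal O_X(-1)$ with $\mathcal O_X\hookrightarrow E$ the distinguished line, choosing this as cyclic vector turns $(E,\nabla)$ into a second-order Fuchsian scalar equation whose singular points are the $t_i$ together with the $n-3$ apparent singularities recorded by $q$, with local exponents dictated by the spectral data $\bnu$ at each $t_i$ and by the apparent (integral, non-logarithmic) condition at each point of $q$.

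The key step is then a dimension-and-parameter count showing that the scalar equation is uniquely determined by $(q,H)$. The coefficients of the Fuchsian equation are rational functions on $X$ with prescribed poles at $D$ and at $q$; the apparent condition at each point of $q$ (vanishing of the obstruction to a holomorphic, non-logarithmic second solution) imposes exactly $n-3$ scalar equations, matching the $n-3$ free accessory parameters. This is the classical Riemann-scheme count underlying the apparent map being Lagrangian, already invoked for $\mathrm{App}$; here the novelty is that the remaining freedom — the residual eigendirections $l_i$, equivalently the off-diagonal normalization of $\nabla$ in the fixed splitting — is pinned down precisely by $H$ through the isomorphism $P_{-1}^{\bw}(\bt)\cong\vert\mathcal O_X(n-3)\vert^*$. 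I would show that reconstructing $\nabla$ from the scalar equation and then reading off the parabolics $l_i$ gives back exactly the bundle-with-parabolic corresponding to $H$, so that the reconstruction is consistent, and that the recovered connection is again $\bw$-stable and lands in $M^{\bw}(\bt,\bnu)^0$.

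The role of the incidence variety $\Sigma$ and of the hypothesis $\sum_i\nu_i^-\neq 0$ is where the duality becomes visible, and this is the step I would handle most carefully. A pair $(q,H)$ lies on $\Sigma$ exactly when the point/divisor $q$ is incident to the hyperplane $H$ under the natural pairing; I expect that precisely on $\Sigma$ the reconstructed data degenerates — either the would-be cyclic vector fails to be cyclic (one of the apparent points collides with the special subbundle structure, forcing $E\not\cong\mathcal O_X\oplus\mathcal O_X(-1)$ or $\mathcal O_X$ becoming $\nabla$-invariant, i.e.\ an indeterminacy point of $\mathrm{App}$), or the reconstructed parabolic bundle leaves the chart $P_{-1}^{\bw}(\bt)$ (some $l_i\in\mathcal O_X$). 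Conversely, off $\Sigma$ the transversality asserted in the abstract guarantees these degenerations do not occur, so the reconstruction is everywhere valid. The condition $\sum_i\nu_i^-\neq 0$ is what rules out the reducible/special locus that would otherwise obstruct either the cyclic-vector construction or the generic-bundle hypothesis $E=\mathcal O_X\oplus\mathcal O_X(-1)$; I would verify it enters exactly at the point where one needs the line $\mathcal O_X\subset E$ to be non-invariant under $\nabla$.

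The main obstacle, I expect, is not the existence of the reconstruction but proving it is an \emph{isomorphism of schemes} (open embedding) rather than merely a bijection on closed points: one must check that the inverse map is algebraic and that the differential of $\mathrm{App}\times\mathrm{Bun}$ is everywhere an isomorphism. Since both factors are Lagrangian for the same symplectic form, their fibers are transverse Lagrangian foliations away from $\Sigma$, and the transversality statement (the heart of the paper, cf.\ the abstract) gives that $d(\mathrm{App}\times\mathrm{Bun})$ is injective; combined with equality of dimensions $2(n-3)=\dim(\vert\mathcal O_X(n-3)\vert\times\vert\mathcal O_X(n-3)\vert^*)$ and the explicit inverse, this upgrades the bijection to an open embedding with the stated image. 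The cleanest route is therefore to prove transversality of the two Lagrangian fibrations first — e.g.\ by computing the two foliations in the explicit coordinates given by the scalar-equation normalization and checking their tangent spaces meet only in zero off $\Sigma$ — and then deduce the theorem formally.
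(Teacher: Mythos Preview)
Your approach differs substantially from the paper's, and there is a genuine misconception about what happens on $\Sigma$ and how the hypothesis $\sum_i\nu_i^-\neq 0$ enters.

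The paper does not construct an inverse via the scalar Fuchsian equation. Instead it passes to the compactification $\overline{M^{\bw}(\bt,\bnu)^0}$ by $\lambda$-connections (allowing $\lambda=0$, i.e.\ Higgs fields) and proves that $\mathrm{App}\times\mathrm{Bun}$ extends to an \emph{isomorphism} onto all of $\vert\mathcal O_X(n-3)\vert\times\vert\mathcal O_X(n-3)\vert^*$; Theorem~\ref{MainTheoremIntro} then follows by restriction. The inverse is built cohomologically: write $(E,\bl)$ as an extension class $(b_{kl}a_l)\in H^1(X,L^{-1}(-D))$, take $\gamma\in H^0(X,L\otimes\Omega_X^1(D))$ as the apparent datum, and solve for the matrix $A_k$ of a $\lambda$-connection entry by entry. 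The crux is that the obstruction to finding the diagonal entry $\alpha_k$ lives in $H^1(X,\Omega_X^1)\cong\mathbb C$ and equals $(b_{kl}a_l)\gamma-\lambda(\alpha_k^0-\alpha_l^0)$; the class $(\alpha_k^0-\alpha_l^0)$ is nonzero precisely because its residue sum is $\sum_i\nu_i^-\neq 0$, so there is a \emph{unique} $\lambda$ killing the obstruction. This is where the hypothesis enters, not as a generic--bundle or cyclic--vector condition.

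Your description of $\Sigma$ is the main gap. Points of $\Sigma$ do not correspond to $E$ jumping type, to a parabolic landing on $\mathcal O_X$, or to the cyclic vector failing: the parabolic bundle is already fixed in $P_{-1}^{\bw}(\bt)$ and never leaves the chart. What happens on $\Sigma$ is that the unique $\lambda$ above is $0$: the Serre pairing $H^0(X,L\otimes\Omega_X^1(D))\times H^1(X,L^{-1}(-D))\to H^1(X,\Omega_X^1)\cong\mathbb C$ sends $(\gamma,(b_{kl}a_l))$ to $\lambda\sum_i\nu_i^-$, so $\lambda=0$ exactly on the incidence variety. Thus over $\Sigma$ the reconstruction yields a Higgs field, not a connection, and that is why the image of $M^{\bw}(\bt,\bnu)^0$ is the complement of $\Sigma$. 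Your ``degeneration'' picture would not give this clean dichotomy, and your proposed role for $\sum_i\nu_i^-\neq 0$ (ruling out $\mathcal O_X$ being $\nabla$-invariant) is only part of the story: when $\sum_i\nu_i^-=0$ the paper shows (Proposition~\ref{prop:ReducibleFibrations}) that $\mathrm{App}\times\mathrm{Bun}$ collapses onto $\Sigma$ with one-dimensional fibres, which your accessory-parameter count would not detect.

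Your idea of using transversality of the two Lagrangian fibrations to upgrade a bijection to an isomorphism is reasonable in spirit, but the paper avoids this entirely: once the compactified map is shown to be a bijective morphism between smooth projective varieties of the same dimension, it is an isomorphism, and restricting gives the open embedding for free.
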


In order to make the statement of Theorem \ref{MainTheoremIntro} more precise, let us 
introduce the homogeneous coordinates 
$\ba=(a_0:\cdots:a_{n-3})$ on
$\vert\mathcal O_{X}(n-3)\vert \simeq \pp_{\ba}^{n-3}$, and the dual coordinates 
$\bb=(b_0:\cdots:b_{n-3})$ on $\vert\mathcal O_{X}(n-3)\vert^{*}\simeq  \pp_{\bb}^{n-3}$. 
Let $\Sigma \subset \pp_{\ba}^{n-3} \times \pp_{\bb}^{n-3}$ be  
the incidence variety, whose  defining equation is given by $\sum_ka_kb_k=0$.
Then the morphism $\App \times \Bun$ induces the isomorphism (see Theorem \ref{MainTheoremIntro})
$$
\App \times \Bun: M^{\bw}(\bt,\bnu)^0 \stackrel{\sim}{\lra}
 (\pp_{\ba}^{n-3} \times \pp_{\bb}^{n-3}) \setminus \Sigma.
$$
Setting $\rho:=-\sum_i\nu_i^-$, the symplectic structure of $M(\bt,\boldsymbol{\nu})^0$ is given by 
$$\omega=d\eta\ \ \ \text{where}\ \ \ \eta
=\rho\frac{a_0db_0+\cdots+a_{n-3}db_{n-3}}{a_0b_0+\cdots+a_{n-3}b_{n-3}}.$$ 
When $\rho=0$, we prove that $\mathrm{App}\times\mathrm{Bun}$
degenerates: it is dominant onto $\Sigma$.

In order to prove Theorem \ref{MainTheoremIntro}, we intoduce a 
good compactification $\overline{M^{\bw}(\bt, \bnu)^{0}} \supset M^{\bw} (\bt,\boldsymbol{\nu})^{0}$
(cf. Section \ref{sec:compactmoduli}) which turns to be another moduli space:
\begin{equation}\label{eq:intro-compactification}
\overline{M^{\bw}(\bt, \bnu)^{0}} := \left\{ 
\begin{array}{l}(E, \nabla, \lambda \in \cc, \bl) \\ 
\mbox{$\lambda$-$\bnu$-parabolic connection} 
\end{array}
\ | \ \mbox{ $ (E, \bl) \in  
 P^{\bw}_{-1}(\bt)$ }  \right\}/ \simeq 
\end{equation}
where equivalence $\simeq$ is given by bundle isomorphisms and the natural $\mathbb C^*$-action by scalar multiplication.
The open subset $M^{\bw}(\bt, \bnu)^{0} \hookrightarrow \overline{M^{\bw}(\bt, \bnu)^{0}}$ is given by those $\lambda$-$\bnu$-parabolic connections for which $\lambda\not=0$, and the complement 
$$
M^{\bw}(\bt, \bnu)^0_H := 
\overline{M^{\bw}(\bt, \bnu)^{0}} \setminus M^{\bw}(\bt, \bnu)^{0} 
$$ 
is the moduli space of $\bw$-stable parabolic Higgs bundles.  
Now Theorem \ref{MainTheoremIntro} easily follows from the following (cf. Theorem \ref{th:Main})

\begin{theorem}\label{thm:MainMainThmIntro} 
If $\sum_i \nu_i^-\not=0$, the moduli space $\overline{M^{\bw}(\bt, \bnu)^0}$ is a smooth projective variety and we can extend the morphism  (\ref{eq:introd-map}) as 
an isomorphism
$$
\mathrm{App}\times\mathrm{Bun}:\overline{M^{\bw}(\bt,\bnu)^0}\stackrel{\sim}{\lra}
\vert \mathcal O_{X}(n-3) \vert \times  \vert \mathcal O_{X}(n-3) \vert^{\ast}
$$
Moreover, by restriction, we also obtain the isomorphism 
$$
\mathrm{App}\times\mathrm{Bun}_{|M^{\bw}(\bt, \bnu)_H^0} : M^{\bw}(\bt, \bnu)_H^0\stackrel{\sim}{\lra} \Sigma
$$
where $\Sigma \subset  \vert \mathcal O_{X}(n-3) \vert \times  \vert \mathcal O_{X}(n-3) \vert^{\ast}$ 
is the incidence variety for the duality.  
\end{theorem}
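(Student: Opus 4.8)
The plan is to analyze the map fibrewise over $\Bun$, exploiting the fact that, for the democratic weights, every parabolic bundle in $P_{-1}^{\bw}(\bt)$ has the same underlying bundle $E=\cO_X\oplus\cO_X(-1)$ with $l_i\not\subset\cO_X$ for all $i$. First I would fix a parabolic bundle $(E,\bl)\in P_{-1}^{\bw}(\bt)$, corresponding under Proposition \ref{Prop:OurMainChart} to a point $\bb\in\pp_\bb^{n-3}$, and describe the fibre $\Bun^{-1}(\bb)$ inside the compactification. Writing a $\lambda$-$\bnu$-parabolic connection on $E$ as $\lambda d+A$ in the splitting $E=\cO_X\oplus\cO_X(-1)$, the constraints fixing the residual trace $\lambda(\nu_i^++\nu_i^-)$ and prescribing the eigendirection $l_i$ with eigenvalue $\lambda\nu_i^+$ are all \emph{linear} in the pair $(A,\lambda)$; hence these connections form a vector space $\Vv_\bb$, and $\Bun^{-1}(\bb)=\pp(\Vv_\bb)$ after the $\cc^*$-quotient. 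A Riemann--Roch computation for the complex governing $\lambda$-connections with fixed bundle should give $\dim\Vv_\bb=n-2$ for every $\bb$, so that these spaces glue into a rank-$(n-2)$ vector bundle $\Vv\lra\pp_\bb^{n-3}$ with $\overline{M^{\bw}(\bt,\bnu)^0}=\pp(\Vv)$; this presentation already yields that the compactification is smooth and projective.

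Next I would compute the apparent map on each fibre. Since $E=\cO_X\oplus\cO_X(-1)$, the cyclic vector $\cO_X\hookrightarrow E$ is the first summand and is flat, so $\App$ simply reads off the lower-left entry $c$ of $A$, a section of $(E/\cO_X)\otimes\Omega^1_X(D)\simeq\cO_X(n-3)$; crucially, $c$ is independent of $\lambda$. This produces a bundle morphism $\alpha$ from $\Vv$ to the trivial bundle with fibre $H^0(\cO_X(n-3))$, fibrewise linear between spaces of equal dimension $n-2$. The heart of the argument is to show that $\alpha$ is a fibrewise isomorphism exactly when $\rho\neq 0$: an element of $\ker\alpha$ has $c\equiv 0$, so $\cO_X\subset E$ is $\nabla$-invariant, and since $l_i\not\subset\cO_X$ the induced $\lambda$-connection on $\cO_X$ has residues $\lambda\nu_i^-$; the residue theorem on the degree-zero bundle $\cO_X$ then forces $\lambda\sum_i\nu_i^-=0$, i.e. $\lambda\rho=0$, so $\lambda=0$, after which the parabolic normalization $l_i\not\subset\cO_X$ excludes the remaining nilpotent Higgs fields and gives $\ker\alpha=0$. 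Granting this, $\App\times\Bun$ is a morphism of $\pp^{n-3}$-bundles over $\pp_\bb^{n-3}$ inducing an isomorphism on each fibre, hence an isomorphism onto $\pp_\ba^{n-3}\times\pp_\bb^{n-3}$.

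Finally I would locate the Higgs locus. Inside each fibre $\pp(\Vv_\bb)$ the divisor $\{\lambda=0\}$ is the hyperplane $\pp(\Vv_\bb^0)$ of strongly parabolic Higgs fields, and its image under the isomorphism $\alpha$ is a hyperplane $H_\bb\subset\pp_\ba^{n-3}$. The decisive computation is to evaluate, by taking residues at the poles $t_i$, the pairing between the apparent entry $c$ (coordinates $\ba$) and the parabolic data of $(E,\bl)$ (coordinates $\bb$ from Proposition \ref{Prop:OurMainChart}), and to check that this residue pairing equals $\lambda$ up to the nonzero factor $\rho$; this is exactly the origin of the symplectic potential $\eta=\rho\,(a_0\,db_0+\cdots+a_{n-3}\,db_{n-3})/(a_0b_0+\cdots+a_{n-3}b_{n-3})$. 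It follows that $\{\lambda=0\}=\{\sum_k a_kb_k=0\}$, so that $\App\times\Bun$ carries $M^{\bw}(\bt,\bnu)^0_H$ isomorphically onto $\Sigma$ and its complement $\{\lambda\neq 0\}$ isomorphically onto $(\pp_\ba^{n-3}\times\pp_\bb^{n-3})\setminus\Sigma$, recovering Theorem \ref{MainTheoremIntro}.

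I expect the main obstacle to be the two linked computations of the second and third paragraphs: proving that $\alpha$ is nondegenerate precisely for $\rho\neq 0$, and identifying the image of the Higgs hyperplane with the incidence relation $\sum_k a_kb_k=0$. Both rest on an explicit residue analysis at the poles tying together the connection-matrix entries, the prescribed spectral data, and the coordinates on $P_{-1}^{\bw}(\bt)$ supplied by Proposition \ref{Prop:OurMainChart}; the bookkeeping of the democratic normalization $l_i\not\subset\cO_X$ and of the residue theorem on $\cO_X$ is exactly where the hypothesis $\rho=-\sum_i\nu_i^-\neq 0$ enters, and where $\rho=0$ would instead collapse the image onto $\Sigma$.
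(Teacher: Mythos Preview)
Your approach is correct and rests on the same two pillars as the paper's proof (the residue theorem on $\cO_X$ to force $\lambda\rho=0$, and the Serre duality pairing to identify the Higgs hyperplane with $\Sigma$), but the packaging is different. The paper works entirely in \v{C}ech cocycles: it presents $(E,\bl)$ as an extension class $(b_{kl}a_l)\in H^1(X,L^{-1}(-D))$, writes a $\lambda$-connection as $\lambda d+A_k$ with entries $(\alpha_k,\beta_k,\gamma_k,\delta_k)$, and then, given $\gamma\in H^0(X,L\otimes\Omega_X^1(D))$ and the extension class, \emph{constructs the inverse} by solving for the remaining entries one at a time. The decisive step is solving for $\alpha_k$: one must kill the class $(b_{kl}a_l)\gamma-\lambda(\alpha_k^0-\alpha_l^0)$ in $H^1(X,\Omega_X^1)\simeq\cc$, and since $\sum_i\nu_i^-\neq0$ makes $(\alpha_k^0-\alpha_l^0)$ a generator of that one-dimensional group, $\lambda$ is uniquely determined. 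Your argument replaces this explicit inverse by the kernel computation for the forward linear map $\alpha:\Vv_\bb\to H^0(\cO_X(n-3))$, which is cleaner and avoids the cocycle bookkeeping; in exchange, the paper's route makes the Serre pairing completely transparent (the image of $(\gamma,(b_{kl}a_l))$ under $H^0(X,L\otimes\Omega_X^1(D))\times H^1(X,L^{-1}(-D))\to H^1(X,\Omega_X^1)\to\cc$ is literally computed as $\lambda\sum_i\nu_i^-$), so the identification of $\{\lambda=0\}$ with $\Sigma$ falls out without the separate residue computation you flag as the main obstacle. Either organization works; yours is arguably more conceptual, the paper's more self-contained.
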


Here we note that the coarse moduli space of $\bw$-stable $\lambda$-$\bnu$ parabolic connections 
without the condition in (\ref{eq:intro-compactification}) have singularities.  So 
our choice of the weight $\bw$ and the compactification in (\ref{eq:intro-compactification}) 
is essential to prove Theorem \ref{thm:MainMainThmIntro}.

It is well-known \cite{ArinkinLysenko1,ArinkinLysenko2} that the moduli space $M^{\bw}(\bt,\boldsymbol{\nu})$
should be an affine extension of the cotangent bundle of $P_{-1}(\bt)$; once we have 
choosen a projective chart $P_{-1}^{\bw}(\bt)\simeq\mathbb P_{\mathbf{b}}^{n-3}$, the restricted
affine bundle $M^{\bw}(\bt,\boldsymbol{\nu})^0$ must be either the cotangent bundle $T^*\mathbb P_{\mathbf{b}}^{n-3}$,
or the unique non trivial affine extension of $T^*\mathbb P_{\mathbf{b}}^{n-3}$. 
Here, we prove that we are in the latter case if, and only if, $\rho\not=0$.
But the nice fact is that apparent map provides a natural trivialization for the compactification
of this affine bundle.

The projective space $\vert\mathcal O_{X}(n-3)\vert\simeq\mathbb P_{\mathbf{a}}^{n-3}$
may be considered as the space of polynomial equations $\sum_ka_kz^k=0$. We can consider
the $(n-3)!$-fold cover $(\pc)^{n-3}\to (\pc)^{(n-3)}:= (\pc)^{n-3}/{\mathfrak S}_{n-3}=\mathbb P_{\mathbf{a}}^{n-3}$ 
parametrized by ordered roots $(q_1:\cdots:q_{n-3})$. 
Since we have a morphism $\mathrm{App}:M(\bt, \bnu)\dashrightarrow \mathbb P_{\mathbf{a}}^{n-3}$, 
by the fibered product,
we get a $(n-3)!$-fold cover $\widetilde{M^{\bw}(\bt,\bnu)^0}\lra M^{\bw} (\bt,\bnu)^0$. 
This latter one (or some natural partial compactification $\widetilde{M^{\bw} (\bt,\bnu)}$) has been described
in many papers \cite{Okamoto,DubrovinMazzocco,Suzuki, SaitoSzabo}. The space $\widetilde{M(\bt,\bnu)}$
can be covered by affine charts $\cc^{2n-6}$ with Darboux coordinates $(p_k,q_k)$  
for the symplectic structure: $\omega=\sum_k dp_k\wedge dq_k$.
Our parameters can be expressed in terms of symmetric functions of $p_k$'s and $q_k$'s,
what we do explicitely in the five pole case $n=5$ at the end of the paper.
From this point of view, S. Oblezin constructs in \cite{Oblezin} a natural birational map 
$M(\bt,\bnu)\dashrightarrow (K_n)^{(n-3)}$ where $K_n$ is an open subset
of the total space of $\Omega^1_X(D)$ blown up at $2n$ points.
Precisely, at each fiber $z=t_i$, we have the residual map
$$\Omega^1_X(D)_{\vert t_i}\stackrel{\mathrm{Res}_{t_i}}{\lra}\cc$$
and we blow-up the two points corresponding to $\nu_i^+,\nu_i^-\in\cc$ in the fiber;
then we delete the strict transforms of fibers $z=t_i$ to obtain the open set $K_n$. 
So far, no natural system of coordinates was provided on $M^{\bw}(\bt,\bnu)$ for $n\ge5$.

In \cite{Arinkin}, 
Arinkin investigated the geometric Langlands problem related to the case of $n=4$ 
by using the natural morphism of coarse modui spaces $\Bun:M(\bt, \bnu) \lra P_{-1}(\bt)$ (see also \cite{ArinkinFedorov}).
Though we cannot directly extend his methods to the case of $n > 4$, we  may expect that our main Theorem \ref{MainTheoremIntro} and \ref{thm:MainMainThmIntro} may give some approach to obtain a similar result.

In the last part of the paper, we investigate with many details the case $n=5$. 
We provide a precise description of the non-separated coarse moduli space
$P_{-1}(\bt)$ of undecomposable parabolic bundles, which turns to be closely related 
to the geometry of degree $4$ Del Pezzo surfaces. Precisely,
there is a natural embedding $X\hookrightarrow V:=\mathbb P^2_\bb$ as a conic.
We then consider the blow-up $\phi:\hat V\to V$
of the images of the $5$ points $t_1,\ldots,t_5\in X$: this is the Del Pezzo surface
associated to our problem. If we
denote by $\Pi_i\subset \hat V$ the exceptional divisor over $t_i$, and by $\Pi_{i,j}\in\hat V$
the strict transform of the line in $V$ passing through $t_i$ and $t_j$ for any $i,j$,
then these $\Pi,\Pi_i,\Pi_{i,j}$ are 
the well-known $16$ rational curves with self-intersection $(-1)$ in $\hat V$. 
For any $i=1,\ldots,5$, by contracting all five $(-1)$-curves intersecting $\Pi_i$,
we get a new morphism $\phi_i:\hat V\to V_i\simeq\mathbb P^2_\cc$.

\begin{theorem}\label{prop:patching5}The coarse moduli space $P_{-1}(\bt)$ is given by:
$$
P_{-1}(\bt)=\hat V \cup V\cup V_1\cup V_2\cup V_3\cup V_4\cup V_5
$$
where $\cup$ means that we are patching these projective manifolds 
by means of the birational maps $\phi$, $\phi_i$ and $\phi_i\circ\phi_j^{-1}$
along the maximal open subsets where they are one-to-one.
\end{theorem}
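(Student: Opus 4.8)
The plan is to realise each of the seven surfaces as one of the projective GIT charts $P_{-1}^{\bw}(\bt)$ occurring in the covering (\ref{eq:patchintro}), and then to identify the transition maps between charts with $\phi$, $\phi_i$ and $\phi_i\circ\phi_j^{-1}$. By Grothendieck's theorem the underlying bundle of a degree $-1$ rank two bundle on $X=\pc$ has the form $\cO(a)\oplus\cO(-1-a)$ with $a\ge 0$; a direct count shows that for $n=5$ only $a=0$ and $a=1$ occur in an undecomposable parabolic bundle, and the jump type $\cO(1)\oplus\cO(-2)$ contributes only finitely many points (its automorphism group modulo scalars has dimension equal to the number of marked points). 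On the open stratum $E=\cO\oplus\cO(-1)$ I would parametrise the parabolic direction $l_i\subset E_{|t_i}$ by an affine coordinate $x_i$, with $x_i=\infty$ the fibre of the distinguished subbundle $\cO\subset E$, and note that the reduced automorphism group $\cc^{*}\times\cc^{2}$ acts by $x_i\mapsto a\,x_i+f_0+f_1t_i$.

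The democratic chart is pinned down next. When no $l_i$ lies on $\cO$, the five finite values are interpolated by a unique polynomial $P$ of degree $\le 4$ with $P(t_i)=x_i$, and the group acts by $P\mapsto aP+(f_0+f_1z)$; the invariant is the projective class of the three top coefficients of $P$, which gives a canonical isomorphism with $\pp^{2}_{\bb}=|\cO_X(2)|^{*}=V$ compatible with Proposition (\ref{Prop:OurMainChart}) and the apparent-map duality. I would then read off the special loci in this chart. Letting $l_i\to\cO_{|t_i}$, i.e. $x_i\to\infty$, forces $P\to x_iL_i$ with $L_i$ the $i$-th Lagrange polynomial, so the invariant tends to a single point $\phi(t_i)$ on the conic $X\hookrightarrow V$ while the rate of approach sweeps out a full $\pp^{1}$; this is the point to be blown up and its exceptional curve $\Pi_i$. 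The condition that three parabolics $l_i,l_j,l_k$ lie on a common subbundle $\cO(-1)\hookrightarrow E$ is, through the collinearity determinant, linear in the top coefficients, hence a line in $V$; evaluating at the limits shows it passes through $\phi(t_l)$ and $\phi(t_m)$ for the complementary pair, so it is exactly $\Pi_{l,m}$. Together with the conic $\Pi=X$ through the five points, these reproduce the sixteen $(-1)$-curves.

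Then I would run the wall-and-chamber analysis in $[0,1]^{5}$. A subbundle $\cO(-1)\hookrightarrow E$ carrying the parabolics indexed by a set $S$ destabilises $E$ precisely when $\sum_{i\in S}w_i-\sum_{i\notin S}w_i>1$, while the subbundle $\cO$ carrying the parabolics indexed by $T$ destabilises precisely when $\sum_{i\in T}w_i>\tfrac12(-1+\sum_iw_i)$. Reading these inequalities chamber by chamber identifies the charts: for $\tfrac15<w<\tfrac13$ (democratic) the stable objects are exactly the $E=\cO\oplus\cO(-1)$ with no $l_i\subset\cO$, giving $V$; for a balanced interior weight such as $w\equiv\tfrac12$ each single condition $l_i\subset\cO$ survives as a one-parameter family (two such conditions destabilise), producing the five disjoint exceptional curves $\Pi_i$ and hence the Del Pezzo chart $\hat V$; and for a weight with $w_i$ small and the remaining weights large every $\Pi_j$ survives while the conic $\Pi$ and the four lines $\Pi_{i,j}$ $(j\ne i)$ are destabilised, producing $V_i$. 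Comparing the destabilised loci across the wall $w=\tfrac13$ shows the five curves $\Pi_i$ of $\hat V$ collapse by $S$-equivalence to the points $\phi(t_i)$: this is the blow-down $\phi\colon\hat V\to V$. Comparing across the corresponding asymmetric wall contracts $\Pi$ together with the four $\Pi_{i,j}$, which is $\phi_i\colon\hat V\to V_i$.

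It remains to check that these seven charts exhaust $P_{-1}(\bt)$ and that the transition maps are precisely $\phi$, $\phi_i$ and $\phi_i\circ\phi_j^{-1}$ along their maximal loci of bijectivity. Exhaustion follows from Proposition (\ref{prop:undecomposable=stable}): every undecomposable parabolic bundle is $\bw$-stable for some weight, and the inequalities above show that any such bundle already appears in one of the listed chambers. Each pair of charts shares the open stratum of bundles stable in both chambers, on which the relevant surfaces are canonically isomorphic; these isomorphisms are the restrictions of $\phi$, $\phi_i$ and $\phi_i\circ\phi_j^{-1}$ to their loci of bijectivity, which gives the asserted patching. The main obstacle I anticipate is the wall-crossing bookkeeping: one must verify that the moduli-theoretic $S$-equivalence contraction along each wall agrees numerically with the prescribed blow-down of $(-1)$-curves on the degree-$4$ Del Pezzo surface, and in particular that the non-separatedness of $P_{-1}(\bt)$ is produced exactly by the disagreement of the seven charts over these sixteen curves, so that the patching reconstructs the full line configuration of $\hat V$.
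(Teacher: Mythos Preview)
Your proposal is correct and follows the same overall strategy as the paper: realise each surface as a moduli space $P_{-1}^{\bw}(\bt)$ for a specific chamber of weights, identify the wall-crossing contractions with the blow-downs $\phi,\phi_i$, and check exhaustion. The weight choices you give (democratic $\tfrac15<w<\tfrac13$ for $V$, democratic near $\tfrac12$ for $\hat V$, and $w_i$ small with the other four large for $V_i$) agree with the paper's, and your stability inequalities are correct.

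There are two genuine differences in execution worth noting. First, your Lagrange-interpolation model of $V$ (taking the projective class of the top three coefficients of the degree-$4$ interpolant) is a concrete alternative to the paper's identification $V\simeq\mathbb{P}H^0(X,L\otimes\Omega_X^1(D))^*$ via Serre duality; both give the same $\pp^2$, but the paper's version is what links $\mathrm{Bun}$ to the dual of the apparent map. Second, and more substantially, the paper organises the five charts $V_i$ via the order-$16$ automorphism group of $\hat V$ generated by even products of elementary transformations $\mathrm{Elm}_{t_i}^{\pm}$: this group acts simply transitively on the sixteen $(-1)$-curves and carries $V$ to each $V_i$, so one only has to analyse one asymmetric chamber and transport the result. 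You instead analyse each $V_i$ by hand. Both routes work; the symmetry argument is shorter and explains why the configuration is so uniform.

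One point to tighten: your exhaustion step appeals to Proposition~\ref{prop:undecomposable=stable}, but that proposition only says an undecomposable bundle is stable for \emph{some} weight, not necessarily one in your seven chambers. The paper closes this by explicitly listing, for each of the sixteen special loci $D,D_i,D_{i,j},\Pi,\Pi_i,\Pi_{i,j}$, which of the charts $\hat V,V,V_1,\ldots,V_5$ contain it. You acknowledge this bookkeeping is the residual obstacle; carrying it out (it is a short case check using your inequalities) completes the argument.
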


For instance, $\phi:\hat V\to V$ induces an isomorphism
$$\hat V\setminus (\Pi_1\cup \Pi_2\cup \Pi_3\cup \Pi_4\cup \Pi_5)\stackrel{\sim}{\lra} V\setminus\{t_1,t_2,t_3,t_4,t_5\}$$
and we patch $V$ to $\hat V$ along these open subsets by means of this isomorphism. 
Moreover, all these projective charts $\hat V,V,V_i$ are realized as coarse moduli spaces
of stable parabolic bundles $P_{-1}^{\bw}(\bt)$ for convenient choices of weights $\bw$
and, in the patching, we just identify all isomorphism classes of bundles that are shared 
by any two of these projective charts.
Finally, we explain how to recover the total moduli space $M(\bt,\boldsymbol{\nu})$
by blowing-up $\vert\mathcal O_{X}(n-3)\vert\times \vert\mathcal O_{X}(n-3)\vert^{*}$ 
along some lifting of these curves inside the incidence variety $\Sigma$.

{\bf Acknowledgement:} The authors would like to warmly thank Michele Bolognesi for useful discussions about moduli spaces of $n$-points configurations on the projective line.

\section{Moduli space of connections}\label{Sect:ModuliConnection}

In this section, 
we will recall some results in \cite{Nitsure,MehtaSeshadri,Seshadri,MaruyamaYokogawa,InabaIwasakiSaito,Inaba}. 

\subsection{Definition of the moduli space (as geometric quotient)}

Let us fix a set of $n$-distinct points $\bt=\{ t_1, \cdots, t_n \}$ on the Riemann sphere $X:=\pp^1_{\cc}$
and define the divisor $D= t_1+ \cdots+ t_n$. In this paper, a logarithmic connection of rank $2$ on $X$ 
 with singularities (or poles) at $D$ is a pair $(E, \nabla)$ consisting of  an algebraic (or holomorphic) 
vector bundle $E$ on $\pc$ of rank $2$ and  a linear algebraic connection  
$\nabla:E \lra E \otimes \Omega^1_{X}(D)$.  
We can define the residue homomorphism 
$\res_{t_i}(\nabla) \in \End(E_{|t_i}) \simeq M_2(\cc)$ and then 
let $\nu_i^+, \nu_i^-$ be the eigenvalues of $\res_{t_i}(\nabla)$, that we call {\em local exponents}. 
Fuchs relation says that $\sum_i (\nu_i^++\nu_i^-) = - \deg E =-d$. 
So we define the set of local exponents of degree $d$
\begin{equation}\label{eq:exponent}
\Nn_{n}(d) := \left\{ \bnu = (\nu_i^{\pm})_{1\leq i \leq n} \in \cc^{2n} \quad  \left| 
 \quad d + \sum_{1\leq i \leq n}(\nu_i^++\nu_i^-) = 0 \right\} \simeq 
 \cc^{2n-1} \right.
\end{equation}

\begin{definition}\label{def:ParabolicConnection}
 Fix $\bnu \in \Nn_{n}(d)$.  A $\bnu$-parabolic connection 
 on $(X,D)=(\pc, \bt) $ is a collection $(E, \nabla, \bl=\{ l_i \} )$ consisting of the following data:
 \begin{enumerate}
 \item a logarithmic connection $(E, \nabla)$ on $(X,D)$ of rank $2$ with spectral data $\bnu$,  
 \item a one dimensional subspace $l_i\subset E_{|t_i}$ 
on which $\res_{t_i}(\nabla)$ acts by multiplication by $\nu_i^+$.
 \end{enumerate}
\end{definition}
For generic $\bnu$, the parabolic direction $l_i$ is nothing but the eigenspace for $\res_{t_i}(\nabla)$ 
with respect to $\nu_i^+$
so that the parabolic data is uniquely defined by the connection $(E,\nabla)$ itself. However,
when $\nu_i^+=\nu_i^-$ and $\res_{t_i}(\nabla)$ is scalar (i.e. diagonal), then 
the parabolic $l_i$ add a non trivial data and this allow to avoid singularities 
of the moduli space.

In order to obtain a good moduli space, 
we have to introduce a stability condition for parabolic connections.  
For this, fix weights $\bw=(w_1,\ldots,w_n)\in[0,1]^n$. Then 
for any line subbundle $F\subset E$, define the $\bw$-stability index 
of $F$ to be the real number
\begin{equation}\label{def:StabIndex}
\mathrm{Stab}(F):= \deg(E)- 2 \deg (F) 
+ \sum_{l_i \not= F_{|t_i}}w_i-\sum_{l_i=F_{|t_i}} w_i.
\end{equation}

\begin{definition}\label{def:stable}
A $\bnu$-parabolic connection $(E,\nabla, \bl )$ will be called {\em $\bw$-stable} 
(resp.  {\em $\bw$-semistable}) if
for any rank one $\nabla$-invariant subbundle $F \subset E$,  
$$\nabla(F)\subset F\otimes\Omega_{X}^1(D)$$
the following inequality holds
\begin{equation}\label{eq:stability}
 \mathrm{Stab}(F)>0 \ \ \ (\text{resp.}\ \ge0).
\end{equation}
\end{definition}

A rank $2$ parabolic bundle $(E,\bl)$ is called {\em $\bw$-stable} (resp.  {\em $\bw$-semistable})
if inequality (\ref{eq:stability}) holds for any rank one subbundle $F\subset E$. In particular,
a $\bnu$-parabolic connection $(E,\nabla, \bl )$ may be stable while the underlying
parabolic bundle $(E,\nabla)$ is not. For instance, if  $(E,\nabla, \bl )$ is irreducible,
there is no strict $\nabla$-invariant subbundle and condition
(\ref{eq:stability}) is just empty.

\begin{remark}{\rm To fit into usual notations (see \cite{MehtaSeshadri,Seshadri,MaruyamaYokogawa,Inaba,InabaIwasakiSaito}), one should rather consider the flag
$ \{l^{(i)}_0  \supset l^{(i)}_1  \supset l^{(i)}_2\}:=\{E_{|t_i}\supset l_i\supset \{0\}\}$ and ask 
that $(\res_{t_i}(\nabla)-\nu^{(i)}_j Id)(l^{(i)}_j) = l^{(i)}_{j+1}$ for each $j=0, 1$,
where $(\nu^{(i)}_0,\nu^{(i)}_1):=(\nu_i^-,\nu_i^+)$. In the rank $2$ case, this is 
equivalent to Definition \ref{def:ParabolicConnection}.
Then, weights rather look like:
$$
\balpha=(\alpha^{(1)}_1,\alpha^{(1)}_2,\ldots,\alpha^{(n)}_1,\alpha^{(n)}_2) 
\in \mathbb R^{2n}
$$
satisfying $ \alpha^{(i)}_1 \leq \alpha^{(i)}_2 \leq \alpha^{(i)}_1+1 $. 
Then, for any nonzero $\nabla$-invariant subbundle $F\subset E$, we define integers 
$$\len (F)^{(i)}_j = \dim (F|_{t_i}\cap l^{(i)}_{j-1})/(F|_{t_i}\cap l^{(i)}_j),$$
and stability is defined by the inequality
$$ \deg F + \sum_{i=1}^n \sum_{j=1}^2
 \alpha^{(i)}_j \len(F)^{(i)}_j
 < \frac{\deg E + \sum_{i=1}^n \sum_{j=1}^2 \alpha^{(i)}_j 
 \len(E)^{(i)}_j}{2}.
$$
However, it is straightforward to check that this condition is equivalent to (\ref{eq:stability})
after setting $w_i=\alpha^{(i)}_2-\alpha^{(i)}_1$. Sometimes, a parabolic degree is defined by
$$
\deg^{\mathrm{par}}F:= \deg F + \sum_{i=1}^n \sum_{j=1}^2
 \alpha^{(i)}_j \len(F)^{(i)}_j =  
\deg(F)+ \sum_{i=1}^n \alpha^{(i)}_{\epsilon_i}
$$
(including the case $F=E$) and in this case, the stability index for a line bundle is given by
$\mathrm{Stab}(F):= \deg^{\mathrm{par}}E-2\deg^{\mathrm{par}}F$. We also note
that, in \cite{InabaIwasakiSaito}, it was assumed that $0 < \alpha^{(i)}_2 < \alpha^{(i)}_1 < 1$ 
and genericity conditions to obtain the smooth moduli space for all $\bnu \in \Nn_n(d)$ 
simultaneously.  In this paper, we will vary the weights $\balpha$ and may consider the case of 
the equality $\alpha^{(i)}_1 = \alpha^{(i)}_2 $ for some $i$. Note also that in \cite{LoraySaitoSimpson}, 
we use the minus sign in front of the weight $\alpha^{(i)}_j$.}
\end{remark}

Consider the line bundle $L:=\mathcal O_{X}(d)$ and denote by 
$\nabla_{L}:L \lra L \otimes \Omega^1_{X}(D)$
the unique logarithmic connection having residual eigenvalue $\nu_i^++\nu_i^-$
at each pole $t_i$. For any $\bnu$-parabolic connection $(E,\nabla,  \bl)$ like above, 
there exists an isomorphism $\varphi: \wedge^2 E \lra L$; it is unique up to a scalar and 
automatically conjugates the trace
connection $\mathrm{tr}(\nabla)$ with $\nabla_{L}$.
We must add a choice of such isomorphism in the data $(E,\nabla, \varphi, \bl)$
in order to kill-out automorphisms; this is needed in the construction of the moduli space. 
We omit this data $\varphi$ in the sequel for simplicity.

Define the moduli space 
$M^{\bw}(\bt,\bnu)$
of isomorphism classes of $\bw$-stable $\bnu$-parabolic connections 
$(E,\nabla,\bl)$.  
Set $T_n = \{ \bt=(t_1, \cdots, t_n) \in X^n\ ;\ t_i \not= t_j\ \text{for}\ i \not= j \}$. 
Considering the relative setting of moduli space over $T_n \times \Nn_n(d)$, we obtain a family of moduli space
$$
\pi_n: M^{\bw} \lra T_n \times \Nn_n(d), 
$$ 
such that
$
M^{\bw}(\bt,\bnu) = \pi_n^{-1}(\bt, \bnu). 
$

\begin{theorem}\label{thm:fund} {\rm ([Theorem 2.1, Proposition 6.1,  \cite{InabaIwasakiSaito}]).} Assume that $n > 3$.  
For a generic  weight $\bw$, we can construct a 
relative fine moduli space 
$$\pi_n:M^{\bw}  \lra T_n \times \Nn_n(d),$$
which is a {\em smooth, quasi-projective} morphism of relative dimension $2n-6$ with irreducible closed fibers.  Therefore, the moduli space 
$M^{\bw}(\bt, \bnu)$ is a {\em smooth, irreducible} quasi-projective algebraic 
variety of dimension $2n -6$  for all 
$(\bt, \bnu) \in T_n \times \Nn_n(d)$.  Moreover the moduli space $M^{\bw}(\bt, \bnu)$ admits a natural holomorphic symplectic structure.  
\end{theorem}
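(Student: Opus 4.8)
The plan is to realise $M^{\bw}$ as a geometric quotient by Geometric Invariant Theory and then to extract smoothness, the dimension, the irreducibility of the fibers and the symplectic form from the deformation theory of a single parabolic connection.

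\emph{Construction as a GIT quotient.} First I would prove boundedness of the family of $\bw$-stable $\bnu$-parabolic connections with fixed numerical invariants. The key point is that the underlying bundle cannot be too unbalanced: a line subbundle $F \subset E$ of sufficiently negative degree is automatically $\nabla$-invariant (for $\deg F \ll 0$ there is no nonzero $\mathcal O_X$-linear map $F \to (E/F)\otimes\Omega^1_X(D)$, so $\nabla$ must preserve $F$) and would then violate the stability inequality (\ref{eq:stability}); hence the $E$ range in a bounded family. Twisting by $\mathcal O_X(m)$ with $m \gg 0$ makes every $E$ globally generated with vanishing $H^1$, so I can rigidify by a surjection $\mathcal O_X(-m)^{\oplus N} \twoheadrightarrow E$ and parametrize bundles by a locally closed subscheme of a Quot scheme $\Quot$. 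Over it, the choice of a logarithmic connection with prescribed residual eigenvalues $\bnu$ together with compatible parabolic lines $l_i$ forms a scheme $R$ which is affine over the flag variety of the $l_i$, since the connections with fixed residues form a torsor under $H^0(X, \End(E)\otimes\Omega^1_X(D))$. For a suitable $GL_N$-linearization, the Hilbert--Mumford numerical criterion on $R$ translates exactly into the parabolic $\bw$-(semi)stability of Definition \ref{def:stable}; this comparison is the technical core. For generic $\bw$ one has semistable $=$ stable, the stabilizers become trivial after adjoining the determinant rigidification $\varphi : \wedge^2 E \xrightarrow{\sim} L$, and the geometric quotient $R^{s}/GL_N$ is a fine moduli space carrying a universal family. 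Performing the construction in families over $T_n \times \Nn_n(d)$ produces the morphism $\pi_n$.

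\emph{Smoothness and dimension.} The infinitesimal theory of $(E,\nabla,\bl)$ with fixed $\bnu$ and fixed determinant is governed by the hypercohomology of the two-term complex
\[
\Cc^{\bullet}:\quad \mathrm{ParEnd}(E) \xrightarrow{\ [\nabla,-]\ } \mathrm{SParEnd}(E)\otimes\Omega^1_X(D),
\]
in degrees $0$ and $1$, where $\mathrm{ParEnd}$ denotes the trace-free endomorphisms preserving every line $l_i$ and $\mathrm{SParEnd}$ the strongly parabolic ones (nilpotent with respect to the flag at each $t_i$). First-order deformations are $\mathbb H^1(\Cc^\bullet)$ and obstructions lie in $\mathbb H^2(\Cc^\bullet)$. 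I would prove $\mathbb H^2(\Cc^\bullet)=0$ via Serre duality: the trace pairing identifies $\mathrm{ParEnd}$ with the dual of $\mathrm{SParEnd}\otimes\Omega^1_X(D)$ relative to $\Omega^1_X$ (the poles cancel because the strongly parabolic part is nilpotent at the residues), so $\Cc^\bullet$ is self-dual up to the canonical twist and $\mathbb H^2(\Cc^\bullet) \cong \mathbb H^0(\Cc^\bullet)^{*}$; but $\mathbb H^0(\Cc^\bullet)$ is the space of trace-free, parabolic, $\nabla$-horizontal endomorphisms, which vanishes by $\bw$-stability. Hence $M^{\bw}$ is smooth and, since $X = \pp^1_\cc$ kills $H^2$ of sheaves, Riemann--Roch for $\chi(\Cc^\bullet)$ yields $\dim \mathbb H^1(\Cc^\bullet) = 2n-6$, the relative dimension of $\pi_n$.

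\emph{Irreducibility and the symplectic structure.} For irreducibility of the closed fibers I would use the dense open locus on which $E \simeq \mathcal O_X \oplus \mathcal O_X(-1)$: over it the remaining data form an irreducible (affine-bundle) family, exactly the picture later refined by the apparent map, and smoothness together with connectedness promote this to irreducibility of the whole fiber. The symplectic form is the cup product followed by trace, namely
\[
\mathbb H^1(\Cc^\bullet) \otimes \mathbb H^1(\Cc^\bullet) \lra \mathbb H^2\big(\mathcal O_X \to \Omega^1_X\big) \cong H^1(X,\Omega^1_X)\cong \cc,
\]
whose nondegeneracy is precisely the self-duality of $\Cc^\bullet$ used above, and whose closedness follows from the general algebraic closedness of such cup-product forms (equivalently $\omega = d\eta$ once Darboux charts $(p_k,q_k)$ are in place). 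The step I expect to be the main obstacle is the GIT stability comparison in the construction: matching the Hilbert--Mumford criterion on $R$ with the parabolic inequality (\ref{eq:stability}) uniformly in $\bw$ and $\bnu$, together with the delicate bookkeeping of the parabolic data at the poles that is also what makes the Serre-duality vanishing $\mathbb H^2=0$ work.
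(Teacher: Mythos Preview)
The paper does not give its own proof of this theorem: it is stated as a quotation of Theorem~2.1 and Proposition~6.1 of \cite{InabaIwasakiSaito}, with no accompanying argument. Your outline is essentially a sketch of what that reference does (GIT construction via a Quot-type parameter scheme, vanishing of obstructions through Serre self-duality of the two-term complex $\mathrm{ParEnd}(E)\to\mathrm{SParEnd}(E)\otimes\Omega^1_X(D)$, and the trace-pairing symplectic form), so at the level of strategy you are aligned with the cited source rather than with anything in the present paper.

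There is, however, a genuine slip in your boundedness step. You claim that a line subbundle $F\subset E$ with $\deg F\ll 0$ has no nonzero $\mathcal O_X$-linear map $F\to (E/F)\otimes\Omega^1_X(D)$. The degree of $F^{\vee}\otimes(E/F)\otimes\Omega^1_X(D)$ is $d-2\deg F+n-2$, which is \emph{large and positive} when $\deg F\ll 0$, so on $\pp^1_\cc$ there are many such maps. The argument goes the other way: it is the \emph{maximal} destabilizing subbundle $F=\mathcal O_X(d_2)$, with $d_2$ large, for which this degree becomes negative, forcing the second fundamental form to vanish and hence $F$ to be $\nabla$-invariant; the stability inequality (\ref{eq:stability}) then bounds $d_2$ from above. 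With this direction reversed, the rest of your sketch (Quot rigidification, Hilbert--Mumford comparison, $\mathbb H^2(\Cc^\bullet)\cong\mathbb H^0(\Cc^\bullet)^*=0$, Riemann--Roch giving $2n-6$, and the cup-product pairing) is the standard and correct line of argument.
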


\subsection{Isomorphisms between moduli spaces: twist and elementary transformations}\label{sec:twist&elm}
Given a connection $(F,\nabla_F)$ of rank 1, 
$$\nabla_F :F\to F\otimes \Omega_X^1(D)$$
with local exponents $(\mu_1,\ldots,\mu_n)$, we can define the {\em twisting map}
$$
\otimes(F,\nabla_F): \left\{\begin{matrix}
 M^{\bw}(\bt,\bnu)&\to&M^{\bw}(\bt,\bnu')  \\
  (E, \nabla,\bl)&\mapsto& (E\otimes F, \nabla\otimes\nabla_F, \bl)
\end{matrix}\right.
$$
where $\boldsymbol{\nu}'=(\nu_i^{\pm}+\mu_i)$. It is an isomorphism. It follows that
our moduli space only depend on differences $\nu_i^+-\nu_i^-$. 
On the other hand, this allow to rather freely modify
the trace connection. Precisely, depending on the parity of the degree $d$,
we can go into one of the following two cases
\begin{itemize}
\item in the even case, $(L,\nabla_L)=(\cO_X,d)$ and $(\nu_i^+,\nu_i^-)=(\frac{\kappa_i}{2},-\frac{\kappa_i}{2})$;
\item in the odd case, $(L,\nabla_L)=(\cO_X(-t_n),d+\frac{dz}{z-t_n})$ and 

$(\nu_i^+,\nu_i^-)=(\frac{\kappa_i}{2},-\frac{\kappa_i}{2})$
except $(\nu_n^+,\nu_n^-)=(\frac{\kappa_n}{2}+\frac{1}{2},-\frac{\kappa_n}{2}+\frac{1}{2})$;
\end{itemize}
where $(L,\nabla_L)$ is the fixed trace connection as above.



For each $i=1,\ldots,n$, we can define the {\em elementary transformation}
$$\mathrm{Elm}_{t_i}^-:\left\{\begin{matrix}
M^{\bw} (\bt,\bnu)&\to& M^{\bw'}(\bt, \bnu')\\
(E, \nabla,\bl)&\mapsto&(E', \nabla', \bl')
\end{matrix}\right.
$$
The vector bundle $E'$ is defined by the exact sequence of sheaves
$$0\lra  E'\lra  E\lra  E/l_i\lra 0$$ 
where $l_i$ is viewed here as a sky-scrapper sheaf.
The parabolic direction $l_i'$ is therefore defined as the kernel of the natural morphism
$E'\lra E$. The new connection $\nabla'$ is deduced from the action of $\nabla$
on the subsheaf $ E'\subset E$ and, over $t_i$, eigenvalues are changed by
$$(\nu_i^+,\nu_i^-)'=(\nu_i^-+1,\nu_i^+)\ \ \ (\text{and other}\ \nu_j^{\pm}\ \text{are left unchanged for}\ j \not= i).$$
If a line bundle $F\subset E$ contains the parabolic $l_i$, it is left unchanged
and we get $F\subset E'$; on the other hand, when $l_i\not=F_{|t_i}$ then 
we get $F'\subset E'$ with $F'=F\otimes\cO_X(-t_i)$. It follows that stability
condition is preserved for $\bw'$ defined by 
$$w_i'=1-w_i\ \ \ (\text{and other}\ w_j\ \text{are left unchanged for}\ j \not= i).$$
The composition $\mathrm{Elm}_{t_i}^-\circ\mathrm{Elm}_{t_i}^-$ is just the twisting map 
by $(F,\nabla_F)=(\mathcal O_{X}(-t_i),d+\frac{dz}{z-t_n})$.
We may also define $\mathrm{Elm}_{t_i}^+$ as the inverse of $\mathrm{Elm}_{t_i}^-$:
$$\mathrm{Elm}_{t_i}^+=\mathrm{Elm}_{t_i}^-\otimes\left(\mathcal O_{X}(t_i),d-\frac{dz}{z-t_i}\right).$$


Although we are mainly interested in the degree $d=-1$ case where the two Lagrangian fibrations
naturally occur, we will also consider the degree $0$ case of $\mathfrak{sl}_2$-connections
to compare with \cite{ArinkinLysenko1,ArinkinLysenko2,Oblezin}. Explicit computations 
will be made by means of 
$$\mathrm{Elm}_{t_n}^-:M(\bt,\bnu)\to M(\bt,\bnu')$$
going from degree $0$ to degree $-1$ case.

\section{The coarse moduli of undecomposable quasi-parabolic bundles}

Here we  describe the coarse moduli space $P_d(\bt)$  of undecomposable 
quasi-parabolic bundles $(E, \bl)$  of rank $2$ and of degree $d$ over
$(X,D)=(\pc, \bt) $.  When $\bnu$ is generic,  $P_d(\bt)$  can be also 
identified with  the coarse 
moduli space of $\bnu$-flat quasi-paraboli bundles of rank $2$ of degree $d$. 
Here a quasi-parabolic bundle $(E, \bl)$ is called {\em $\bnu$-flat}  if  
it  admit a connection $\nabla$ with given local exponents $\bnu$.

It is a non separated scheme constructed by patching together moduli spaces 
$P_d^{\bw}(\bt)$ of (semi)stable parabolic bundles 
for different choices of weights $\bw$ ;
those charts are smooth projective manifolds. A similar description
has been done in \cite{ArinkinLysenko1}.
In the degree $d=-1$ case, using Higgs fields and apparent map for a convenient cyclic vector, 
we define a natural ``birational'' map
$P_{-1}(\bt)\dashrightarrow \vert\cO_X(n-3)\vert^*\simeq\mathbb P_{\cc}^{n-3}$,
which turn to be an isomorphism in restriction to 
one of the projective charts $P_{-1}^{\bw}({\bt})$, 
for a convenient choice of weights. 
Under the condition that $\bnu$ is generic, 
this will be used in the next section to compute and describe the forgetful map 
\begin{equation}\label{eq:forgetful}
M(\bt,\bnu) \to P_{-1}(\bt) \ ;\ (E, \nabla,  \bl )\mapsto(E,\bl).
\end{equation}

\subsection{The flatness of undecomposable quasi-parabolic bundles}

In order to define the forgetful map (\ref{eq:forgetful}), we would like to 
characterize $\bnu$-flat quasi-parabolic bundles $(E, \bl)$ of rank $2$, 
which are,  by definition,  quasi-parabolic bundles $(E,\bl)$ of  rank $2$ and of degree $d$ 
on $(\pc, \bt)$ arising in our moduli spaces of parabolic connections 
$M(\bt, \boldsymbol{\nu})$,  i.e. admitting a connection $\nabla$ with prescribed poles, parabolics and eigenvalues. 
Under the condition that $\bnu$ is generic, this is given by the parabolic version of Weil criterium, see for instance in \cite[Proposition 3]{ArinkinLysenko1}.

\begin{proposition}\label{prop:flat=undecomposable}
Assume $\nu_1^{\epsilon_1}+\cdots+\nu_n^{\epsilon_n} \not \in 
\mathbb Z$ for any $\epsilon_i\in\{+,-\}$. 
Given a quasi-parabolic bundle $(E,\boldsymbol{l})$,  the following condition are equivalent:
\begin{enumerate}
\item $(E,\boldsymbol{l})$ is $\bnu$-flat, that is, 
$(E,\boldsymbol{l})$ 
admits a parabolic connection $\nabla$ with eigenvalues $\boldsymbol{\nu}$, 
\item $(E,\boldsymbol{l})$ is simple: the only automorphisms of $E$ preserving parabolics are scalar,
\item $(E,\boldsymbol{l})$ is undecomposable: there does not exist decomposition $E=L_1\oplus L_2$
such that each parabolic direction $l_i$ is contained either in $L_1$ or in $L_2$.
\end{enumerate}
\end{proposition}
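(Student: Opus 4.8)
The plan is to prove the cyclic chain of implications $(1)\Rightarrow(2)\Rightarrow(3)\Rightarrow(1)$. The implication $(2)\Rightarrow(3)$ is immediate in its contrapositive form: if $E=L_1\oplus L_2$ with every $l_i$ contained in $(L_1)_{|t_i}$ or $(L_2)_{|t_i}$, then the projection onto $L_1$ is a non-scalar endomorphism of $E$ preserving all the parabolics, so $(E,\bl)$ fails to be simple. The two substantial steps are $(1)\Rightarrow(2)$, where I exploit the connection to kill non-scalar endomorphisms, and $(3)\Rightarrow(1)$, the actual existence statement.

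For $(1)\Rightarrow(2)$, suppose $(E,\nabla,\bl)$ carries a $\bnu$-connection and let $\phi$ be a parabolic endomorphism of $E$. Over $\pc$ the coefficients $\tr(\phi)$ and $\det(\phi)$ of its characteristic polynomial are global functions, hence constants, so $\phi$ has constant eigenvalues and two cases occur. If $\phi$ has two distinct eigenvalues, the associated spectral projectors are polynomials in $\phi$ and split $E=E_1\oplus E_2$ into eigen-subbundles; each $l_i$, being $\phi_{|t_i}$-invariant, equals one of the two eigenlines, so the splitting respects the parabolics. Since $E_1$ is a direct summand, $\nabla_1:=\pi_1\circ\nabla\circ\iota_1$ is a logarithmic connection on the line bundle $E_1$, and inspection of the residue matrix of $\nabla$ on the eigenline $l_i$ gives $\res_{t_i}(\nabla_1)=\nu_i^{\epsilon_i}$ with $\epsilon_i=+$ exactly when $l_i\subset E_1$. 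The residue theorem then yields $\deg E_1=-\sum_i\nu_i^{\epsilon_i}\in\zz$, contradicting the hypothesis. If instead $\phi=\lambda\cdot\mathrm{id}+N$ with $N\neq0$ nilpotent, set $L_1:=\ker N$, a line subbundle with quotient $L_2:=E/L_1$ and nonzero induced map $\bar N:L_2\to L_1$; at every pole where $N_{|t_i}\neq0$ one checks that $l_i=(L_1)_{|t_i}$, so $\res_{t_i}(\nabla)$ preserves this line and the second fundamental form $\sigma:L_1\to L_2\otimes\Omega^1_X(D)$ has vanishing residue there. A degree count shows $\sigma$ is a section of a line bundle of negative degree, whence $\sigma=0$ and $L_1$ is $\nabla$-invariant; the same residue computation again forces $\deg L_1=-\sum_i\nu_i^{\epsilon_i}\in\zz$, a contradiction. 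Hence $\phi$ must be scalar.

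The remaining implication $(3)\Rightarrow(1)$ is the parabolic Atiyah--Weil criterion and is the main obstacle. I would realise the set of $\bnu$-parabolic connections on a fixed $(E,\bl)$ as an affine space modelled on $H^0$ of the sheaf of $\Omega^1_X(D)$-valued parabolic endomorphisms whose residues vanish on the parabolic flags, its nonemptiness being controlled by an obstruction class in the corresponding $H^1$. By Serre duality on $\pc$ this group is dual to the space of trace-free parabolic endomorphisms, and a nonzero obstruction would, through the very degree-and-residue analysis used above, manufacture a parabolic-compatible splitting of $(E,\bl)$. Undecomposability therefore forces the obstruction to vanish and a connection with the prescribed spectral data to exist; see \cite[Proposition 3]{ArinkinLysenko1}. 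This cohomological vanishing is exactly where both the rank-two geometry over $\pc$ and the non-integrality hypothesis $\sum_i\nu_i^{\epsilon_i}\notin\zz$ enter decisively, the rest being the routine residue bookkeeping indicated above.
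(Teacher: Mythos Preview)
Your proposal is correct, and in fact you supply considerably more detail than the paper does: the paper gives no proof at all for this proposition, merely pointing to the parabolic Weil criterion in \cite[Proposition~3]{ArinkinLysenko1}, precisely the reference you invoke for the hard direction $(3)\Rightarrow(1)$. Your cyclic scheme $(1)\Rightarrow(2)\Rightarrow(3)\Rightarrow(1)$ with the explicit residue bookkeeping is a faithful fleshing-out of what that citation contains.

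One remark on the nilpotent branch of $(1)\Rightarrow(2)$, where you assert that ``a degree count shows $\sigma$ is a section of a line bundle of negative degree'': this is correct but deserves one line of justification, since it is the crux. With $k$ poles at which $N_{|t_i}\neq0$, the map $\bar N:L_2\to L_1$ vanishes at the remaining $n-k$ poles, giving $\deg L_1-\deg L_2\ge n-k$; meanwhile $\sigma$ lies in $H^0(L_1^{-1}L_2\otimes\Omega^1_X(D'))$ with $D'$ of degree $n-k$, so the line bundle has degree $(\deg L_2-\deg L_1)-2+(n-k)\le -2$. Making this explicit would remove any appearance of a gap.
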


\begin{remark}{\rm When $\nu_1^{\epsilon_1}+\cdots+\nu_n^{\epsilon_n} \in 
\mathbb Z$ for some $\epsilon_i\in\{+,-\}$, we still have 
[$\text{simple}\Leftrightarrow\text{undecomposable}\Rightarrow\text{flat}$]
but some decomposable parabolic bundles also admit connections compatible with $\bnu$.}
\end{remark}


We promptly deduce the following obstruction on $E$. 

\begin{corollary}Write $E=\mathcal O_{X}(d_1)\oplus \mathcal O_{X}(d_2)$ with
$d_1\le d_2$, $\deg(E)=d_1+d_2=d$.
Then $E$ admits an undecomposable quasi-parabolic structure if, 
and only if 
$$d_2-d_1\le n-2\ \ \ \text{(except $n=2$ and $d_1=d_2$ which is decomposable).}$$
\end{corollary}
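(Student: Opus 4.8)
The plan is to use Proposition \ref{prop:flat=undecomposable}, which tells us that for a quasi-parabolic bundle $(E,\bl)$, being undecomposable is equivalent to being simple (and both imply flatness). So the corollary reduces to a purely algebro-geometric question: given $E=\cO_X(d_1)\oplus\cO_X(d_2)$ with $d_1\le d_2$, when can we place parabolic directions $l_i\subset E_{|t_i}$ so that no splitting $E=L_1\oplus L_2$ of $E$ respects all the $l_i$? I would first analyze the structure of automorphisms and sub-line-bundles of $E$. The key observation is that $\Hom(\cO_X(d_2),\cO_X(d_1))=H^0(\cO_X(d_1-d_2))$, so when $d_2>d_1$ the first factor $\cO_X(d_1)$ is the unique sub-line-bundle of maximal degree $d_1$, whereas $\cO_X(d_2)$ admits a whole family of complementary embeddings parametrized by $H^0(\cO_X(d_2-d_1))\simeq\cc^{d_2-d_1+1}$.

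The heart of the argument is a dimension/parameter count. A decomposition $E=L_1\oplus L_2$ compatible with the parabolics forces, at each point $t_i$, the direction $l_i$ to lie in $(L_1)_{|t_i}$ or $(L_2)_{|t_i}$. Since $L_1$ must be the destabilizing sub-bundle $\cO_X(d_2)$ (when $d_1<d_2$) and $L_2$ varies in a $(d_2-d_1)$-dimensional family of sub-line-bundles isomorphic to $\cO_X(d_1)$, I would count how many generic parabolic directions can be forced to lie on a common such splitting. Each condition ``$l_i$ lies on some complement $L_2$'' imposes constraints, and a generic choice of $l_i$ (avoiding the fixed subline $\cO_X(d_2)$) can be destroyed as a decomposable configuration precisely when $n$ is large enough relative to $d_2-d_1$. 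The clean bookkeeping is: a line subbundle $\cO_X(d_1)\hookrightarrow E$ meeting $d_2-d_1+1$ or fewer prescribed generic directions can always be found, so undecomposability \emph{fails} whenever $n\le d_2-d_1+1$, i.e. $d_2-d_1\ge n-1$; conversely, when $d_2-d_1\le n-2$ one can choose the $l_i$ in general position so that no single complement passes through all of them, guaranteeing an undecomposable structure exists.

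For the converse direction (showing undecomposable structures \emph{exist} when $d_2-d_1\le n-2$), I would give an explicit construction: pick the $l_i$ in general position and verify directly that the only automorphisms preserving all parabolics are scalars, invoking the simplicity criterion (2) of the Proposition. The mechanism is that a nonscalar automorphism of $E=\cO_X(d_1)\oplus\cO_X(d_2)$ is upper-triangular (when $d_1<d_2$) with a unipotent or diagonal part, and preserving $n\ge d_2-d_1+2$ generic directions over-determines such an automorphism, forcing it to be scalar. The boundary and degenerate cases need separate care: the case $d_1=d_2$ (where $E$ is a trivial twist of $\cO_X^{\oplus2}$ and $\Aut(E)=GL_2$ acts transitively, so any two directions can be simultaneously split off unless there are at least three) and the excluded case $n=2,\ d_1=d_2$ (where $E_{|t_1},E_{|t_2}$ always admit a common diagonalizing splitting, hence is always decomposable).

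\textbf{Main obstacle.} I expect the delicate point to be the \emph{sharp} counting at the boundary $d_2-d_1=n-2$ versus $d_2-d_1=n-1$, and in particular handling the balanced case $d_1=d_2$ uniformly with the unbalanced case $d_1<d_2$. In the balanced case the automorphism group is the full $GL_2$ rather than a Borel, so the parameter count for ``how many directions pin down a splitting'' changes, and one must confirm that the same inequality $d_2-d_1\le n-2$ emerges from a genuinely different group action. Getting the edge cases (especially $n=2$) to line up with the stated exception, rather than off by one, is where the real care is needed.
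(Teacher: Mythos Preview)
Your approach is essentially the paper's: count the family of $\cO_X(d_1)$-complements (of dimension $d_2-d_1+1$) and use interpolation to decide when a parabolic configuration can be split. Two small points are worth tightening. First, you have a slip in the first paragraph: when $d_1<d_2$ it is $\cO_X(d_2)$, not $\cO_X(d_1)$, that is the unique maximal-degree sub-line-bundle; your subsequent sentences show you understand this, so it is only a wording error. Second, drop the hedge ``generic'' in the key counting step: for the \emph{only if} direction you must show that \emph{every} parabolic structure with $n\le d_2-d_1+1$ is decomposable, and indeed Lagrange interpolation gives an $\cO_X(d_1)$-complement through \emph{any} collection of at most $d_2-d_1+1$ directions lying off $\cO_X(d_2)$ (the remaining parabolics, those already on $\cO_X(d_2)$, sit on the other factor). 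The paper's proof does exactly this, then for the existence direction simply places all $l_i$ off $\cO_X(d_2)$ and chooses $l_n$ off the unique $\cO_X(d_1)$ through the first $d_2-d_1+1$ of them; your proposed route via simplicity and automorphisms is a legitimate alternative but is more work than this one-line construction.
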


\begin{proof}
When $d_1=d_2$, any decomposition of 
$E = \mathcal O_{X}(d_1)\oplus \mathcal O_{X}(d_1)$ 
is given by two distinct embeddings 
$\mathcal O_{X}(d_1)\hookrightarrow E$; one such embedding 
is determined once you ask it to contain one parabolic direction. Then for $n=2$ (or less)
we can decompose the parabolic data, while for $n\ge 3$, we can easily construct
a non decomposable parabolic structure.

When $d_1<d_2$, any decomposition of $E$ 
is given by the destabilizing bundle 
$L_2=\mathcal O_{X}(d_2)$
and any embeddings $\mathcal O_{X}(d_1)\simeq L_1\hookrightarrow E$. 
Latter ones form a family of dimension
$n'=d_2-d_1+1$: more precisely, $n'$ parabolics  lying outside of  
$\mathcal O_{X}(d_2)$ are always contained in such subbundle
and determine it.
If $n\le d_2-d_1+1$ then any quasi-parabolic structure is thus 
decomposable; if $n>d_2-d_1+1$, it suffices to choose
all $l_i$'s outside $\mathcal O_{X}(d_2)$, and $l_n$ outside 
of the $\mathcal O_{X}(d_1)$ defined by the $d_2-d_1+1$ first parabolics.
\end{proof}

One of the difficulty to define the forgetful map 
$(E,\nabla,\bl)\mapsto(E,\boldsymbol{l})$
is that, although the moduli space of connections is 
smooth and separated (for generic $\bnu$), the image is never separated.
The reason is that, although the former moduli spaces 
can be constructed as geometrical quotient of
stable objects, the latter one always contain unstable 
ones and will be not a good scheme.  

However, we have
\begin{proposition}\label{prop:undecomposable=stable}
A quasi-parabolic bundle $(E,\boldsymbol{l})$ is undecomposable if, and only if, it is stable
for a convenient choice of weights $\boldsymbol{w}$.
Hence, if we assume that $\bnu$ is generic, that is, 
$\nu_1^{\epsilon_1}+\cdots+\nu_n^{\epsilon_n} \not \in 
\mathbb Z$ for any $\epsilon_i\in\{+,-\}$, a quasi-parabolic bundle $(E,\boldsymbol{l})$
is $\bnu$-flat if and only if it is stable
for a convenient choice of weights $\boldsymbol{w}$.
\end{proposition}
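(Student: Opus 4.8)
The plan is to prove the equivalence ``$(E,\boldsymbol l)$ undecomposable $\iff$ $\bw$-stable for some $\bw$'' first, and then to deduce the ``flat'' statement immediately by combining this with Proposition \ref{prop:flat=undecomposable}. The direction ``$\bw$-stable $\Rightarrow$ undecomposable'' should be the easy one: if $(E,\boldsymbol l)$ decomposed as $L_1\oplus L_2$ with each $l_i$ lying in one of the factors, then $L_1$ and $L_2$ would both be subbundles respecting the parabolic structure, and one checks directly from the definition of $\mathrm{Stab}$ in (\ref{def:StabIndex}) that $\mathrm{Stab}(L_1)+\mathrm{Stab}(L_2)=0$ (the parabolic contributions split and the degree term $\deg(E)-2\deg(L_j)$ sums to zero over $j=1,2$). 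Hence at least one of the two stability indices is $\le 0$, contradicting $\bw$-stability for every $\bw$; so a $\bw$-stable bundle cannot decompose.

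For the harder direction ``undecomposable $\Rightarrow$ $\bw$-stable for some $\bw$'', I would argue as follows. Suppose $(E,\boldsymbol l)$ is undecomposable and look for weights $\bw\in[0,1]^n$ making it stable. Writing $E=\cO_X(d_1)\oplus\cO_X(d_2)$ with $d_1\le d_2$, the Corollary above forces $d_2-d_1\le n-2$, so there is at least some slack between the destabilizing subbundle and the parabolic data. The key point is that there are only \emph{finitely many} rank-one subbundles $F\subset E$ that can possibly destabilize, because a destabilizing $F$ must have $\deg F$ bounded below (by $\mathrm{Stab}(F)>0$ together with $|w_i|\le 1$) and above (a line subbundle of $E$ has $\deg F\le d_2$), and for each admissible degree the subbundles of maximal degree form a bounded family; among these, the relevant combinatorial data is just the finite pattern of incidences $\{\,i : l_i=F_{|t_i}\,\}$. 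Thus stability is governed by finitely many linear inequalities
\begin{equation*}
\mathrm{Stab}(F)=\deg(E)-2\deg(F)+\sum_{l_i\ne F_{|t_i}}w_i-\sum_{l_i=F_{|t_i}}w_i>0
\end{equation*}
in the weights $\bw$, one for each relevant $F$. The plan is to show these inequalities are \emph{simultaneously satisfiable} inside the cube $[0,1]^n$ precisely when no $F$ contains all parabolics, i.e.\ precisely under undecomposability.

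The main obstacle is this satisfiability step: I must produce an explicit $\bw$ (or prove existence by a convexity/linear-programming argument) that pushes every $\mathrm{Stab}(F)$ above zero at once. The natural strategy is to contrapose: if no weight worked, then by LP duality (Farkas' lemma) there would be a nonnegative combination of the constraints yielding a contradiction, and I would interpret such a combination geometrically as exhibiting a decomposition $E=L_1\oplus L_2$ adapted to the parabolics, contradicting undecomposability. Concretely, when $E$ is semistable as a bundle ($d_1=d_2$ or $d_1<d_2$ with few parabolics off the top factor) one pushes the weights toward the appropriate corner of the cube to favor subbundles carrying many parabolics, using that undecomposability guarantees no single $F$ carries \emph{all} of them, so some room always remains. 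Once the equivalence with stability is established, the final sentence of the Proposition follows at once: under the genericity hypothesis $\sum_i\nu_i^{\epsilon_i}\notin\zz$, Proposition \ref{prop:flat=undecomposable} identifies $\bnu$-flatness with undecomposability, and we have just identified undecomposability with $\bw$-stability for some $\bw$, completing the chain of equivalences.
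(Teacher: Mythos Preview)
Your treatment of the easy direction (``$\bw$-stable $\Rightarrow$ undecomposable'') is correct and essentially identical to the paper's: in a parabolic decomposition $E=L_1\oplus L_2$ the identity $\mathrm{Stab}(L_1)+\mathrm{Stab}(L_2)=0$ kills stability for every $\bw$.

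For the hard direction your approach diverges from the paper's, and the divergence is exactly where your argument has a gap. You reduce the problem to the simultaneous satisfiability of finitely many linear inequalities in $\bw$ and then invoke LP duality, promising to ``interpret such a combination geometrically as exhibiting a decomposition $E=L_1\oplus L_2$''. But this interpretation is the entire content of the implication, and you do not carry it out. A Farkas certificate here is a nonnegative combination of the constraints $\mathrm{Stab}(F)\le 0$ together with the box constraints $0\le w_i\le 1$, and there is no a priori reason such a certificate is supported on exactly two subbundles $F_1,F_2$ with complementary incidence sets and $\deg F_1+\deg F_2=\deg E$ (which is what you would need in order to read off a splitting). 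Turning an arbitrary dual certificate into such a pair is not automatic; at minimum you would need an extremality argument about the polyhedron of realized incidence types, and this is work you have not done.

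The paper avoids this altogether by a constructive reduction. Given undecomposable $(E,\boldsymbol l)$ with $E=\cO_X(d_1)\oplus\cO_X(d_2)$, it picks a subbundle $L_1\simeq\cO_X(d_1)$ through a maximal set of parabolics (at least $d_2-d_1+1$ of them), applies $d_2-d_1$ negative elementary transformations along $d_2-d_1$ of those directions, and lands on an undecomposable $(E',\boldsymbol l')$ with $E'$ \emph{balanced}, $E'\simeq L_1\oplus L_1$. On a balanced bundle one can then find three parabolics $l'_1,l'_2,l'_3$ lying on three pairwise distinct embeddings $L_1\hookrightarrow E'$, and the explicit weights $0<w'_1=w'_2=w'_3<\tfrac{2}{3}$, all other $w'_j=0$, make $(E',\boldsymbol l')$ stable by a direct check. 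Since elementary transformations send stable to stable under the dictionary $w_j\leftrightarrow 1-w_j$ at the transformed points, one pulls these weights back to weights for the original $(E,\boldsymbol l)$. This is short, explicit, and sidesteps any duality argument; it also makes transparent \emph{which} weights work, which is used later when the paper covers $P_d(\bt)$ by the concrete charts $U_{i,j,k}$ in (\ref{ExhaustiveWeights}).

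Your final paragraph, deducing the ``$\bnu$-flat'' statement from Proposition~\ref{prop:flat=undecomposable}, is correct.
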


This will allow us to construct our coarse moduli space of undecomposable quasi-parabolic bundles $P_d(\bt)$ by patching
together moduli spaces $P_d^{\bw}(\bt)$ where $\boldsymbol{w}$ runs over a 
finite family of convenient weights.  

\begin{proof}Let $(E,\boldsymbol{l})$ be a quasi-parabolic bundle and let us write $E=L_1\oplus L_2$ as above, 
$L_i=\mathcal O_{X}(d_i)$, $d_1\le d_2$, $d_1+d_2=d$. 

In the decomposable case, all parabolics are contained in the union $L_1\sqcup L_2$.
In this case, it is easy to check that $\mathrm{Stab}(L_1)+\mathrm{Stab}(L_2)=0$,
whatever the weights are, so that one of the two must be $\le0$:  
$(E,\boldsymbol{l})$ is not stable for any choice 
of weights. Note that it is however (strictly) semi-stable 
for a convenient choice of weights.

In the undecomposable case, we may choose $L_1$ passing through a maximum number of parabolics,
i.e. at least $d_2-d_1+1$. Choose $d_2-d_1$ of them and apply a negative elementary transformation at each of those directions.
We get a new undecomposable quasi-parabolic bundle $(E',\boldsymbol{l}')$ with $E'\simeq L_1\oplus L_1$. 
In particular, there are $3$ parabolics, say $l'_1$, $l'_2$ and $l'_3$, lying on $3$ distincts embeddings $L_1\subset E'$.
It is easy to check that this bundle is stable for weights $\boldsymbol{w}'$
$$0 <w_1'=w_2'=w_3'<\frac{2}{3}\ \ \ \text{and other}\ \ \ w_i'=0.$$
Thereore, the original quasi-parabolic bundle   
$(E,\boldsymbol{l})$ is stable for  weights $\boldsymbol{w}$ defined by 
\begin{itemize}
\item $w_j'=1-w_j$ if $l_j$ is one of the directions where we made elementary transformation,
\item $w_j'=w_j$ for other parabolics.
\end{itemize}
\end{proof}

\subsection{GIT moduli spaces of stable parabolic bundles}\label{sec:GITbundle}

Given weights, the moduli space of semistable points 
$P_d^{\bw}(\bt)$ is a (separated but may be singular) 
projective variety; moreover, stable points are smooth 
(cf. \cite{MehtaSeshadri}, \cite{Bhosle},  
\cite{BodenHu}, \cite{MaruyamaYokogawa}, \cite{BiswasHollaKumar} and \cite{Yokogawa}). 

Let $\mathcal W:=[0,1]^n$ be the set of weights. 
Given a parabolic bundle $(E,\boldsymbol{l})$
of degree $d$ and a  line subbundle $L\hookrightarrow E$ 
of degree $k$, denote by $I_1$ the set of indices of those parabolic 
directions contained in $L$, and $I_2 = \{1,\ldots,n\} \setminus I_1$, 
so that $\{1,\ldots,n\}=I_1\sqcup I_2$ .  
Then, the (parabolic) stability index of $L$ is zero if, and only if, the weights
lie along the hyperplane
$$
H_d(k,I_1):=\{\boldsymbol{w}\ ;\ d-2k-\sum_{i\in I_1} w_i+\sum_{i\in I_2} w_i=0\}.
$$
This equality cuts out the set of weights $[0,1]^n$ into two open sets 
(possibly one is empty, e.g. for large $k$).
Note that the wall $H_d(k,I_1)=H_d(d-k,I_2)$ also bound the stability locus
of those $L'=\mathcal O_{X}(d-k)\hookrightarrow E'$ 
passing through parabolics $I_2$ (with $\deg(E')=d$).
On one side those $L\hookrightarrow E$ are destabilizing, 
on the other side those $L'\hookrightarrow E'$
are destabilizing. Along the wall, decomposable parabolic bundles 
$\mathcal O_{X}(k)\oplus\mathcal O_{X}(d-k)$
with parabolics distributed on the two factors as 
$\{1,\ldots,n\}=I_1\sqcup I_2$ may occur as strictly 
semi-stable points in $P_d^{\bw}(\bt)$.

When we cut out $[0,1]^n$ by all possible walls $H_d(k,I_1)$ (only finitely
many intersect) we get in the complement many chambers (connected components) along which the moduli
space only consists of stable parabolic bundles (semi-stable $\Rightarrow$ stable). 
Thus $P_d^{\bw}(\bt)$ is smooth and locally constant along each chamber.

Mind that $P_d^{\bw}(\bt)$ may be empty over some chambers 
like it so happens for $k$ odd and $\boldsymbol{w}=(0,\ldots,0)$: the bundle is unstable
since it is in the usual sense (parabolics are not taken into account for vanishing weights).

A parabolic bundle $(E,\boldsymbol{l})$ is said to be {\em generic} if 
\begin{itemize}
\item $E=\mathcal O_{X}(d_1)\oplus\mathcal O_{X}(d_2)$ with $0\le d_2-d_1\le 1$,
\item a line bundle $L\subset E$ cannot contain more that $m+1$ parabolics, where $m=\deg(E)-2\deg(L)$. 
\end{itemize}
Note that $m+1$ is the dimension of deformation for such a subbundle $L\subset E$.
It is easy to see that the set $P_d(\bt)^0\subset P_d(\bt)$ of generic bundles $(E, \bl)$ over $(X,D)=(\pc, \bt)$ is a variety of 
dimension $n-3$.

\begin{proposition}A generic parabolic bundle $(E,\bl) \in P_d(\bt)^0$ is stable for  weights $\bw$ if, and only if, the weights $\boldsymbol{w}$
satisfy all inequalities
$$m-\sum_{i\in I_1}w_i+\sum_{i\in I_2}w_i>0$$
with $\#I_1=m+1$ and $m\ge0$ integer, $m\equiv d\ \text{mod}\ 2$.
The moduli space $P_d^{\bw}(\bt)$ has therefore expected
dimension $n-3$ and contains $P_d(\bt)^0$ as an open subset.
\end{proposition}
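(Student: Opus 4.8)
The plan is to turn parabolic stability into a finite system of linear inequalities in $\bw$ by enumerating the destabilizing line subbundles of $(E,\bl)$ and then isolating the binding ones.

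First I would rewrite the stability condition explicitly. By definition $(E,\bl)$ is $\bw$-stable exactly when $\mathrm{Stab}(L)>0$ for every saturated line subbundle $L\subset E$; writing $k=\deg L$, $m=\deg E-2k=d-2k$, and $I_1=\{i:l_i=L_{|t_i}\}$, $I_2=\{1,\dots,n\}\setminus I_1$, one has $\mathrm{Stab}(L)=m-\sum_{i\in I_1}w_i+\sum_{i\in I_2}w_i$. Since $E=\cO_X(d_1)\oplus\cO_X(d_2)$ is balanced ($0\le d_2-d_1\le1$), a saturated $L$ satisfies $k\le d_2$, hence $m\ge d-2d_2=d_1-d_2\ge-1$; and by the genericity hypothesis $\#I_1\le m+1$. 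Thus every subbundle contributes an inequality of precisely the shape in the statement, but a priori with $\#I_1\le m+1$ rather than $\#I_1=m+1$.

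Next I would reduce to the binding inequalities $\#I_1=m+1$ and bound the range of $m$. Enlarging $I_1$ by one index changes $\mathrm{Stab}$ by $-2w_i\le0$, so if $\#I_1<m+1$ one may choose $I_1'\supset I_1$ of size $m+1$ and write $\mathrm{Stab}(L)=\bigl(m-\sum_{I_1'}w+\sum_{I_2'}w\bigr)+2\sum_{I_1'\setminus I_1}w_i$; positivity of the size-$(m+1)$ inequality then forces $\mathrm{Stab}(L)>0$. Hence the inequalities with $\#I_1=m+1$ imply all others with the same $m$. Moreover, when $m\ge n$ one has $\mathrm{Stab}(L)\ge m-\sum_i w_i\ge m-n\ge0$, strictly for interior weights, so such subbundles never destabilize; consequently only the finitely many values $m\equiv d\pmod2$ with $0\le m\le n-1$ yield genuine constraints, together with the single maximal-degree subbundle $k=d_2$ (for which $m=d_1-d_2$ and, by genericity, $I_1=\emptyset$).

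The heart of the argument is the realization step, giving the forward implication. I claim that for generic parabolic directions every subset $I_1$ with $\#I_1=m+1$ and $m\ge0$ (equivalently $k\le d_1$) is realized by a subbundle whose parabolic set is exactly $I_1$. A subbundle $L\cong\cO_X(k)$ is a nowhere-vanishing section of $E(-k)=\cO_X(d_1-k)\oplus\cO_X(d_2-k)$ up to scalar, i.e. a point of $\mathbb PH^0(E(-k))\cong\mathbb P^{m+1}$, and the requirement $L_{|t_i}=l_i$ is the single linear condition that its value at $t_i$ lie on the line $l_i\subset E_{|t_i}$. For $k\le d_1$ the bundle $E(-k)$ is globally generated, so the evaluation $H^0(E(-k))\to\bigoplus_{i\in I_1}E_{|t_i}/l_i$ is surjective for generic directions; as source and target have dimensions $m+2$ and $m+1$, the kernel is a line, producing a unique such $L$, which by genericity meets no further parabolic. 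Feeding these subbundles into the stability condition gives one direction, the reduction above gives the other, and together they establish the equivalence in the range $m\ge0$. (In the odd-degree case the rigid maximal subbundle must be handled on its own, contributing the single inequality $d_1-d_2+\sum_i w_i>0$.) The main obstacle is exactly this surjectivity: one must verify that the $m+1$ incidence conditions are independent, which fails only on the proper closed locus of special $(l_i)$ that the two defining bullets of $P_d(\bt)^0$ precisely exclude.

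Finally, for the dimensional claim I would note that the inequalities cut $[0,1]^n$ into finitely many chambers, and for $\bw$ in any such chamber every member of $P_d(\bt)^0$ is stable; since each $\mathrm{Stab}(L)>0$ is an open condition and the two defining properties of $P_d(\bt)^0$ are open, $P_d(\bt)^0$ is an open subvariety of $P_d^{\bw}(\bt)$. As $P_d(\bt)^0$ has dimension $n-3$ — the $n$ parabolic directions modulo the three-dimensional $\mathrm{Aut}(E)/\mathbb C^{\ast}$ for balanced $E$ — and $P_d^{\bw}(\bt)$ is smooth at stable points of this expected dimension, the moduli space has dimension $n-3$ and contains $P_d(\bt)^0$ as a dense open chart.
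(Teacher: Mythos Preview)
The paper states this proposition without proof, so there is no argument to compare against directly. Your approach is sound and supplies what the paper omits: the reduction to the extremal case $\#I_1=m+1$ via monotonicity in the $w_i$, the realization of each such $I_1$ by an actual subbundle, and the dimension count.

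One point in your realization step deserves sharpening. You phrase the issue as surjectivity of $H^0(E(-k))\to\bigoplus_{i\in I_1}E_{|t_i}/l_i$, but in fact the kernel is automatically nonzero by the dimension count ($m+2$ versus $m+1$), so a section through the chosen $l_i$ always exists. What the paper's genericity actually buys you is that any such nonzero section is \emph{nowhere vanishing}: if it had a zero, its saturation would be a subbundle of strictly higher degree (hence with strictly smaller $m'$) still passing through all $m+1>m'+1$ parabolics in $I_1$, contradicting the second bullet defining $P_d(\bt)^0$. The same observation shows the resulting $L$ meets no further parabolic and is unique. So your claim that the two bullets ``precisely exclude'' the bad locus is correct, but the mechanism is this saturation argument rather than generic surjectivity.

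You are right to flag the odd-degree case separately. The rigid maximal subbundle $\mathcal O_X(d_2)$ has $m=d_1-d_2=-1$ and, by genericity, $I_1=\emptyset$; it contributes the inequality $\sum_i w_i>1$, which is genuinely not among the $m\ge0$ inequalities listed and is not implied by them (take all $w_i$ near $0$). The proposition as stated in the paper is thus slightly incomplete for odd $d$, and your parenthetical correction is the appropriate fix; the paper's later applications (democratic weights with $w>\frac{1}{n}$) happen to satisfy this extra inequality automatically.
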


A weight $\boldsymbol{w}$ is said to be {\em admissible} if it lies outside the walls and moreover satisfies
 all above inequalities; a chamber is admissible if it consists of admissible weights.
It follows that the intersection of all $P_d^{\bw}(\bt)$ over admissible weights
have a common open subset of dimension $n-3$, namely the moduli space of generic bundles $P_d(\bt)^0$
(that does not depend on choice of generic weights).

Let  us denote by $\cW_{adm}$ the set of admissible weights and decompose it into 
the finite number of connected components $ \cW_{adm} = \cup_{i, finite} \cW_{adm, i}$ separated by  walls.   
Then the moduli space $P^{\bw}_d(\bt)$ is constant over the connected components $\bw \in \cW_{adm, i}$.  
So we may choose a representative $\bw_i \in \cW_{adm, i}$ for each $i$, and for different $i_1, i_2$, 
we have a big nonempty open set $U_{i_1, i_2} \supset P_d(\bt)^0$  such that 
$$
P_d^{\bw_{i_1}}(\bt) \supset U_{i_1, i_2} \subset P_d^{\bw_{i_2}}(\bt).
$$ 
We can patch all of $P_d^{\bw_{i}}(\bt)$  over these open subets 
$U_{i_1, i_2}$ and obtain the coarse moduli space of undecomposable parabolic bundles of degree $d$.  
Thus we have the following 

\begin{proposition}\label{prop:modulistack} The coarse moduli space $P_{d}(\bt)$ 
of undecomposable parabolic bundles of degree $d$ can be obtained as  
\begin{equation}\label{eq:patch}
P_{d}(\bt) = \cup_{i, finite } P_d^{\bw_{i}}(\bt).
\end{equation} 
\end{proposition}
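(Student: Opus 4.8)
The plan is to verify that the explicit patching described just above actually produces the coarse moduli space of undecomposable parabolic bundles. There are three things to check: (i) \emph{completeness} of the cover, i.e. that every undecomposable $(E,\bl)$ occurs in at least one admissible chart $P_d^{\bw_i}(\bt)$; (ii) that the charts glue consistently along the overlaps $U_{i_1,i_2}$; and (iii) that the resulting space corepresents the moduli functor of undecomposable parabolic bundles. Since the individual charts and their overlaps have already been produced in the preceding discussion, the proof is essentially an assembly of Proposition \ref{prop:undecomposable=stable}, the generic-stability criterion, and the finiteness of the wall structure.

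First I would establish completeness. By Proposition \ref{prop:undecomposable=stable}, a given undecomposable $(E,\bl)$ is $\bw$-stable for some weight $\bw$. For a fixed bundle the stability index $\mathrm{Stab}(L)$ is an affine function of $\bw$, so the locus $P_{E,\bl}\subset\cW$ of weights making $(E,\bl)$ stable is a finite intersection of open half-spaces, hence a nonempty open convex polytope; deleting the finitely many walls $H_d(k,I_1)$ still leaves an open chamber $C\subset P_{E,\bl}$ on which $(E,\bl)$ stays stable. It remains to arrange that $C$ be admissible. The key observation is that whenever a configuration $I_1$ (with $\#I_1=m+1$, $m=\deg E-2k\ge 0$) is realised by a degree-$k$ line subbundle $L\subset E$, stability of $(E,\bl)$ forces $\mathrm{Stab}(L)>0$, which is exactly the genericity inequality $m-\sum_{i\in I_1}w_i+\sum_{i\in I_2}w_i>0$; hence all genericity inequalities attached to subbundles of $(E,\bl)$ hold automatically on $C$. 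The remaining inequalities concern configurations not carried by $(E,\bl)$, and I would dispose of them by sliding $\bw$ inside the convex set $P_{E,\bl}$ towards the nonempty open region $\cW_{adm}$, using convexity to keep $(E,\bl)$ stable along the segment.

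Next, the gluing. Over an overlap both charts parametrise the same isomorphism classes — precisely those $(E,\bl)$ that are simultaneously $\bw_{i_1}$- and $\bw_{i_2}$-stable — so the transition map $P_d^{\bw_{i_1}}(\bt)\supset U_{i_1,i_2}\xrightarrow{\sim}U_{i_2,i_1}\subset P_d^{\bw_{i_2}}(\bt)$ is the tautological identification; the cocycle condition on triple overlaps is therefore automatic, and all these open sets contain the dense generic locus $P_d(\bt)^0$. Since only finitely many walls $H_d(k,I_1)$ meet $[0,1]^n$, there are finitely many admissible chambers, giving a finite atlas and hence a (in general non-separated) scheme or algebraic space $P_d(\bt)=\cup_i P_d^{\bw_i}(\bt)$. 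Finally, the coarse moduli property is local on the charts: each $P_d^{\bw_i}(\bt)$ is a GIT coarse moduli space of $\bw_i$-semistable bundles, and these universal properties glue over the overlaps, so $P_d(\bt)$ corepresents the functor of undecomposable bundles once one combines completeness with the injectivity of the point map (isomorphic bundles being identified in the gluing).

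The main obstacle is the last part of step (i): showing that for \emph{every} undecomposable bundle — including the most degenerate unbalanced ones $\cO(d_1)\oplus\cO(d_2)$ with $d_2-d_1$ close to $n-2$ — the stability polytope $P_{E,\bl}$ genuinely meets $\cW_{adm}$, rather than only some non-admissible off-wall chamber. I expect this to require the elementary-transformation reduction already used in the proof of Proposition \ref{prop:undecomposable=stable}: transform $(E,\bl)$ to a balanced \emph{generic} bundle, where manifestly admissible weights stabilise it, and then transport the admissible weight back through the weight change $w_i\mapsto 1-w_i$ induced by $\mathrm{Elm}^{\pm}$. A secondary, more bookkeeping obstacle is making the non-separated gluing and its corepresentability fully rigorous, since $P_d(\bt)$ is not a variety in the usual separated sense.
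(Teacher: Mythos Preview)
Your proposal is correct and follows the same line as the paper, which in fact offers no separate proof: the proposition is simply stated as a summary of the preceding paragraphs (finiteness of admissible chambers, constancy of $P_d^{\bw}(\bt)$ on each chamber, and the common open set $U_{i_1,i_2}\supset P_d(\bt)^0$), with completeness inherited from Proposition~\ref{prop:undecomposable=stable}.

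One remark on the ``main obstacle'' you flag. You are right that the paper's discussion preceding the proposition does not literally verify that the stabilising weight produced by Proposition~\ref{prop:undecomposable=stable} can be taken \emph{admissible}; however, the paper closes this gap a posteriori in Sections~\ref{subsection:trivialbundle}--3.5, where it exhibits the explicit list $W$ of weights in~(\ref{ExhaustiveWeights}) (three coordinates equal to $\tfrac{1}{2}$, the others $0$ or $1$) and argues directly that the associated charts $U_{i,j,k}$ and their elementary transforms cover all undecomposable parabolic bundles. These weights are off the walls and satisfy the genericity inequalities, so they lie in admissible chambers, and this is exactly the elementary-transformation transport you propose at the end of your step~(i). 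Your ``sliding'' argument towards $\cW_{adm}$, on the other hand, is not sufficient as written: convexity of $P_{E,\bl}$ does not by itself prevent leaving $P_{E,\bl}$ before reaching $\cW_{adm}$. The clean fix is precisely the one you and the paper both use, namely to note that the weights constructed in the proof of Proposition~\ref{prop:undecomposable=stable} already sit in admissible chambers.
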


Recall (cf. Proposition \ref{prop:flat=undecomposable}) that undecomposable $\Leftrightarrow$ flat for generic $\bnu$. 
However, for special $\bnu$,
only $\Rightarrow$ holds and there are decomposable flat bundles. The coarse moduli space of flat bundles might more 
difficult to describe then.

\subsection{Wall-crossing and non separated phenomena}\label{section:WallCrossing}

Let us compare the moduli spaces $P_d^{\bw}(\bt)$ corresponding
to admissible chambers, say $W^1$ and $W^2$, separated by a wall $H_d(k,I_1)$. 
Applying elementary transformations  if necessary,  we can assume $d=k=0$. Those bundles with  parabolics $I_1$ (resp. $I_2$)
on the same $\mathcal O_{X}\hookrightarrow E$ are excluded on $W^2$ (resp. on $W^1$).
Along the wall $H_0(0,I_1)=H_0(0,I_2)$, both are allowed as strictly semi-stable bundles;
they identify, in the moduli space, with the decomposable bundle $E=L_1\oplus L_2$ having 
parabolics $I_i$ on $L_i$. The special two kinds of bundles previously described yield non separated
points in the quotient space. Indeed, they are defined, on the trivial bundle
($L_1=L_2=\mathcal O_{X}$), by
\begin{itemize}
\item $ \boldsymbol{l}^1 $ spanned by $ \begin{pmatrix}1\\0\end{pmatrix} $ for $ i\in I_1$ and $\begin{pmatrix}u_i\\1\end{pmatrix}$ for $i\in I_2$,
\item $\boldsymbol{l}^2$ spanned by $\begin{pmatrix}1\\v_i\end{pmatrix}$ for $i\in I_1$ and $\begin{pmatrix}0\\1\end{pmatrix}$ for $i\in I_2$.
\end{itemize}
If $u_i$ (resp. $v_i$) are generic enough in $\mathbb C$, then $\boldsymbol{l}^i$ defines a stable parabolic
bundle on $W^i$ but is no more semi-stable for the other chamber $W^j$, $\{i,j\}=\{1,2\}$. Now, consider the  
one-parameter family of 
parabolic structures defined by 
\begin{itemize}
\item $\boldsymbol{l}^{\varepsilon}$ spanned by $\begin{pmatrix}1\\ \varepsilon v_i\end{pmatrix}$ for $i\in I_1$ and $\begin{pmatrix}\varepsilon u_i\\1\end{pmatrix}$ for $i\in I_2$.
\end{itemize}
When $\varepsilon \sim 0$, this parabolic structure is stable on both $W^i$'s and when $\varepsilon \to 0$,
it tends to either $\boldsymbol{l}^1$, or $\boldsymbol{l}^2$, depending on the chamber.

We can easily deduce any other non separating phenomenon applying back elementary transformations.
For instance, the wall $H_0(1,\emptyset)$ (i.e. $I_1=\emptyset$) is separating the locus of stability of
\begin{itemize}
\item those parabolic structure on the non trivial bundle $E=\mathcal O_{X}(-1)\oplus\mathcal O_{X}(1)$;
\item those parabolic structure on the trivial bundle $E=\mathcal O_{X}\oplus\mathcal O_{X}$ where all parabolics
lie along the same $\mathcal O_{X}(-1)\hookrightarrow E$.
\end{itemize}
This provides a non separated phenomenon: former parabolic bundles are arbitrary close to latter ones
and {\it vice-versa}. Indeed, after applying elementary transformation to, say, $l_1$ and $l_2$, we are
back to a special case of the above discussion.

\subsection{The coarse moduli space of undecomposable quasi-parabolic structures on the trivial bundle}\label{subsection:trivialbundle}

Now we describe (following and completing  \cite[Section 2.3]{ArinkinLysenko1}) the coarse moduli space $P_0$ of undecomposable quasi-parabolic bundles $(E,\boldsymbol{l})$ on $(\pc, \bt)$ of rank $2$ and of degree $0$. 
The cases of all even degrees are similar after twisting by a convenient line bundle.

As suggested by the proof of Proposition \ref{prop:undecomposable=stable}, the coarse moduli space $P_0(\bt)$   is covered by open charts of the following type.

For $\{i,j,k\}\subset\{1,\ldots,n\}$, consider the moduli space of stable parabolic bundles of degree $0$
with respect to weights $\boldsymbol{w}$ defined by 
$$0<w_i=w_j=w_k<\frac{2}{3}\ \ \ \text{and other}\ \ \ w_l=0.$$
Such parabolic bundles are exactly given by those parabolic structures on the trivial bundle 
$E=\mathcal O_{X}\oplus\mathcal O_{X}$ 
such that $l_i$, $l_j$ and $l_k$ are pairwise distinct (through the trivialization of $E$).
Indeed, it cannot be $E=\mathcal O_{X}(-1)\oplus\mathcal O_{X}(1)$ for instance, since in this case
$\mathcal O_{X}(1)$ is destabilizing (taking weights into account). Also, on $E=\mathcal O_{X}\oplus\mathcal O_{X}$, 
$l_i\not=l_j$ otherwise the trivial line bundle $\mathcal O_{X}\hookrightarrow E$ that contains these
directions would be destabilizing. Here we get a fine moduli space that can be described as follows.
Choose a trivialization $\mathbb C^2$ of $E$ such that $l_i=(1:0)$, $l_j=(1:1)$ and $l_k=(0:1)$.
Then our moduli space identifies with
$$U_{i,j,k}=\left\{\boldsymbol{l}\ ;\ 
\begin{matrix}l_i=(1:0),\\ l_j=(1:1),\\ l_k=(0:1)\end{matrix}\ \ \ \text{and other}\ l_l\in\pc\ \text{arbitrary}\right\}\simeq\left(\pc\right)^{n-3}.
$$

Let $P_{0,0}(\bt)$ denote the moduli subspace of $P_{0}(\bt)$ of  
$(E, \bl)$ with the trivial vector bundle $E \simeq \Oo \oplus \Oo$.   
To get all points of $P_{0,0}(\bt)$, we have to patch 
all these projective smooth charts $U_{i,j,k}$ together: any two of them intersect on a non empty open subset.
We already obtain a non separated scheme. For $n=4$, we obtain $\pc$ with $3$ double points 
at $0$, $1$ and $\infty$, or equivalently, two copies of $\pc$ glued outside $0$, $1$ and $\infty$.
For $n=5$, gluing maps are birational and non separated phenomena increase: there are rational curves 
arbitrary close to points. For $n=6$, there are rational curves arbitrary close to each other through a flop.

Usually, the GIT compactification $\overline{M(0,n)}$ of the moduli of $n$-punctured sphere is constructed by setting all $w_i=1/n$. When $n$ is even, this weight is along 
the walls $H(0,I_1)$ with $\# I_1=\frac{n}{2}$ and there are strictly semi-stable (and decomposable) bundles.
On the other hand, when $n$ is odd, the weight is inside a chamber. For instance, for $n=5$, we get the $3$ blow-up of $\pc\times\pc$ at the points $(0,0)$, $(1,1)$ and $(\infty,\infty)$. Although this latter moduli space does not embed
in any chart $U_{i,j,k}\simeq \pc\times\pc$ considered above, it embeds in the total coarse moduli space 
$P_{0}(\bt)$ as an open subset.

\subsection{The coarse moduli space of undecomposable quasi-parabolic structures on degree $d$ bundles}

All charts $U_{i,j,k}$ are not enough to cover all quasi-parabolic bundles $(E,\boldsymbol{l})$ of degree $0$: we only get
those for which $E$ is the trivial bundle. We have to add other charts that can be deduced from previous ones
by making any even number of elementary transformations (and twisting by the convenient line bundle).
All in all, it is enough to consider the following set of weights
\begin{equation}\label{ExhaustiveWeights}
W:=\left\{\boldsymbol{w}\ ;\ \ 
\begin{matrix}$3$  \text{ of } w_i\text{'s are } \frac{1}{2}\\ \text{other } w_i\text{'s are } 0 \text{ or } 1\end{matrix}\right\}
\end{equation}
and the corresponding moduli spaces, all isomorphic to $\left(\pc\right)^{n-3}$. In fact, those $\boldsymbol{w}\in W$ for which $1$ does not occur are exactly those charts $U_{i,j,k}$ above; other ones are deduced by even numbers
of elementary transformations. Recall that stability and flatness are invariant under elementary transformations.

Let us set
$$
P_{d, k}(\bt) = \{ (E, \bl) \in P_{d}(\bt) \ | \ E \simeq \Oo(k) \oplus \Oo(d-k) \}
$$
Then we have a stratification $P_{0}(\bt)=P_{0, 0}(\bt) \sqcup P_{0,1}(\bt) \sqcup\cdots\sqcup P_{0, m}(\bt)$, where $m$ is positive integer 
maximal such that $m \le \frac{n-2}{2}$. All $P_{d, k}(\bt)$ with $k>0$ are on the non separated locus of $P_{0}(\bt)$. The open separated locus $P_{0}(\bt)^0$ (generic bundles) is, inside $P_{0,0}(\bt)$,
the complement of those parabolic structures for which a  
subbundle $L\hookrightarrow \mathcal O_{X}\oplus\mathcal O_{X}$ passes through an exceeding number of parabolics.

From the consideration as above, we can see that in the patching (\ref{eq:patch}) $P_{0}(\bt) = \cup_{i} P_0^{\bw_i} (\bt)$,  the charts
 $P_0^{\bw_i} (\bt)$ with $\bw_i \in W$ given by (\ref{ExhaustiveWeights}) are enough to cover the whole coarse moduli space.

We can promtly deduce the coarse moduli space $P_{-1}(\bt)$ of quasi-parabolic bundles  $(E,\boldsymbol{l})$
of degree $-1$  from the previous discussion by applying a single elementary transformation, 
say at $t_n$
$$\mathrm{Elm}_{t_n}^-:P_{0}(\bt)\stackrel{\sim}{\lra} P_{-1}(\bt).$$
We get a stratification 
$P_{-1}(\bt)=P_{-1, 0}(\bt) \sqcup P_{-1, 1}(\bt) \sqcup\cdots\sqcup P_{-1, m}(\bt)$, 
where $m$ is  maximal such that  $m \le \frac{n-3}{2}$. 

\subsection{A natural projective chart for coarse moduli space of degree $-1$ bundles}\label{subsection:chartV}

A natural projective chart $V$ is given by those undecomposable parabolic structures on $E=\mathcal O_{X}\oplus\mathcal O_{X}(-1)$ where no parabolic lie on $\mathcal O_{X}$.

\begin{proposition}\label{Prop:OurMainChart}
Assume $n\ge 3$.
For ``democratic'' weights $w_i=w$, $i=1,\ldots,n$, with $\frac{1}{n}<w<\frac{1}{n-2}$,
a degree $-1$ parabolic bundle $(E,\boldsymbol{l})$ is (semi-)stable if, and only if
\begin{itemize}
\item  $E=\mathcal O_{X}\oplus\mathcal O_{X}(-1)$,
\item no parabolic $l_i$ lie on $\mathcal O_{X}$,
\item not all $l_i$ lie on the same $\mathcal O_{X}(-1)$ (flatness).
\end{itemize}symmetry
Moreover, for these weights, the moduli space $V:=P_{-1}^{\bw}(\bt)$ is naturally isomorphic to 
$\mathbb PH^0(X,L\otimes\Omega_X^1(D))^*\simeq\pp_{\cc}^{n-3}$, where $L=\mathcal O_{X}(-1)$.
\end{proposition}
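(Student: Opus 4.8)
The plan is to treat the two assertions of the proposition separately: first the stability characterisation (1)--(3), then the identification $V=P_{-1}^{\bw}(\bt)\cong\pp H^0(X,L\otimes\Omega^1_{X}(D))^{\ast}$. For the stability part I would simply feed the democratic weights $w_i=w$ into the index (\ref{def:StabIndex}): for a line subbundle $F\subset E$ of degree $k$ meeting exactly $p$ of the parabolics (i.e. $l_i=F_{|t_i}$ for $p$ indices) one gets $\mathrm{Stab}(F)=-1-2k+(n-2p)w$. Running through the splitting types $E=\Oo(d_1)\oplus\Oo(d_2)$, $d_1\le d_2$, $d_1+d_2=-1$: if $d_2\ge1$ then $F=\Oo(d_2)$ gives $\mathrm{Stab}(F)\le-3+nw<0$, since $nw<\tfrac{n}{n-2}\le2$ for $n\ge4$ (and $nw<3$ for $n=3$); hence $E$ must be $\Oo\oplus\Oo(-1)$, giving (1). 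For this $E$ the only potential destabilisers are the unique degree-$0$ subbundle $\Oo$, with $\mathrm{Stab}(\Oo)=-1+(n-2p)w$ which is positive iff $p=0$ (this uses $w>\tfrac1n$ and, for $p\ge1$, $(n-2)w<1$), forcing (2); and the degree-$(-1)$ subbundles $L'\cong\Oo(-1)$, with $\mathrm{Stab}(L')=1+(n-2p)w$ which fails only in the extremal case $p=n$ (again by $w>\tfrac1n$), forcing (3). Since the interval $\tfrac1n<w<\tfrac1{n-2}$ is exactly the open chamber bounded by the walls $w=\tfrac1n$ and $w=\tfrac1{n-2}$ and contains no other wall, semistable $=$ stable here.

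For the identification I would fix $E=\Oo\oplus\Oo(-1)$ and set $N:=E/\Oo\cong\Oo(-1)=L$, with quotient $\pi\colon E\to N$. Because $l_i\not\subset\Oo$, the map $\pi_{|t_i}$ sends $l_i$ isomorphically onto $N_{|t_i}$, so with respect to a splitting $E=\Oo\oplus N$ the line $l_i$ is the graph of a unique $\phi_i\in\Hom(N_{|t_i},\Oo_{|t_i})=(\Oo(1))_{|t_i}$. Thus the quasi-parabolic data is recorded by $(\phi_i)_i\in H^0(\Oo(1)|_D)\cong\cc^{n}$. I would then compute the action of $\Aut(E)$: writing $g=\left(\begin{smallmatrix}a&P\\0&d\end{smallmatrix}\right)$ with $a,d\in\cc^{\ast}$ and $P\in\Hom(N,\Oo)=H^0(\Oo(1))$, a direct reparametrisation of the graph gives $(\phi_i)\mapsto\lambda(\phi_i)+\tfrac1d\,\mathrm{res}_D(P)$ with $\lambda=a/d$. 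Hence two data give isomorphic parabolic bundles exactly when they differ by a scalar $\lambda$ and by a translation along the $2$-dimensional subspace $T:=\mathrm{res}_D\bigl(H^0(\Oo(1))\bigr)\subset H^0(\Oo(1)|_D)$. The point is that $(\phi_i)\in T$ precisely when all $l_i$ lie on a common $\Oo(-1)\hookrightarrow E$, which is the non-flat locus ruled out by (3); so on the stable locus the class is a nonzero vector of $H^0(\Oo(1)|_D)/T$, well defined up to scalar.

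Therefore $V\cong\pp\bigl(H^0(\Oo(1)|_D)/T\bigr)$, and it remains to identify this cokernel with $H^0(L\otimes\Omega^1_{X}(D))^{\ast}$. I would use the restriction sequence $0\to\Oo(1-D)\to\Oo(1)\to\Oo(1)|_D\to0$: since $n\ge2$ gives $H^0(\Oo(1-D))=0$ and $H^1(\Oo(1))=0$, its long exact sequence reads
$$
0\to H^0(\Oo(1))\xrightarrow{\mathrm{res}_D}H^0(\Oo(1)|_D)\to H^1(\Oo(1-D))\to0,
$$
so $H^0(\Oo(1)|_D)/T\cong H^1(\Oo(1-D))$. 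Serre duality on $\pp^1$ gives $H^1(\Oo(1-D))\cong H^0(\Oo(n-3))^{\ast}$, and since $L\otimes\Omega^1_{X}(D)=\Oo(-1)\otimes\Oo(-2)\otimes\Oo(D)=\Oo(n-3)$, this is exactly $H^0(L\otimes\Omega^1_{X}(D))^{\ast}$. Projectivising yields the asserted natural isomorphism $V\stackrel{\sim}{\lra}\pp H^0(X,L\otimes\Omega^1_{X}(D))^{\ast}\cong\pp^{\,n-3}_{\cc}$.

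The stability bookkeeping is routine; the real content lies in the identification. The delicate point is to verify that $(E,\bl)\mapsto[(\phi_i)]$ is genuinely canonical—that the sole ambiguity is the change of splitting $T$ together with the scalar $\lambda$—and, more seriously, that this bijection upgrades to an isomorphism of schemes rather than a mere bijection on points. I expect to settle the latter by exhibiting the tautological family of parabolic bundles over $\pp\bigl(H^0(\Oo(1)|_D)/T\bigr)$ and invoking the coarse (or fine) moduli property of $P_{-1}^{\bw}(\bt)$ from Section~\ref{sec:GITbundle}, using smoothness on both sides. Equally, one must keep careful track of the twists so that $\Omega^1_{X}(D)$ emerges with its correct modular meaning through Serre duality, rather than as a bare $\Oo(n-3)$; this is the one computation that should be carried out with real care.
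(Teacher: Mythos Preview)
Your proposal is correct. The stability analysis is essentially the paper's own argument, carried out with a bit more care (the paper condenses your case analysis into three sentences, recording only the critical thresholds $w>\tfrac1n$, $w<\tfrac1{n-2}$, and $w<\tfrac3n$).

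For the identification $V\cong\pp H^0(X,L\otimes\Omega_X^1(D))^*$ the two arguments reach the same $H^1$-group but package it differently. The paper observes directly that a parabolic bundle in $V$ is the same thing as a nontrivial \emph{parabolic extension}
\[
0\to(\cO_X,\emptyset)\to(E,\bl)\to(L,D)\to0,
\]
writes down the transition cocycle, and reads off the extension class in $H^1(X,L^{-1}(-D))\cong H^0(X,L\otimes\Omega_X^1(D))^*$; two nontrivial classes give isomorphic parabolic bundles iff they are proportional. Your route fixes a splitting, records the parabolics as graph data $(\phi_i)\in H^0(\cO(1)|_D)$, quotients by the explicit $\Aut(E)$-action, and then identifies the quotient with $H^1(\cO(1-D))$ via the restriction sequence. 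Since $L^{-1}(-D)=\cO(1-D)$, the target cohomology group is literally the same; your restriction long exact sequence is exactly the computation of the parabolic $\mathrm{Ext}^1$ the paper is invoking, and the ``change of splitting'' subspace $T$ you mod out is the coboundary image. So the difference is one of presentation: the paper's argument is a line shorter and more conceptual, yours makes the $\Aut(E)$-action and the role of condition~(3) (all $l_i$ on one $\cO(-1)$ $\Leftrightarrow$ $(\phi_i)\in T$) more transparent. Your closing worry about upgrading the bijection to a scheme isomorphism is legitimate, and the paper is equally silent on this point.
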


\begin{proof}
That $\mathcal O_{X}$ free of parabolics does not destabilize the parabolic bundle is equivalent to $\frac{1}{n}<w$.
On the other hand, for $w<\frac{1}{n-2}$, a $\mathcal O_{X}(-1)$ passing through $n-1$ parabolics does not
destabilize, but the parabolic bundle becomes unstable whenever one parabolic lie on $\mathcal O_{X}$.
Finally, to eliminate parabolic structures on degree $-1$ vector bundles $E\not=\mathcal O_{X}\oplus\mathcal O_{X}(-1)$, 
we just need $w<\frac{3}{n}$ which is already implied by the above inequalities provided that  $n\ge 3$.

Parabolic bundles of the chart $V$ are precisely non trivial \emph{extensions}
$$0\to(\mathcal O_X,\emptyset)\to (E,\bl)\to (L,D)\to 0$$
which means that the pair is defined by gluing local models $U_i\times\mathbb C^2$
(for an open analytic covering $(U_k)$ of $X$) by transition matrices 
$$M_{kl}=\begin{pmatrix}1 & b_{kl}\\ 0 & a_{kl}\end{pmatrix}$$
with $b_{kl}$ vanishing on $D$. Here, on each chart $U_k$, the vector $e_1$ 
generates the trivial subbundle $\mathcal O_X\hookrightarrow E$ and $e_2$ 
gives the parabolic direction over each point of $D$.

The multiplicative cocycle $(a_{kl})_{kl}\in H^1(X,\mathcal O_X^*)$ defines the line bundle $L$.
Let $a_{kl}=\frac{a_k}{a_l}$ be a meromorphic resolution: $a_i$ is meromorphic on $U_k$ with $\mathrm{div}(a_k)=\mathrm{div}(L)$.
The obstruction to split the extension is measured by an element of 
$$H^1(X, \mathrm{Hom}(L(D),\mathcal O_X))=H^1(X,L^{-1}(-D))=H^0(X,L\otimes\Omega_X^1(D))^*$$
(by Serre duality) which is explicitely given by $(b_{kl}a_l)_{kl}\in H^1(X,L^{-1}(-D))$. Any two non trivial extensions define isomorphic
parabolic bundles if and only if the corresponding cocycles are proportional: the moduli space
of extensions is parametrized by $\mathbb PH^0(X,L\otimes\Omega_X^1(D))^*$.
\end{proof}

\subsection{Case $n=4$ detailled}\label{subsec:n=4detailled}

For degree $0$ and undecomposable parabolic bundles, we have the following possibilities:
\begin{itemize}
\item $E$ is the trivial bundle and at most two of the $l_i$'s coincide;
\item $E=\mathcal O_{X}(-1)\oplus\mathcal O_{X}(1)$ and in this case, there is a unique
undecomposable quasi-parabolic structure up to automorphism, say $l_1,l_2,l_3\in\mathcal O_{X}(-1)$
and $l_4$ outside of the two factors.
\end{itemize}
On the space $[0,1]^4$ of weights, the walls are defined by equations of the type 
$$\epsilon_1 w_1+\cdots+\epsilon_4 w_4\in2\mathbb Z$$
where $\epsilon_i=\pm$ and we get the following possibilities (other ones do not cut out $[0,1]^4$ into two non empty pieces)
$$\begin{matrix}
w_1+w_2+w_3+w_4&=&2\\
w_i+w_j+w_k-w_l&=&0\text{ or }2\\
w_i+w_j-w_k-w_l&=&0
\end{matrix}$$
where $\{i,j,k,l\}=\{1,2,3,4\}$, which gives $1+2\cdot4+3=12$ walls. It is easy to check that the moduli space of
(semi-)stable parabolic bundles is non empty if, and only if, we have the following inequalities
$$0\le w_i+w_j+w_k-w_l\le 2.$$
For instance, when $w_1+w_2+w_3<w_4$, then the line bundle $\mathcal O_{X}\hookrightarrow E$ passing through
$l_4$ destabilizes the bundle.

Now, under above inequalities, the remaining $4$ walls cut out the remaining space of weights into $16$ chambers.
For $w_1+w_2+w_3+w_4<2$, the moduli space $P_0^{\bw}(\bt)$ consists only of parabolic structures 
on the trivial bundle: $\mathcal O_{X}(1)\hookrightarrow\mathcal O_{X}(-1)\oplus\mathcal O_{X}(1)$ is destabilizing in this case.
This half-space splits into $8$ admissible chambers, but only $4$ are enough to cover all quasi-parabolic structures on $\mathcal O_{X}\oplus\mathcal O_{X}$, namely those containing $\boldsymbol{w}_4=(\frac{1}{2},\frac{1}{2},\frac{1}{2},0)$, and its permutations $\boldsymbol{w}_i$ (the $i$th weight is zero).
For $\boldsymbol{w}_4$, we get the following chart
$$U_{1,2,3}:=\{\boldsymbol{l}=(0,1,\infty,u)\ ;\ u\in\pc\}.$$
The classical moduli space $M(0,4)$ is given by the open set $u\not=0,1,\infty$
and this is the locus $P_0(\bt)^0$ of generic parabolic bundles.
The chart given by $\boldsymbol{w}_1$ can be for instance described as
$$U_{4,2,3}:=\{\boldsymbol{l}=(v,1,\infty,0)\ ;\ v\in\pc\}.$$
The intersection is given, in $U_{1,2,3}$, by the complement of $l_4=l_2$ and $l_4=l_3$,
i.e. by $u\not=1,\infty$. The two projective charts glue along the latter open subset through
the fractional linear transformation $U_{1,2,3}\to U_{4,2,3};u\mapsto v=\frac{u}{u-1}$.
We have already added two non separated points, namely at $u=1$ and $u=\infty$. 

After patching all $4$ charts together, we get a non separated scheme over $\pc\ni u$
with double points over $u=0$, $1$ and $\infty$; they correspond to pairs
of special parabolic structures respectively defined by
$$\{l_1=l_4\text{ or }l_2=l_3\},\ \ \ \{l_2=l_4\text{ or }l_1=l_3\}\ \ \ \text{and}\ \ \ \{l_3=l_4\text{ or }l_1=l_2\}.$$
Finally, one has to add the unique undecomposable quasi-parabolic structure on the non trivial
bundle $\mathcal O_{X}(1)\oplus\mathcal O_{X}(-1)$. This adds a $4$th non separated point over $u=t$
where $\text{cross-ratio}(0,1,\infty,t)=\text{cross-ratio}(t_1,t_2,t_3,t_4)$. Indeed, it is
infinitesimally closed to the (unique) quasi-parabolic structure lying on an embedding $\mathcal O_{X}(-1)\hookrightarrow\mathcal O_{X}\oplus\mathcal O_{X}$.

To end with the degree $0$ case, we note that, although the coarse moduli space is constructed {\it a posteriori} by gluing 
two copies of $\pc$ along the complement of $t_1,t_2,t_3,t_4$. However, this identification strongly
depend on our choice of the initial chart $U_{1,2,3}\in u$. Starting from another chart will give another
identification; this is up to the $4$-group that preserves the cross-ratio.

In case of degree $-1$ bundles, we necessarily have $E=\mathcal O_{X}\oplus\mathcal O_{X}(-1)$ by undecomposability.
Let us choose weights $w_1=w_2=w_3=w_4=:w$. The moduli space $P_{-1}^{\bw}(\bt)$
is non empty for $\frac{1}{4}\le w\le \frac{3}{4}$. At $w=\frac{1}{2}$ only, strictly semistable bundles occur. 
There are two chambers, namely
\begin{itemize}
\item $\frac{1}{4}< w< \frac{1}{2}$ where no parabolic $l_i$ is contained in $\mathcal O_{X}$;
\item $\frac{1}{2}< w< \frac{3}{4}$ where not $3$ of the $l_i$'s is contained in the same $\mathcal O_{X}(-1)$.
\end{itemize}
By this way, the coarse moduli space is constructed by only two open projective charts, and the four double points
are given by those pairs
$$\{l_i\text{ is contained in }\mathcal O_{X}\}\ \ \text{  and  }\ \ \{l_j,l_k,l_l\text{ are contained in the same }\mathcal O_{X}(-1)\}$$
which naturally identify with $t_i$.
Here, we get a natural identification with two copies of $\mathbb P^1$ glued along the complement of $t_1,t_2,t_3,t_4$.

\section{The two Lagrangian fibrations}

\subsection{Moduli of generic connections}
All along this section, we fix ``democratic''  weights 
\begin{equation}\label{eq:democratic}
\bw=(w,\ldots,w) \ \ \mbox{with} \ \ \frac{1}{n}<w<\frac{1}{n-2}
\end{equation}
like in Proposition \ref{Prop:OurMainChart} and we consider the moduli space 
$M^{\bw}(\bt,\bnu)$ of $\bw$-stable $\bnu$-parabolic connections $(E,\nabla,\bl)$ 
where $\bnu=(\nu_i^{\pm})$ with $\sum_{i}(\nu_i^++\nu_i^-)=1$
(see Section \ref{Sect:ModuliConnection}). Denote by $L=\mathcal O_{X}(-1)$ the determinant
line bundle. 
By Proposition \ref{Prop:OurMainChart},  
for the weights $\bw=(w,\ldots,w)$ in (\ref{eq:democratic}), 
the coarse moduli space  $V=P^{\bw}_{-1}(\bt)$ of $\bw$-stable parabolc bundles of degree $-1$ 
is isomorphic to 
$PH^0(X,L\otimes\Omega_X^1(D))^*\simeq\pp_{\cc}^{n-3}$ and consists of $(E, \bl)$ 
satisfying the conditions:
\begin{itemize}
\item $E=\mathcal O_{X}\oplus\mathcal O_{X}(-1)$,
\item $l_i\not \subset \mathcal O_{X}$ for $i=1,\ldots,n$,
\item not all $l_i$ lie in the same $\mathcal O_X(-1)\hookrightarrow E$.
\end{itemize}
Now we introduce the following  open subset of the moduli space $M^{\bw}(\bt,\bnu)$
\begin{definition}\label{def:goodmoduli}{\rm  For the weight $\bw$ in (\ref{eq:democratic}), 
let us define the open subset 
\begin{equation}\label{eq:generic-con}
M^{\bw}(\bt,\bnu)^0 = \{ (E, \nabla, \bl) \in M^{\bw}(\bt,\bnu) \ | \  (E, \bl) \in P^{\bw}_{-1}(\bt) \}
\end{equation}
of $ M^{\bw}(\bt,\bnu)$, which we call the moduli space of {\em generic $\bnu$-parabolic connections}.}
\end{definition}
We can define two natural Lagrangian maps on $M^{\bw}(\bt,\bnu)^0$. The first one 
\begin{equation}\label{eq:app}
\mathrm{App}:M^{\bw}(\bt,\bnu)^0\to \mathbb PH^0(X,L\otimes\Omega_X^1(D))\simeq\vert\mathcal O_{X}(n-3)\vert \simeq \pp_{\ba}^{n-3}
\end{equation}
is obtained by taking the apparent singular points with respect to the cyclic vector 
of the global section of $\mathcal O_{X}$.
Precisely, each connection $\nabla$ on $E=\mathcal O_{X}\oplus\mathcal O_{X}(-1)$ defines
a $\mathcal O_{X}$-linear map
$$\mathcal O_{X}\hookrightarrow E\stackrel{\nabla}{\longrightarrow} E\otimes \Omega^1_{X}(D)\to 
(E/\mathcal O_X)\otimes\Omega_X^1(D)\simeq L\otimes \Omega^1_{X}(D)$$
(where the last arrow is the quotient by the subbundle defined by $\mathcal O_{X}\hookrightarrow E$) i.e. a map
$$\varphi_{\nabla}:\mathcal O_{X}\to L\otimes\Omega^1_{X}(D).$$
Its zero divisor is an element of the linear system $\mathbb PH^0(X,L\otimes\Omega_X^1(D))\simeq \vert\mathcal O_{X}(n-3)\vert$.
This map extends as a rational map 
$$\mathrm{App}:M^{\bw}(\bt,\bnu)\dashrightarrow \vert\mathcal O_{X}(n-3)\vert$$
on the whole moduli space with some indeterminacy points  (See \cite{SaitoSzabo}). 

The second Lagrangian map 
\begin{equation}\label{eq:bun}
\mathrm{Bun}:M^{\bw}(\bt,\bnu)^0\to P^{\bw}_{-1}(\bt) \simeq\mathbb PH^0(X,L\otimes\Omega_X^1(D))^* \simeq 
(\pp_{\ba}^{n-3})^{\ast} \simeq \pp_{\bb}^{n-3}.
\end{equation}
comes from the forgetfull map towards the coarse moduli space of undecomposable 
 parabolic bundles
$$\mathrm{Bun}:M^{\bw}(\bt,\bnu)\to P_{-1}(\bt)\ ;\ (E,\nabla,\bl)\mapsto (E,\boldsymbol{l})$$
that we restrict to the open projective chart $V:=P^{\bw}_{-1}(\bt)$
of Section $\ref{subsection:chartV}$.

One of main results of this section is the following

\begin{theorem}\label{thm:embedding}
Under the assumption that $\sum_i \nu_i^-\not=0$ ($\Leftrightarrow\sum_i \nu_i^+\not=1$), the morphism 
\begin{equation}
\mathrm{App}\times\mathrm{Bun}\ :\ M^{\bw}(\bt,\bnu)^0\ \to\ \vert\mathcal O_{X}(n-3)\vert\times\vert\mathcal O_{X}(n-3)\vert^*
\simeq \pp_{\ba}^{n-3} \times \pp_{\bb}^{n-3}
\end{equation}
is an embedding. Precisely, the image is the complement of the incidence variety 
$\Sigma$ for the above duality.
\end{theorem}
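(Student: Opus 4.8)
The plan is to use $\mathrm{Bun}$ to present $M^{\bw}(\bt,\bnu)^0$ as an affine bundle over $\pp_{\bb}^{n-3}=P^{\bw}_{-1}(\bt)$ and to analyse $\mathrm{App}$ fibrewise. Fix a generic parabolic bundle $(E,\bl)$ with $E=\mathcal{O}_{X}\oplus L$, $L=\mathcal{O}_{X}(-1)$, corresponding to a point $\bb\in\pp_{\bb}^{n-3}$; by Proposition \ref{Prop:OurMainChart} this point is the class of the nonsplit parabolic extension $\psi\in H^1(X,L^{-1}(-D))=H^0(X,L\otimes\Omega^1_{X}(D))^{*}$. Write $s_0:\mathcal{O}_{X}\hookrightarrow E$ and $\pi:E\to L$ for the canonical sub and quotient. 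The fibre $\mathrm{Bun}^{-1}(\bb)$ is the set of $\bnu$-parabolic connections carried by $(E,\bl)$; it is non-empty by flatness (Proposition \ref{prop:flat=undecomposable}) and is a torsor under the space $\Theta$ of strongly parabolic Higgs fields $\theta:E\to E\otimes\Omega^1_{X}(D)$ (residues nilpotent with respect to $\bl$), of dimension $n-3$. I would lift the apparent divisor to the honest section $\varphi_{\nabla}=\pi\circ\nabla\circ s_0\in H^0(X,L\otimes\Omega^1_{X}(D))$ (well defined once $s_0$ is normalised), so that $\mathrm{App}$ is its projective class in $\pp H^0(X,L\otimes\Omega^1_{X}(D))=\pp_{\ba}^{n-3}$.

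The first observation is that along a fibre the lift is affine-linear: $\varphi_{\nabla+\theta}=\varphi_{\nabla}+\bar\theta$ with $\bar\theta:=\pi\circ\theta\circ s_0$, so $\theta\mapsto\bar\theta$ is a linear map $\Theta\to H^0(X,L\otimes\Omega^1_{X}(D))$ and $\mathrm{App}|_{\mathrm{Bun}^{-1}(\bb)}$ is the projectivisation of the affine map $\nabla\mapsto\varphi_{\nabla}$. Recall that projectivising an affine hyperplane $\{\langle v,\xi\rangle=c\}$ with $c\neq0$ gives a bijection onto the complement of the projective hyperplane $\{\langle\ba,\xi\rangle=0\}$ of $\pp_{\ba}^{n-3}$. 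Thus everything comes down to identifying the image of $\nabla\mapsto\varphi_{\nabla}$ as an affine hyperplane of $H^0(X,L\otimes\Omega^1_{X}(D))\cong\cc^{n-2}$, namely as a translate of the linear hyperplane $\bb^{\perp}$ that does not pass through the origin.

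Both statements follow from a single residue computation, which is the technical heart of the argument. For any $\mathcal{O}_{X}$-linear $A:E\to E\otimes\Omega^1_{X}(D)$ I would compute the Serre-duality pairing $\langle \pi\circ A\circ s_0,\ \psi\rangle\in H^1(X,\Omega^1_{X})=\cc$ by representing $\psi$ through the meromorphic splitting cocycle of the extension (as in the proof of Proposition \ref{Prop:OurMainChart}) and summing residues over $D$; the expected outcome is
\[
\langle \pi\circ A\circ s_0,\ \psi\rangle = c\sum_{i=1}^{n}\mu_i,
\]
where $\mu_i$ is the residual eigenvalue induced by $A$ on the sub $\mathcal{O}_{X}$ at $t_i$ and $c\neq0$ is a universal constant. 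Applied to a strongly parabolic Higgs field $\theta$ (nilpotent residues, so $\mu_i=0$) this gives $\langle\bar\theta,\psi\rangle=0$, i.e. $\bar\theta\in\bb^{\perp}$; combined with injectivity of $\theta\mapsto\bar\theta$ (which I would deduce from genericity of $(E,\bl)$: $\bar\theta=0$ forces $\theta$ to preserve $\mathcal{O}_{X}$, and the strongly parabolic condition then forces $\theta=0$) and $\dim\Theta=n-3$, this shows the image of the linear part is exactly $\bb^{\perp}$. Applied to $\nabla$ itself, the induced residue on the quotient $L$ is $\nu_i^{+}$ (since $l_i\not\subset\mathcal{O}_{X}$ is the $\nu_i^{+}$-eigenline and maps isomorphically onto $L_{|t_i}$), hence $\mu_i=(\nu_i^{+}+\nu_i^{-})-\nu_i^{+}=\nu_i^{-}$; therefore $\langle\varphi_{\nabla},\psi\rangle=c\sum_i\nu_i^{-}=-c\rho$ is constant along the fibre and nonzero precisely when $\rho=-\sum_i\nu_i^{-}\neq0$, which is the hypothesis. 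The main obstacle will be to carry out this residue identity cleanly and fix the constant $c$.

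Granting the lemma, over each $\bb$ the map $\mathrm{App}$ is a bijection from $\mathrm{Bun}^{-1}(\bb)$ onto $\{\ba:\langle\ba,\bb\rangle\neq0\}$, that is, onto the fibre of $(\pp_{\ba}^{n-3}\times\pp_{\bb}^{n-3})\setminus\Sigma\to\pp_{\bb}^{n-3}$. Both $\mathrm{Bun}:M^{\bw}(\bt,\bnu)^0\to\pp_{\bb}^{n-3}$ and the second projection of the complement of $\Sigma$ are affine bundles of rank $n-3$, and $\mathrm{App}\times\mathrm{Bun}$ is an algebraic morphism between them over the identity of $\pp_{\bb}^{n-3}$ that is a fibrewise affine isomorphism; hence it is an isomorphism of affine bundles, and in particular an isomorphism $M^{\bw}(\bt,\bnu)^0\stackrel{\sim}{\to}(\pp_{\ba}^{n-3}\times\pp_{\bb}^{n-3})\setminus\Sigma$. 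The only inputs beyond the residue lemma are the injectivity of $\theta\mapsto\bar\theta$ and the tautological identification of the second component with $\mathrm{Bun}$, both of which are straightforward in the democratic chart $V$ of Proposition \ref{Prop:OurMainChart}.
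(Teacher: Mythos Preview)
Your proposal is correct, and the residue identity you single out as the technical heart is precisely what drives the paper's own proof (Step~2 together with the ``Locus of Higgs fields'' paragraph in the proof of Theorem~\ref{th:Main}): the \v{C}ech compatibility $\alpha_k-\alpha_l=(b_{kl}a_l)\gamma$ shows that the $(1,1)$-entry of the connection resolves the cocycle representing the Serre pairing $\langle\varphi_\nabla,\psi\rangle$, whence its value under the residue map is $\sum_i\nu_i^-$. Your injectivity argument for $\theta\mapsto\bar\theta$ is likewise the paper's alternate proof of Corollary~\ref{Coro:CanonicalParabolicMap}.

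Where the organization differs is in how bijectivity is extracted from this identity. The paper passes to the compactification $\overline{M^{\bw}(\bt,\bnu)^0}$ by $\lambda$-connections and proves an isomorphism there by \emph{constructing} the inverse: given $(\gamma,\psi)$ one solves the \v{C}ech system entry by entry (Steps~1--4), producing a unique $\lambda$-connection with $\lambda\ne0$ exactly off the incidence variety. Your route stays on the open moduli and instead exploits that $\nabla\mapsto\varphi_\nabla$ is affine with linear part landing in $\bb^\perp$ (by the residue identity applied to Higgs fields) and constant term off $\bb^\perp$ (by the identity applied to $\nabla$), so projectivising yields the complement of the hyperplane $\bb^\perp$. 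This is more direct for Theorem~\ref{thm:embedding} per se and avoids introducing $\lambda$-connections; the paper's detour through Theorem~\ref{th:Main}, on the other hand, simultaneously identifies the boundary $M^{\bw}(\bt,\bnu)^0_H$ with $\Sigma$ and yields the projective compactification, which is content they need later.
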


\subsection{Compactification of the moduli space}\label{sec:compactmoduli}

In order to prove Theorem \ref{thm:embedding},  
we introduce a nice compactification $\overline{M^{\bw}(\bt, \bnu)^{0}}$ of the moduli space  
$M^{\bw}(\bt,\bnu)^0$ of generic connections 
and will show that  the extended map $\mathrm{App}\times\mathrm{Bun}$ to  $\overline{M^{\bw}(\bt, \bnu)^{0}}$ 
is in fact an isomorphism.  

In [Definition 2, \cite{Arinkin}], 
the moduli stack $\overline{\Mm (\bt, \bnu)}$ of 
$\lambda$-$\bnu$-parabolic connections $(E, \nabla, \varphi,  \lambda 
\in \cc, \bl)$ over $X = \pp^{1}$ 
are introduced. (Note that in \cite{Arinkin}, $\lambda$-$\bnu$-parabolic 
connections are called as  $\epsilon$-bundles.) 
Then under the conditions that $(E, \nabla)$ is irreducible, Arinkin   ([Theorem 1 in  
\cite{Arinkin}]) showed that the moduli stack  $\overline{\Mm(\bt, \bnu)}$ is a 
complete smooth Deligne-Mumford stack.  Moreover he also showed that 
the $\lambda=0$ locus $\overline{\Mm}(\bt, \bnu)_{H} \subset \overline{\Mm(\bt, \bnu)} $, 
which is the moduli stack of parabolic Higgs bundles, is  also a smooth 
algebraic stack. On the other hand, as remarked in the proof of [Proposition 7, \cite{Arinkin}], 
the coarse moduli space $\overline{M(\bt, \bnu)}$ corresponding to $\overline{\Mm(\bt, \bnu)}$is not smooth: it has  quotient singularities.  
(As for the possible smooth compactification by $\phi$-parabolic connections, one may refer  
\cite{InabaIwasakiSaito} and \cite{InabaIwasakiSaito2} (cf. Remark \ref{rem:smoothness}).)   
 
Our main strategy is to consider the coarse moduli space of 
$\bw$-stable  $\lambda$-$\bnu$-parabolic connections for the democratic weight $\bw$. Define 
the coarse moduli space   
\begin{equation}\label{eq:compactification}
\overline{M^{\bw}(\bt, \bnu)^{0}} := \left\{ 
\begin{array}{l}(E, \nabla, \lambda \in \cc, \bl) \\ 
\mbox{$\lambda$-$\bnu$-parabolic connection} 
\end{array}
\ | \ \mbox{ $ (E, \bl) \in  
 P^{\bw}_{-1}(\bt)$ }  \right\}/ \simeq . 
\end{equation}
Note that if  
$(E, \bl) \in P^{\bw}_{-1}(\bt)$, $\lambda$-parabolic connections 
$(E, \nabla, \lambda \in \cc, \bl)$ are always $\bw$-stable, and 
there exists a natural embedding $ M^{\bw}(\bt, \bnu)^{0} \subset \overline{M^{\bw}(\bt, \bnu)^{0}}$ 
such that 
\begin{equation}\label{eq:higgs-locus}
M^{\bw}(\bt, \bnu)_H^0:= \overline{M^{\bw}(\bt,\bnu)^0}\setminus M^{\bw}(\bt,\bnu)^0
\end{equation}
is the coarse moduli space of  parabolic Higgs bundles $(E, \nabla, 0 \in \cc, \bl)$ such that 
$(E, \bl) \in P^{\bw}_{-1}(\bt)$.  

We can describe the moduli space $\overline{M^{\bw}(\bt,\bnu)^0}$ 
naively as follows.
 Thinking of $\mathrm{Bun}:M^{\bw}(\bt,\bnu)^0\to P^{\bw}_{-1}(\bt)$ as an affine $\mathbb A^{n-3}$-bundle
over the projective chart $P^{\bw}_{-1}(\bt)$. 
On each parabolic bundle $(E,\bl)\in P^{\bw}_{-1}(\bt)$, any two connections $\nabla_0,\nabla_1$ compatible with $\bl$
differ to each other by a parabolic Higgs field 
$$\nabla_1-\nabla_0=\Theta\in H^0(\mathrm{End}(E,\boldsymbol{l})\otimes \Omega^1_{X}(D))$$
(residues of $\Theta$ are nilpotent on each fiber $E_{\vert t_i}$ fixing the parabolic direction $l_i$).  
The moduli space of connections identifies with the $(n-3)$-dimensional affine space 
$\nabla_0+H^0(\mathrm{End}(E,\boldsymbol{l})\otimes \Omega^1_{X}(D))$
(recall that $(E,\bl)$ is simple).  Let us consider the fiber 
$\mathrm{Bun}^{-1}(E, \bl)$ of the map $\mathrm{Bun}:M^{\bw}(\bt,\bnu)^0\to P^{\bw}_{-1}(\bt) $ in (\ref{eq:bun}) over $(E, \bl)$.  
A natural compactification of the fiber $\mathrm{Bun}^{-1}(E, \bl)$  is given by 
$$\overline{\mathrm{Bun}^{-1}(E, \bl)}:=
\mathbb P\left( \cc\cdot\nabla_0\oplus H^0(\mathrm{End}(E,\boldsymbol{l})\otimes \Omega^1_{X}(D)) \right).
$$
An element $\nabla:=\lambda \cdot\nabla_0+\Theta$ is a $\lambda$-connection; if $\lambda\not=0$,
it is homothetic equivalent to a unique connection, namely $\frac{1}{\lambda}\nabla$; if $\lambda =0$, 
it is a parabolic Higgs field. 
By this way, we compactify the fiber $\mathrm{Bun}^{-1}(E, \boldsymbol{l})$ by adding
$\mathbb PH^0(\mathrm{End}(E,\boldsymbol{l})\otimes \Omega^1_{X}(D))$.  
Varying $(E, \bl) \in  P^{\bw}_{-1}(\bt) $ and choose a local section $\nabla_0$ over  local 
open sets of $P^{\bw}_{-1}(\bt) $, 
we can construct a $\mathbb P^{n-3}$-bundle 
\begin{equation}\label{eq:c-bun}
\mathrm{Bun}:\overline{M^{\bw}(\bt,\bnu)^0} \lra V=P^{\bw}_{-1}(\bt)
\end{equation}
and its restriction to the boundary
\begin{equation}\label{eq:bd-bun}
\mathrm{Bun}_H: M^{\bw}(\bt, \bnu)_H^0:= (\overline{M^{\bw}(\bt,\bnu)^0}\setminus M^{\bw}(\bt,\bnu)^0)\lra V
\end{equation}
naturally identifies with the total space of the projectivized cotangent bundle $\mathbb PT^*V\to V$.


\subsection{Main Theorem}
The apparent map naturally extends on the compactification since $\varphi_\nabla$ 
can be defined in the same way for $\lambda$-connections (an Higgs fields).

Our main result, which will give a proof of Theorem \ref{thm:embedding}, now reads

\begin{theorem}\label{th:Main} We  fix the democratic weight $\bw=(w, \ldots, w)$ with 
$\frac{1}{n} < w < \frac{1}{n-2}$ and consider the moduli space $\overline{M^{\bw}(\bt, \bnu)^0}$ as in 
(\ref{eq:compactification}).  
If $\sum_i \nu_i^-\not=0$, the moduli space $\overline{M^{\bw}(\bt, \bnu)^0}$ is a smooth 
projective variety and 
the map $\mathrm{App}\times\mathrm{Bun}$ induces an isomorphism
\begin{equation}\label{eq:isomorphism}
\mathrm{App}\times\mathrm{Bun}:\overline{M^{\bw}(\bt,\bnu)^0}\stackrel{\sim}{\lra}\mathbb PH^0(X,L\otimes\Omega_X^1(D))\times\mathbb PH^0(X,L\otimes\Omega_X^1(D))^*.
\end{equation}
Moreover, by restriction, we also obtain the isomorphism 
\begin{equation}\label{eq:isom-boundary}
\mathrm{App}\times\mathrm{Bun}_{|M^{\bw}(\bt, \bnu)_H^0} : M^{\bw}(\bt, \bnu)_H^0\stackrel{\sim}{\lra} \Sigma
\end{equation}
where $\Sigma$  is the incidence variety for the duality.  
\end{theorem}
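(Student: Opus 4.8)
The plan is to establish the isomorphism \eqref{eq:isomorphism} by analyzing the two maps $\App$ and $\Bun$ on the compactification $\overline{M^{\bw}(\bt,\bnu)^0}$ and showing that together they give a bijective morphism between smooth projective varieties, then invoking Zariski's main theorem (or simply that a bijective morphism between smooth varieties with the right dimension count is an isomorphism). First I would fix a parabolic bundle $(E,\bl)\in V=P_{-1}^{\bw}(\bt)$ with $E=\cO_X\oplus\cO_X(-1)$, and work out explicit coordinates. By Proposition \ref{Prop:OurMainChart}, $V\simeq\pp_{\bb}^{n-3}=\pp H^0(X,L\otimes\Omega_X^1(D))^*$, so $\Bun$ already realizes the projective bundle structure \eqref{eq:c-bun}. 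The core computation is to understand the restriction of $\App$ to a single fiber $\overline{\Bun^{-1}(E,\bl)}\simeq\pp^{n-3}$. On this fiber the $\lambda$-connection is $\nabla=\lambda\nabla_0+\Theta$, and the apparent map sends it to the zero divisor of $\varphi_\nabla\in H^0(X,L\otimes\Omega_X^1(D))$. The key point is that $\varphi_{\lambda\nabla_0+\Theta}$ depends \emph{affinely} on $(\lambda,\Theta)$, hence $\App$ restricted to the fiber is the projectivization of a linear map $\cc\cdot\nabla_0\oplus H^0(\mathrm{End}(E,\bl)\otimes\Omega^1_X(D))\to H^0(X,L\otimes\Omega_X^1(D))$.

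The heart of the argument is to show that this linear map is an isomorphism of vector spaces (both have dimension $n-2$), so that $\App$ restricts to an isomorphism $\overline{\Bun^{-1}(E,\bl)}\stackrel{\sim}{\to}\pp_{\ba}^{n-3}$ on each fiber. I would compute $\varphi_\nabla$ explicitly: writing the cyclic vector as the section of $\cO_X\hookrightarrow E$, the image $\varphi_\nabla$ is obtained by composing with $\nabla$ and projecting to $L\otimes\Omega_X^1(D)$. For the Higgs part ($\lambda=0$), $\varphi_\Theta$ records how $\Theta$ maps the $\cO_X$-direction to the $\cO_X(-1)$-direction, and since $\Theta$ ranges over strongly parabolic Higgs fields (nilpotent residues preserving the $l_i$), the condition $l_i\not\subset\cO_X$ guarantees that this assignment is injective on the Higgs piece. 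The $\lambda$-direction contributes the single extra dimension coming from $\nabla_0$ itself, and the hypothesis $\sum_i\nu_i^-\neq0$ (equivalently $\rho\neq0$) is exactly what guarantees that $\nabla_0$ does not already lie in the image of the Higgs fields — that is, it ensures $\varphi_{\nabla_0}$ is \emph{not} a linear combination of the $\varphi_\Theta$, so the full map is surjective. This is where the nonvanishing of $\rho$ enters crucially and is the main obstacle: I expect the delicate part to be verifying this non-degeneracy uniformly over all $(E,\bl)\in V$, i.e. controlling the residue/trace computation that produces the coefficient $\rho$.

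Granting fiberwise isomorphism, I would assemble the global statement as follows. The map $\App\times\Bun$ is a morphism of $\pp^{n-3}$-bundles over $V$ (via $\Bun$) to the trivial $\pp_{\ba}^{n-3}$-bundle over $\pp_{\bb}^{n-3}=V$; being a fiberwise isomorphism of projective bundles and compatible with the base identification $V\simeq\pp_{\bb}^{n-3}$, it is an isomorphism onto $\pp_{\ba}^{n-3}\times\pp_{\bb}^{n-3}$. This simultaneously proves smoothness and projectivity of $\overline{M^{\bw}(\bt,\bnu)^0}$, since it is identified with a smooth projective variety. For the boundary statement \eqref{eq:isom-boundary}, I would restrict to $\lambda=0$: by \eqref{eq:bd-bun} the Higgs locus is $\pp T^*V$, and tracing through the fiberwise linear-algebra identification shows that a Higgs bundle with $\Bun$-image $\bb$ maps under $\App$ precisely to those $\ba$ satisfying $\sum_k a_k b_k=0$, since the strongly parabolic Higgs fields span the hyperplane in $H^0(X,L\otimes\Omega_X^1(D))$ orthogonal to the point $\bb\in\pp_{\bb}^{n-3}$. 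Thus the image of the boundary is exactly the incidence variety $\Sigma$, giving \eqref{eq:isom-boundary}. The duality pairing $\sum_k a_k b_k$ emerges naturally from the Serre-duality pairing between $H^0(X,L\otimes\Omega_X^1(D))$ and its dual used in Proposition \ref{Prop:OurMainChart}.
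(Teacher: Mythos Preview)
Your strategy is essentially the paper's: analyze $\App$ fiberwise over $\Bun$ as the projectivization of a linear map, check it is a linear isomorphism on each fiber, and assemble. The paper organizes this slightly differently---it constructs the \emph{inverse} directly by writing a $\lambda$-connection in cocycle form and solving step by step for the entries of the connection matrix given the extension class $(b_{kl}a_l)\in H^1(X,L^{-1}(-D))$ and the apparent section $\gamma\in H^0(X,L\otimes\Omega_X^1(D))$---but the content is the same, and your argument that $\rho\neq 0$ prevents a connection with $\varphi_\nabla=0$ (i.e.\ one fixing $\cO_X$) is exactly the obstruction the paper exploits.

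There is, however, one genuine gap in your boundary identification. You assert that for $(E,\bl)$ with extension class $\bb$, the image of the Higgs fields under $\varphi$ is the hyperplane $\{\sum a_kb_k=0\}$. But a priori you have \emph{two} maps $V\to|\cO_X(n-3)|^*$: the extension-class isomorphism of Proposition~\ref{Prop:OurMainChart}, and the map $(E,\bl)\mapsto\mathrm{image}(\varphi|_{\text{Higgs}})$. That they coincide is exactly Corollary~\ref{Coro:CanonicalParabolicMap}, and it is not automatic. The paper's mechanism is this: the cup product of $\gamma$ with the extension class $(b_{kl}a_l)$ lands in $H^1(X,\Omega_X^1)\simeq\cc$ via the sum-of-residues map, and a direct residue computation shows this number equals $\lambda\cdot\sum_i\nu_i^-=\lambda\rho$, where $\lambda$ is the unique scalar for which the $\lambda$-connection with apparent section $\gamma$ exists. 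Thus $\lambda=0$ if and only if the Serre pairing vanishes, i.e.\ if and only if $(\ba,\bb)\in\Sigma$. You have named the right tool (Serre duality) but not supplied this computation, and without it the second half of the theorem is only asserted, not proved.

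A smaller point: your claim that $\varphi_\Theta=0\Rightarrow\Theta=0$ for strongly parabolic Higgs fields with $l_i\not\subset\cO_X$ is correct but also needs an argument (the paper gives one in the alternate proof of Corollary~\ref{Coro:CanonicalParabolicMap}: if the lower-left entry vanishes, nilpotency forces the diagonal entries to vanish at all $t_i$, hence identically; then the upper-right entry cannot vanish at all $t_i$, forcing some $l_i\subset\cO_X$).
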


\begin{proof} It is enough to show that the natural morphism  
$\mathrm{App}\times\mathrm{Bun}$ (\ref{eq:isomorphism})  induces 
a regular isomorphism between algebraic varieties.  
Like in the proof of Proposition \ref{Prop:OurMainChart},
we consider a parabolic bundle $(E,\bl)\in P_{-1}^{\bw}(\bt)$ defined as an extension class, 
i.e. by a matrix cocycle 
$$M_{kl}=\begin{pmatrix}1 & b_{kl}\\ 0 & a_{kl}\end{pmatrix}$$
where the multiplicative cocycle $(a_{kl})$ defines the line bundle $L$
and the extension is equivalently defined by 
the cocycle $(b_{kl}a_l)_{kl}\in H^1(X,L^{-1}(-D))\simeq H^0(X,L\otimes\Omega_X^1(D))^*$
where $a_{kl}=\frac{a_k}{a_l}$ is a meromorphic resolution ($\mathrm{div}(a_k)=\mathrm{div}(L)$).
Let us fix also a non zero element $\gamma\in H^0(X,L\otimes\Omega_X^1(D))\setminus\{0\}$. 
We want to show that there is a unique 
$\lambda\in \mathbb C$ and a unique $\lambda$-connection $\nabla:E\to E\otimes\Omega_X^1(D)$ 
(compatible with $\bnu$ and $\bl$) realizing $\gamma$ as the apparent map.

Such a $\lambda$-connection $\nabla$ is given in charts $U_k$ by $\nabla=\lambda d+A_k$
\begin{equation}\label{eq:conn-matrix}
A_k=\begin{pmatrix} \alpha_k & \beta_k\\ \gamma_k & \delta_k\end{pmatrix}\in\mathrm{GL}_2(\Omega_{U_k}^1(D))
\end{equation}
with compatibility condition
\begin{equation}\label{CompatibilityConnection}
\lambda\cdot dM_{kl}+ A_kM_{kl}=M_{kl}A_l
\end{equation}
on each intersection $U_k\cap U_l$.
For each pole $z=t_i$, the residue of $A_k$ takes the form
\begin{equation}\label{ResidueCondition}
\mathrm{Res}_{t_i}(A_k)=\begin{pmatrix} \lambda\nu_i^- & 0\\ * & \lambda\nu_i^+\end{pmatrix}.
\end{equation}

The trace connection $\zeta$ is defined on $U_k$ by $d+\omega_k$ with compatibility conditions
$\frac{da_{kl}}{a_{kl}}+\omega_k-\omega_k=0$ on $U_k\cap U_l$. We must have 
\begin{equation}\label{TraceCondition}
\alpha_k+\delta_k=\lambda\omega_k
\end{equation}
on $U_k$. We note that $\frac{da_{kl}}{a_{kl}}=\frac{da_k}{a_k}-\frac{da_l}{a_l}$ so that 
$(\omega_k+\frac{da_k}{a_k})_k$ defines a global $1$-form, say $\omega\in H^0(X,L\otimes\Omega_X^1(D))$, and thus 
$\omega_k=\omega-\frac{da_k}{a_k}$.

Now, compatibility conditions \ref{CompatibilityConnection} expand as
\begin{equation}\label{CompConExpand}
\left\{\begin{matrix}
\frac{\gamma_k}{a_k}-\frac{\gamma_l}{a_l}&=&0& (\rightarrow\gamma:=\frac{\gamma_k}{a_k}=\frac{\gamma_l}{a_l}) \\
\alpha_k-\alpha_l&=&(b_{kl}a_l)\gamma&\\
\delta_k-\delta_l&=&-(b_{kl}a_l)\gamma-\lambda\frac{da_{kl}}{a_{kl}}&\\
a_k\beta_k-a_l\beta_l&=&-(\lambda a_ldb_{kl}+(b_{kl}a_l)(\alpha_k-\delta_l))&
\end{matrix}\right.
\end{equation}
The first condition says that all $(\frac{\gamma_k}{a_k})_k$ glue together to form a global section
$\gamma\in H^0(X,L\otimes\Omega_X^1(D))$. It defines the image of the apparent map.

Our problem is now: given $(b_{kl}a_l)_{kl}\in H^1(X,L^{-1}(-D))$ defining the parabolic bundle 
and $\gamma\in H^0(X,L\otimes\Omega_X^1(D))\setminus\{0\}$ defining the apparent data, prove that the 
matrix connections $\lambda d+A_k$ can be completed in a unique way, with a unique $\lambda$.

{\bf Step1: finding $\gamma_k$.}
Given $\gamma$, we obviously define $\gamma_k:=a_k\gamma\in H^0(U_k,\Omega_X^1(D))$.

{\bf Step2: finding $\alpha_k$.}
Fix $\alpha_k^0$ sections of $\Omega_X^1(D)$ on each $U_k$ realizing the residual data
$\mathrm{Res}_{t_i}(\alpha^0_k)=\nu_i^-$. The cocycle $(\alpha^0_k-\alpha^0_l)$ defines
an element of $H^1(X,\Omega_X^1)$ which is non zero: indeed, if we were able to solve the cocycle
by $\alpha_k^0-\alpha_l^0=\tilde\alpha_k-\tilde\alpha_l$ for some holomorphic $1$-forms $\tilde\alpha_k$,
then $(\alpha_k^0-\tilde\alpha_k)_k$ would define a global meromorphic $1$-form whose sum of residue
 $\sum_{i=1,\ldots,n}\nu_i^-\not=0$ contradicts Residue Theorem.
Now, we want to find $\alpha_k$ of the form $\lambda\alpha_k^0+\tilde\alpha_k$ with $\tilde\alpha_k$ holomorphic.
This means that we have to solve 
$$\tilde\alpha_k-\tilde\alpha_l=(b_{kl}a_l)\gamma - \lambda(\alpha_k^0-\alpha_l^0)$$
in $H^1(X,\Omega_X^1)$; since this cohomology group is one dimensional, $(\alpha_k^0-\alpha_l^0)$
is a generator (being non zero) and there is a unique $\lambda$ such that the right-hand-side is zero
in cohomology, providing solutions $(\tilde\alpha_k)$.
We have now fixed $\lambda$, and the solution $\alpha_k=\lambda\alpha_k^0+\tilde\alpha_k$ is 
unique (there is no global $1$-form on $X=\pc$).

{\bf Step 3: finding $\delta_k$.}
Since the trace connection must be $\zeta$, we have to set $\delta_k:=\omega_k-\alpha_k$.
It is straighforward that it satisfies the 3rd equation of \ref{CompConExpand} and the correct
residual term of \ref{ResidueCondition}. Actually, the sum of 2nd and 3rd equations of \ref{CompConExpand}
exactly give the compatibility condition of $(\omega_k=\alpha_k+\delta_k)_k$ forming $\zeta$. 

{\bf Step 4: finding $\beta_k$ :}
The right-hand-side of 4th equation of \ref{CompConExpand} defines an element of 
$H^1(X,L^{-1}\otimes\Omega_X^1)=\{0\}$. We can solve $\lambda a_ldb_{kl}+(b_{kl}a_l)(\alpha_k-\delta_l)=\tilde\beta_k-\tilde\beta_l$ with $\tilde\beta_k$ belonging to $L^{-1}\otimes\Omega_X^1$, so that $\beta_k:=\frac{\tilde\beta_k}{a_k}$
are sections of $\Omega_X^1$, thus satisfying the residual condition \ref{ResidueCondition}.

We have constructed a unique $\lambda$-connection from data $\gamma$ and $(b_{kl}a_l)_{kl}$.

{\bf Locus of Higgs fields.}
By Serre Duality, we have a perfect pairing
$$
H^0(X,L\otimes\Omega_X^1(D))\times H^1(X,L^{-1}(-D))\longrightarrow H^1(X,\Omega_X^1)\stackrel{\sim}{\longrightarrow}\mathbb C.
$$
More precisely, in our construction, to the data $(\ \gamma\ ,\ (b_{kl}a_l)_{kl}\ )$, we associate
the cocycle $(b_{kl}a_l)\gamma\in H^1(X,\Omega_X^1)$ which admits the meromorphic
resolution $(\alpha_k-\alpha_l)_{kl}$. The principal (polar) part of $(\alpha_k)_k$ is well-defined;
for instance, $\mathrm{Res}_{t_i}(\alpha_k)_k=\nu_i^-$ does not depend on the chart $U_k$.
The last arrow is given by the sum of residues: it measures the obstruction to realize the principal 
part by a global meromorphic $1$-form. Concretely, the image is 
$$\sum_{i=1,\ldots,n}\mathrm{Res}_{t_i}(\alpha_k)_k=\lambda\cdot\sum_{i=1,\ldots,n}\nu_i^-.$$
We get a Higgs field precisely when $\lambda=0$, i.e. when the image is zero. Finally, the locus of Higgs fields
in our theorem is given by the incidence variety for the above Serre Duality.
\end{proof}

\begin{figure}[htbp]
\begin{center}
\input{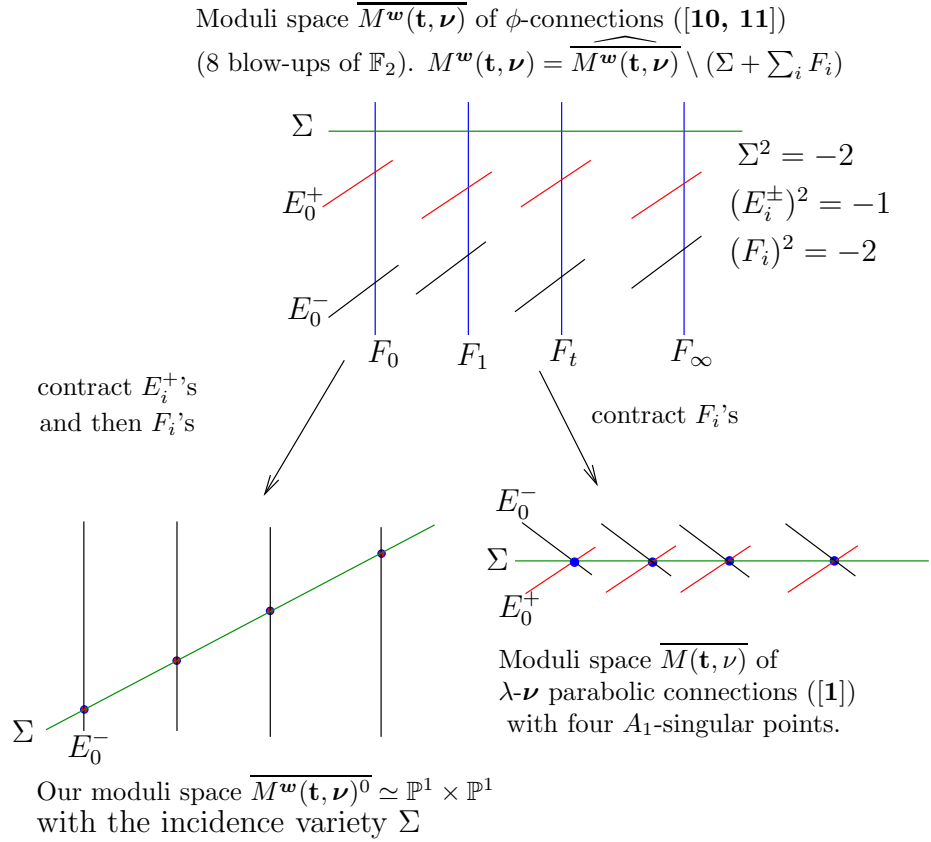} 
\caption{Different moduli spaces  and their relations  in case $n=4$.}
\label{figure:Moduli4}
\end{center}
\end{figure}

\begin{remark}\label{rem:smoothness}
{\rm 
 Note that without the condition of $(E, \bl) \in P^{\bw}_{-1}$ for our choice of the weights $\bw$ 
or the conditions in Proposition \ref{Prop:OurMainChart}, the coarse moduli space $\overline{M(\bt, \bnu)}$  
of $\lambda$-$\bnu$-connections  have singularities.  
We will explain about this in the case of $n=4$. (See Figure \ref{figure:Moduli4}).   
In this case, the coarse moduli space $ \widehat{\overline{M^{\bw}(\bt, \bnu)}}$ of stable parabolic $\bnu$-$\phi$-connections  gives a smooth compactification of the moduli space $M^{\bw}(\bt, \bnu)$ of $\bw$-stable $\bnu$-parabolic connections and it gives an Okamoto-Painlev\'{e} pair for Painlev\'e VI equations (\cite{SaitoTakebeTerajima,InabaIwasakiSaito,InabaIwasakiSaito2}). In fact, $M^{\bw}(\bt, \bnu)$ is the complement of 
$\Sigma+ \sum_{i} F_i$ in $ \widehat{\overline{M^{\bw}(\bt, \bnu)}}$. 
This moduli space  $ \widehat{\overline{M^{\bw}(\bt, \bnu)}}$ is 
isomorphic to the blown-up of 8-points of $\mathbb F_2$  
as in Figure \ref{figure:Moduli4}.  
 Note that in this case, $\phi$ is the endomorphism of $E =\cO_{X} \oplus \cO_{X}(-1)$. 
For simplicity, we assume that $\bnu$ is generic and all connections are $\bw$-stable.  
 Here the exceptional curves $E_i^{+} \setminus F_i \cap E_i^{+} $ (resp. $E_i^{-} \setminus  F_i \cap E_i^{-}$)  is  the locus of the parabolic connections such that  the apparent coordinate 
$q = t_i$ and $ l_i \subset \cO_{X}$ (resp. $l_i \not\subset \cO_{X}$) 
and $F_i \setminus (F_i \cap \Sigma)$ is the locus of $\phi$-connections with $\rank \phi =1 $. 
Moreover  $\Sigma$ is the locus of $\phi=0$, that is, the locus of 
Higgs bundles.  In order to obtain the moduli space $\overline{M(\bt, \bw)}$
of $\lambda$-connections (\cite{Arinkin}), we 
just contract $F_i$'s which are $(-2)$-rational curves.  Hence $\overline{M(\bt, \bw)}$ 
has four $A_1$-singular points.  On the other hand, our moduli space $\overline{M^{\bw}(\bt, \bnu)^0}$ can be obtained by contracting $E_{i}^{+}$'s and $F_i$'s and 
$\overline{M^{\bw}(\bt, \bnu)^0}$ is smooth and isomorphic to $\pp^1 \times \pp^1$ with the diagonal 
incidence variety $\Sigma \subset \pp^1 \times \pp^1$ as in Figure \ref{figure:Moduli4}.  
 }
\end{remark}

Another interpretation of our main theorem is that the image of the apparent map for Higgs fields
characterizes the bundle. 
Precisely, given $(E,\bl)\in P^{\bw}_{-1}(\bt)$, let us consider the fiber 
$\mathrm{Bun_H}^{-1}((E,\bl)) \subset $
of $\mathrm{Bun_H}$ in (\ref{eq:bd-bun}) and one can look at the restriction 
\begin{equation}\label{eq:boundary}
\mathrm{App}:\mathrm{Bun_H}^{-1}((E,\bl)) \simeq
\mathbb PH^0(\mathrm{End}(E,\boldsymbol{l})\otimes \Omega_{X}(D))\lra \vert\mathcal O_{X}(n-3)\vert
\end{equation}
to the boundary at infinity of connections $M^{\bw}(\bt,\bnu)^0$. Our main results says first that 
the image of ( \ref{eq:boundary} ) is non degenerate, i.e.  defines an hyperplane in $\vert\mathcal O_{X}(n-3)\vert$, 
thus defining an element of the dual $\vert\mathcal O_{X}(n-3)\vert^*$;
moreover, this hyperplane determines the parabolic structure $\bl$. 

\begin{corollary}\label{Coro:CanonicalParabolicMap}
The map 
$$
P^{\bw}_{-1}(\bt)\lra\vert\mathcal O_{X}(n-3)\vert^*\ ;\ (E,\bl)\mapsto \mathrm{image}(\mathrm{App}(\mathbb PH^0(\mathrm{End}(E,\boldsymbol{l})\otimes \Omega_{X}(D))))
$$
is well-defined and is an isomorphism.
\end{corollary}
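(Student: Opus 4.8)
The plan is to read this statement off directly from the boundary isomorphism
$\App\times\Bun_H:M^{\bw}(\bt,\bnu)_H^0\stackrel{\sim}{\lra}\Sigma$
of Theorem \ref{th:Main}, combined with the elementary projective geometry of the incidence variety $\Sigma$. Recall from (\ref{eq:bd-bun}) that $\Bun_H$ exhibits $M^{\bw}(\bt,\bnu)_H^0$ as the projectivized cotangent bundle $\mathbb PT^*V\to V$ over $V=P^{\bw}_{-1}(\bt)$, so that the fiber over a point $(E,\bl)$ is exactly $\mathbb PH^0(\End(E,\bl)\otimes\Omega_X^1(D))\simeq\pp_{\cc}^{n-4}$, the projectivized space of parabolic Higgs fields. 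The map of the corollary is nothing but the apparent map restricted to such a fiber, and what must be shown is that its image is a hyperplane depending isomorphically on $(E,\bl)$.

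First I would fix $(E,\bl)\in V$ and set $\bb:=\Bun(E,\bl)\in\pp_{\bb}^{n-3}=\vert\mathcal O_{X}(n-3)\vert^{*}$, using the identification of Proposition \ref{Prop:OurMainChart}. Since $\App\times\Bun_H$ maps the whole fiber $\Bun_H^{-1}(E,\bl)$ isomorphically onto the fiber of the second projection $\Sigma\to\pp_{\bb}^{n-3}$ above $\bb$, namely onto
$$\Sigma_{\bb}=\{(\ba,\bb)\ ;\ \textstyle\sum_k a_kb_k=0\},$$
and since the first projection restricts to an isomorphism $\Sigma_{\bb}\stackrel{\sim}{\to}H_{\bb}$ onto the hyperplane $H_{\bb}:=\{\ba\ ;\ \sum_k a_kb_k=0\}\subset\pp_{\ba}^{n-3}$, composing shows that $\App$ restricted to $\Bun_H^{-1}(E,\bl)$ is an isomorphism onto $H_{\bb}$. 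In particular the image is a genuine (non degenerate) hyperplane, so the map of the corollary is well-defined and sends $(E,\bl)$ to the point $\bb$ under the tautological identification of $\vert\mathcal O_{X}(n-3)\vert^{*}$ with the hyperplanes of $\vert\mathcal O_{X}(n-3)\vert$.

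This computation identifies the map of the corollary with $\Bun:P^{\bw}_{-1}(\bt)\to\vert\mathcal O_{X}(n-3)\vert^{*}$ followed by the tautological self-duality $\bb\mapsto H_{\bb}$; the former is the isomorphism of Proposition \ref{Prop:OurMainChart} and the latter is an isomorphism, so the composite is an isomorphism as claimed. The only point needing care, and the step I expect to be the main (if mild) obstacle, is the compatibility of the two descriptions of the target: one must check that the intrinsic hyperplane $\mathrm{image}(\App(\mathbb PH^0(\End(E,\bl)\otimes\Omega_X^1(D))))$ attached to $(E,\bl)$ coincides, under the Serre-duality pairing used throughout the proof of Theorem \ref{th:Main}, with the hyperplane determined by the extension class $\bb=\Bun(E,\bl)$. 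This is exactly the content of the incidence equation $\sum_k a_kb_k=0$ cutting out $\Sigma$, hence is already encoded in Theorem \ref{th:Main}; it remains only to trace the identifications through.
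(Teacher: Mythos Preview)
Your deduction is correct and matches the paper's first derivation: the discussion immediately preceding the Corollary explains exactly this, reading off the hyperplane $H_{\bb}$ from the fiber of $\Sigma\to\pp_{\bb}^{n-3}$ via the boundary isomorphism of Theorem~\ref{th:Main}. One minor point: Theorem~\ref{th:Main} assumes $\sum_i\nu_i^-\neq 0$, but since the Higgs locus $M^{\bw}(\bt,\bnu)_H^0$ and the map of the Corollary are independent of $\bnu$, you may simply fix any $\bnu$ satisfying this hypothesis, so there is no loss.

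The paper also gives a second, explicitly labeled \emph{Alternate proof} in Section~\ref{subsection:HiggsFields} that proceeds by direct computation and is logically independent of Theorem~\ref{th:Main}. There one writes a generic Higgs field $\Theta=\begin{pmatrix}\alpha&\beta\\ \gamma&\delta\end{pmatrix}$ on $E=\mathcal O_X\oplus\mathcal O_X(-1)$ and shows by a degree count that $\gamma\equiv 0$ forces $\Theta\equiv 0$ (else some $l_i\subset\mathcal O_X$, contradicting $(E,\bl)\in V$); this proves injectivity of $\Theta\mapsto\varphi_\Theta$ and hence well-definedness. Birationality is then checked by an explicit geometric argument with the lines $\Delta_i\subset\vert\mathcal O_X(n-3)\vert$ of polynomials vanishing at all $t_j$, $j\neq i$, recovering the parabolic data $(u_1,\ldots,u_{n-3})$ from the intersection points $H\cap\Delta_i$. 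The advantage of this second route is that it is self-contained and, as the paper remarks, can in turn be used to establish the injectivity of $\mathrm{App}\times\mathrm{Bun}$ on each $\mathrm{Bun}$-fiber, so the logical dependence between the Corollary and Theorem~\ref{th:Main} runs both ways.
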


In fact, it is not difficult to deduce our main result from this corollary; for instance,
that the above map is well-defined shows the injectivity of $\mathrm{App}\times\mathrm{Bun}$
in restriction to each fiber $\mathrm{Bun}^{-1}(E,\bl)$.
We will provide an alternate proof of this Corollary by direct computation in the next section.

\subsection{The degenerate case}When $\sum_i \nu_i^-=0$ ($\Leftrightarrow\sum_i \nu_i^+=1$), we get

\begin{proposition}\label{prop:ReducibleFibrations}
If $\sum_i \nu_i^-=0$, then $M^{\bw}(\bt,\bnu)^0$ identifies with the total space of the cotangent bundle $T^*V$,
and the map $\mathrm{Bun}:M^{\bw}(\bt,\bnu)^0\to V$, with the natural projection $T^*V\to V$.
Here, the section $\nabla_0:V\to M^{\bw}(\bt,\bnu)^0$ corresponding to the zero section of $T^*V\to V$ 
is given by those reducible connections
preserving the destabilizing subbundle $\mathcal O_X$.
Moreover, the map $\mathrm{App}\times\mathrm{Bun}$ is the natural map 
between total spaces 
$$
\xymatrix{ M^{\bw}(\bt,\bnu)^0\simeq T^*V \ar@{-->}[rr]^{\mathrm{App}\times\mathrm{Bun}} \ar[rd]_{\mathrm{Bun}} && 
\mathbb P T^*V\simeq\Sigma \ar[ld]^{\mathrm{Bun}} \\ & V }
$$
with indeterminacy locus $\nabla_0$.
\end{proposition}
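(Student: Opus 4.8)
The plan is to revisit the cocycle computation carried out in the proof of Theorem \ref{th:Main}, specializing it to the case $\rho=0$ (equivalently $\sum_i\nu_i^-=0$) and $\lambda=1$, and to track how the argument that pinned down a unique $\lambda$ now degenerates. Recall from that proof that a parabolic bundle $(E,\bl)\in V=P^{\bw}_{-1}(\bt)$ is encoded by a cocycle $\xi:=(b_{kl}a_l)_{kl}\in H^1(X,L^{-1}(-D))$, and that a $\lambda$-connection realizing the apparent datum $\gamma\in H^0(X,L\otimes\Omega_X^1(D))$ imposes, via the second equation of (\ref{CompConExpand}) together with (\ref{ResidueCondition}), the Serre-duality relation $\langle\gamma,\xi\rangle=\lambda\sum_i\nu_i^-=-\lambda\rho$. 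First I would record that $\Bun:M^{\bw}(\bt,\bnu)^0\to V$ is an affine $\mathbb A^{n-3}$-bundle modelled on the bundle of strongly parabolic Higgs fields $H^0(\mathrm{End}(E,\bl)\otimes\Omega_X^1(D))\cong T^*_{(E,\bl)}V$; the only issue is whether it is the trivial (cotangent) bundle or its nontrivial affine extension, so it suffices to exhibit a global section.

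Next I would construct that section. Setting $\gamma=0$ and running Steps 1--4 of the proof of Theorem \ref{th:Main} with $\rho=0$, $\lambda=1$: Step 1 gives $\gamma_k=0$; in Step 2 the equation $\tilde\alpha_k-\tilde\alpha_l=\xi\cdot 0$ is solvable and, since $H^0(\pp^1,\Omega^1_{\pp^1})=0$, forces $\alpha_k=\alpha^0$ for the global meromorphic $1$-form $\alpha^0$ with residues $\nu_i^-$, which exists and is unique precisely because $\sum_i\nu_i^-=0$ (residue theorem); Steps 3--4 then determine $\delta_k,\beta_k$ uniquely, the last because $H^0(\pp^1,L^{-1}\otimes\Omega_X^1)=H^0(\pp^1,\Oo(-1))=0$. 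This produces a unique connection $\nabla_0$ with vanishing $(2,1)$-entry, i.e.\ one preserving the subbundle $\Oo\hookrightarrow E$; it restricts to the rank-one connection with residues $\nu_i^-$ on $\Oo$ and induces residues $\nu_i^+$ on the quotient $L$. Note $\nabla_0$ is a genuine connection precisely when $\rho=0$: for $\rho\ne0$ the relation above forces $\lambda=0$, i.e.\ a Higgs field. Since the whole construction is relative over $V$, uniqueness makes $(E,\bl)\mapsto\nabla_0$ an algebraic section, which trivializes the affine bundle, giving $M^{\bw}(\bt,\bnu)^0\cong T^*V$ with $\nabla_0$ the zero section.

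To identify the maps, I would read off the fiberwise behaviour of $\App$. For fixed $\xi$ the computation shows that for every $\gamma$ in the hyperplane $\{\gamma:\langle\gamma,\xi\rangle=0\}\subset H^0(X,L\otimes\Omega_X^1(D))$ there is a unique connection; hence $\App$ restricts to a linear isomorphism from $\Bun^{-1}(E,\bl)$ onto this hyperplane, carrying $\nabla_0$ to $\gamma=0$. By Serre duality the hyperplane is canonically the annihilator of $\xi$, i.e.\ $T^*_{(E,\bl)}V$ (up to the tautological line over $\xi\in V\cong\pp H^1(X,L^{-1}(-D))$). Therefore, under $M^{\bw}(\bt,\bnu)^0\cong T^*V$, the map $\Bun$ is the bundle projection and $\App$ is the projectivization $v\mapsto[v]$, i.e.\ the natural rational map $T^*V\dashrightarrow\pp T^*V$ with indeterminacy exactly along the zero section $\nabla_0$. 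Finally $\pp T^*V\cong\Sigma$ is the standard identification of the projectivized cotangent bundle of $V\cong\pp^{n-3}$ with the incidence variety (compare (\ref{eq:bd-bun})), compatible with $\Bun$, which yields the displayed commutative square.

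The main obstacle I anticipate is the compatibility in the last paragraph: checking that the fiberwise isomorphism furnished by $\App$ agrees, as a map of cotangent spaces, with the Serre-duality identification $\{\langle\cdot,\xi\rangle=0\}\cong T^*_\xi V$, and that this holds in families over $V$ including the matching of the $\cc^*$-scalings of $\xi$ and $\gamma$. I would handle this by keeping $\xi$ and $\gamma$ as varying cocycles throughout Steps 1--4, so that the fiberwise linearity and the section $\nabla_0$ globalize automatically; as a consistency check one sees that the potential $\eta$ indeed degenerates when $\rho=0$, reflecting that $T^*V$ carries its own canonical structure rather than the coordinates $(\ba,\bb)$.
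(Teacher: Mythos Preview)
Your proposal is correct and follows essentially the same route as the paper: set $\gamma=0$, $\lambda=1$ in the cocycle computation of Theorem~\ref{th:Main} to produce the reducible section $\nabla_0$ (your residue-theorem justification for Step~2 makes explicit what the paper leaves implicit), then use the resulting trivialization $M^{\bw}(\bt,\bnu)^0\cong T^*V$ together with the linearity $\varphi_{\nabla_0+\Theta}=\varphi_\Theta$ to identify $\mathrm{App}$ with fiberwise projectivization. The compatibility concern you flag at the end is exactly the identification $\mathbb{P}T^*V\cong\Sigma$ already recorded in (\ref{eq:bd-bun}), which the paper simply invokes.
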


Here, the restriction $(\nabla_0)_{\vert\mathcal O_X}$ has eigenvalues $\nu_i^-$
and Fuchs relation is just $\sum_i \nu_i^-=0$. Fibers of $\mathrm{App}\times\mathrm{Bun}$
are one-dimensional in this case.

\begin{proof}Going back to the proof of Theorem \ref{th:Main}, we see that, under assumption 
$\gamma=0$ (reducibility condition) and setting $\lambda=1$ we get a unique connection $\nabla_0$
for each given parabolic bundle $(E,\bl)\in P^{\bw}_{-1}(\bt)$. This section $\nabla_0$ allows to reduce the group 
structure of the affine bundle $\mathrm{Bun}:M^{\bw}(\bt,\bnu)^0\to V$: in this case,
$M(\bt,\bnu)^0$ is the trivial affine extension of the cotangent bundle $T^*V$,
i.e. the total space of the cotangent bundle itself, and  $\mathrm{Bun}$ is the 
natural projection $T^*V\to V$. 

The unique $\lambda$-connection with vanishing apparent map $\varphi_\nabla$ 
(or $\gamma$ if we follow again the proof of Theorem \ref{th:Main}) is 
$\lambda\cdot\nabla_0$. The apparent map of the general $\lambda$-connection
$\nabla=\lambda\cdot\nabla_0+\Theta$ is thus given by $\varphi_{\nabla}\equiv\varphi_{\Theta}$
by linearity. On each fiber $\mathrm{Bun}^{-1}(E,\bl)$, the apparent map is thus the projection
on the image of Higgs bundles, namely $\Sigma$.
\end{proof}

\section{Some computations}

Here we provide some explicit formulae for the two fibrations.

\subsection{Higgs fields and connections}\label{subsection:HiggsFields}

By fractional linear transformation, set $(t_{n-2},t_{n-1},t_n)=(0,1,\infty)$ for simplicity.
In order to describe the generic Higgs bundle or connection in matrix form, we use the following isomorphism
$$\mathrm{Elm}_{t_n}^-:P_0(\bt)\to  P_{-1}(\bt);(E,\bl)\mapsto(E',\bl').$$
It induces a birational map $U\dashrightarrow V$ between the projective charts $U:=U_{n-2,n-1,n}$ introduced 
in Section \ref{subsection:trivialbundle} and $V:=P_{-1}^{\bw}(\bt)$, in Section \ref{subsection:chartV}.
Precisely, denote by $e_1$ and $e_2$ a basis of $E=\mathcal O_X\oplus\mathcal O_X$
with $l_{n}=\cc\cdot e_1$, $l_{n-2}=\cc\cdot e_2$ and $l_{n-1}=\cc\cdot(e_1+e_2)$; 
then, choose the basis $(e_1',e_2')$ for $(E',\bl'):=\mathrm{Elm}_{t_n}^-(E,\bl)$ 
given by $e_i':=e_i$ outside of $t_n$: $e_2'$ has a pole at $t_n$ and generates $\mathcal O_X(-1)$.
Note that $e_1=e_1'$ is the cyclic vector for the apparent map.
The rational map $U\dashrightarrow V$ is therefore given by 
$$(u_1,\ldots,u_{n-3},0,1,\infty)\mapsto (v_1,\ldots,v_n)=(u_1,\ldots,u_{n-3},0,1,0)$$
where parabolic structures $\bl$ and $\bl'$ are respectively generated by
$u_ie_1+e_2$ and $v_ie_1'+e_2'$. We interpret this as a map
$$\begin{matrix}
U\simeq(\pc)^{n-3}&\dashrightarrow& V\simeq \pp^{n-3}_{\cc}\\
\mathbf{u}=(u_1,\ldots,u_{n-3})&\mapsto&\mathbf{v}=(u_1:\ldots:u_{n-3}:1)
\end{matrix}$$
Assume, for computations, that $(u_1,\ldots,u_{n-3})\in\cc^{n-3}$.
We also rename spectral data as follows:
$$
\begin{matrix}
E=\mathcal O_X\oplus\mathcal O_X & E'=\mathcal O_X\oplus\mathcal O_X(-1)\\
\begin{pmatrix}t_1&\cdots&t_{n-3}&0&1&\infty\\
\nu_{t_1}^+&\cdots&\nu_{t_{n-3}}^+&\nu_0^+&\nu_1^+&\nu_\infty^-\\
\nu_{t_1}^-&\cdots&\nu_{t_{n-3}}^-&\nu_0^-&\nu_1^-&\nu_\infty^+-1
\end{pmatrix}
&
\begin{pmatrix}t_1&\cdots&t_{n-3}&0&1&\infty\\
\nu_{t_1}^+&\cdots&\nu_{t_{n-3}}^+&\nu_0^+&\nu_1^+&\nu_\infty^+\\
\nu_{t_1}^-&\cdots&\nu_{t_{n-3}}^-&\nu_0^-&\nu_1^-&\nu_\infty^-
\end{pmatrix}
\end{matrix}
$$ 
Then, the general connection on $(E,\bl)$ or $(E',\bl')$ writes 
$$\nabla=\nabla_0+c_1\Theta_1+\cdots+c_{n-3}\Theta_{n-3},\ \ \ (c_i)\in\cc^{n-3}$$
where 
\begin{equation}\label{TheConnection}
\nabla_0:=d+\begin{pmatrix}\nu_0^-&0\\\rho&\nu_0^+\end{pmatrix}\frac{dz}{z}
+\begin{pmatrix}\nu_1^--\rho&\nu_1^+-\nu_1^-+\rho\\ -\rho&\nu_1^++\rho\end{pmatrix}\frac{dz}{z-1}
\end{equation}
$$
+\sum_{i=1}^{n-3}\begin{pmatrix}\nu_{t_i}^-&(\nu_{t_i}^+-\nu_{t_i}^-)u_i\\ 0&\nu_{t_i}^+\end{pmatrix}\frac{dz}{z-t_i},\ \ \ 
\text{with}\ \rho= \nu_{0}^-+ \nu_{1}^-+ \nu_{\infty}^-+
\sum_{i=1}^{n-3}\nu_{t_i}^-,
$$
and 
\begin{equation}\label{generatorHiggs}
\Theta_i:=\begin{pmatrix}0&0\\1-u_i&0\end{pmatrix}\frac{dz}{z}
+\begin{pmatrix}u_i&-u_i\\ u_i&-u_i\end{pmatrix}\frac{dz}{z-1}
+\begin{pmatrix}-u_i&u_i^2\\-1&u_i\end{pmatrix}\frac{dz}{z-t_i}.
\end{equation}
The connection $\nabla_0$ is the unique connection (compatible with the given parabolic structure) such that 
the divisor of the apparent map 
$\mathrm{App}(\nabla_0)$ takes the form $\mathrm{div}(\varphi_{\nabla_0})=t_1+\cdots+t_{n-3}$: 
in this case, $e_1$ is the $\nu_{t_i}^-$-eigendirection 
for $i=1,\ldots,n-3$. We note that 
$$\rho=0\Leftrightarrow
\nu_{0}^-+ \nu_{1}^-+ \nu_{\infty}^-+
\sum_{i=1}^{n-3}\nu_{t_i}^- = 0 
\Leftrightarrow \nu_{0}^++ \nu_{1}^++ \nu_{\infty}^++
\sum_{i=1}^{n-3}\nu_{t_i}^+=1$$
in which case $\nabla_0$ is the reducible connection (see Proposition \ref{prop:ReducibleFibrations}): 
the subbundle $\mathcal O_X\subset E$ (resp. $E'$)
generated by the cyclic vector $e_1$ (resp. $e_1'$) is $\nabla_0$-invariant.

The parabolic Higgs fields $\Theta_i$, $i=1,\ldots,n-3$, are independent over $\mathbb C$ 
(they do not share the same poles) and 
any other one is a linear combination of these $\Theta_i$'s.
These generators have been choosen so that the apparent map has divisor 
$\mathrm{div}(\varphi_{\Theta_i})=\mu_i+\sum_{j\not=i}t_j$ where $\mu_i = \frac{t_i(u_i-1)}{u_i- ti}$.
Moreover, the moduli space of parabolic Higgs bundles is naturally isomorphic to the total space of the cotangent bundle $T^*U$
(for those parabolic  Higgs bundles $(E,\bl,\Theta)$ with $(E,\bl)\in U$). 
Under this identification, $\Theta_i$
corresponds to the differential form $du_i$. 

Denoting $\nabla=d+Adz$, the apparent map is given by the coefficient $\varphi_\nabla:=A(2,1)$ and we get
\begin{equation}\label{varphiFormula}
\varphi_\nabla=-\frac{\rho}{z(z-1)}+\sum_{i=1}^{n-3}c_i\frac{(u_i-t_i)z+(1-u_i)t_i}{z(z-1)(z-t_i)} =
\frac{\tilde{\varphi}_{\nabla}(z)}{z(z-1)\prod_{j}(z-t_j)},
\end{equation}
where $\tilde{\varphi}_{\nabla}(z)$ is a polynomial of $z$ degree $n-3$.  
The roots $z=q_1,\ldots,q_{n-3}$
of $\varphi_\nabla$ (or of $\tilde{\varphi}_{\nabla}(z)$ ) are the apparent singular points with respect to the cyclic vector $e_1$ (resp. $e_1'$).
For such a variable $q$, we define the dual variable as
$$p:=A(1,1)\vert_{z=q}-\frac{\nu_0^-}{q}-\frac{\nu_1^-}{q-1}-\sum_{i=1}^{n-3}\frac{\nu_{t_i}^-}{q-t_i}$$
i.e.
\begin{equation}\label{pFormula}
p=-\frac{\rho}{q-1} +\sum_{i=1}^{n-3}c_iu_i\left(\frac{1}{q-1}-\frac{1}{q-t_i}\right).
\end{equation}
The natural symplectic structure on the moduli space $M(\bt,\bnu)$ is defined by 
\begin{equation}\label{eq:omega}
\omega=\sum_{i=1}^{n-3} dp_i\wedge dq_i
\end{equation}
and the two maps $\mathrm{App}$ and $\mathrm{Bun}$ are Lagrangian with respect to $\omega$.
Here recall that $(q_i, p_i)$ are not the coordinates for the moduli space $M(\bt, \bnu)$, 
but the coordinates for some $(n-3)!$-covering of $M(\bt, \bnu)$. However the symplectic form 
$\omega$ in (\ref{eq:omega}) is invariant under the changing the order of roots $q_i$, 
so it descends to a symplectic form on  $M(\bt, \bnu)$.

Under these explicit notation, we can give the following
\begin{proof}[Alternate proof of Corollary \ref{Coro:CanonicalParabolicMap}]
We want first to show
that the map $\varphi_\Theta$ is not identically zero for any Higgs bundle $(E,\bl,\Theta)$ with $(E,\bl)\in V$. 
A Higgs field writes in a matrix form as
$$\Theta=\begin{pmatrix}\alpha&\beta\\ \gamma&\delta\end{pmatrix}$$
and the map $\varphi_{\Theta}$ is given by the coefficient $\gamma$ which is a holomorphic section of $\mathcal O_{X}(n-3)$.
We want to check that $\gamma\equiv 0$ implies that either $\Theta\equiv0$, or one of the parabolics $l_i\in\mathcal O_{X}$.
If $\gamma\equiv 0$, then $\alpha$ and $\delta$ have to vanish over each $z=t_i$, $i=1,\ldots,n$, since 
$\Theta$ has to be nilpotent over these points. But $\alpha$ and $\delta$ are sections of $\mathcal O_{X}(n-2)$
and have thus to be identically zero. Finally, if $\beta\not\equiv0$, then, as a section of $\mathcal O_{X}(n-1)$, it cannot
vanish at all $z=t_i$: for some $t_i$ the matrix is not zero and take the form 
$$\Theta\vert_{z=t_i}=\begin{pmatrix}0&1\\0&0\end{pmatrix}$$
and the corresponding parabolic $l_i$ lies on $\mathcal O_{X}$. 
The map $\Theta\mapsto\varphi_{\Theta}$ defines an homomorphism
$$H^0(\mathrm{End}(E,\boldsymbol{l})\otimes \Omega_{X}(D))\to H^0(X,L\otimes\Omega_X^1(D))$$
and we have just proved that it has zero kernel: it is injective.
Therefore, its image $\mathrm{image}(\mathrm{App}(\mathbb PH^0(\mathrm{End}(E,\boldsymbol{l})\otimes \Omega_{X}(D))))$
defines an hyperplane of $H^0(X,L\otimes\Omega_X^1(D))$, i.e. an element of the dual $\mathcal O_{X}(n-3)\vert^*$,
which depends only on $(E,\boldsymbol{l})\in V$.


We have thus proved that the map $V\to\vert\mathcal O_{X}(n-3)\vert^*$ is a well-defined morphism
and may be viewed as an endomorphism of $\mathbb P_{\cc}^{n-3}$
(after fixing isomorphisms with $\mathbb P_{\cc}^{n-3}$). In order to prove that
it is an isomorphism, we just  have 
to check that it  is birational. For this, it is enough to prove that the composition 
$$U\ \stackrel{\mathrm{Elm}_{t_n}^-}{\dashrightarrow}\ V\ \stackrel{\mathrm{Bun}}{\longrightarrow}\ \vert\mathcal O_{X}(n-3)\vert^*$$
is birational (since the left-hand-side is). We compute this latter one in the affine chart
$(u_1,\ldots,u_{n-3})\in\mathbb C^{n-3}\subset U$. 
For $\Theta=\Theta_i$, the map $\varphi_{\Theta_i}$ is the multiplication by
$$\gamma=\frac{P_i(z)\cdot dz}{z(z-1)\prod_j(z-t_j)}\ \ \ \text{where}\ \ \ P_i(z)=\left[ (u_i-t_i)z+(1-u_i)t_i\right]\prod_{j\not=i}(z-t_j).$$
The zero  of $\varphi_{\Theta_i}$ is thus given by $P_i(z)=0$. Now, consider the line $\Delta_i$ defined in
$\vert\mathcal O_{X}(n-3)\vert$ by those polynomials vanishing on all $t_j$, $j\not=i$; these lines all intersect
at the single point defined by the very special polynomial $\prod_{i}(z-t_i)$ and span $\mathbb P_{\cc}^{n-3}$ ($t_i\not=t_j$ for any $i\not=j$).
For generic $u_i$'s, the hyperplane image $H\subset\mathbb P_{\cc}^{n-3}$ of $\mathrm{App}$ cuts out all $\Delta_i$'s
outside of their common intersection point. Conversely, a generic hyperplane $H$ cuts out each $\Delta_i$
at a single point defined by say $(z-\mu_i)\prod_{j\not=i}(z-t_j)$; solving $\mu_i=\frac{t_i(u_i-1)}{u_i-t_i}$
gives the parabolic structure $(u_1,\ldots,u_{n-3})$.
\end{proof}

Let us start with $\vert\mathcal O_{X}(n-3)\vert\simeq\pp_{\cc}^{n-3}$ equipped with the following projective
coordinates : $\mathbf{a}=(a_0:a_1:\cdots:a_{n-3})$ stands for the polynomial equation $a_{n-3}z^{n-3}+\cdots+a_1z+a_0=0$. 
It can be interesting
to view also this space as $\mathrm{Sym}^{n-3}X$ with $X=\pc$ our initial base curve, and we have a natural map
$$\mathrm{Sym}:X^{n-3}\to\mathrm{Sym}^{n-3}X\ ;\ (q_1,\ldots,q_{n-3})\mapsto (z-q_1)\cdots(z-q_{n-3}).$$
The dual $\vert\mathcal O_{X}(n-3)\vert^*$ is the set of hyperplanes 
$a_0b_0+a_1b_1+\cdots+a_{n-3}b_{n-3}=0$ and has thus natural projective coordinates
$\mathbf{b}=(b_0:b_1:\cdots:b_{n-3})$. Let us explicitely compute the relation ship between 
usual Darboux coordinates $(p_i,q_i)$, our basic coordinates $(u_i,c_i)$ and the new coordinates
$(\mathbf{a},\mathbf{b})$ from our main Theorem \ref{th:Main}. We do this for the Painlev\'e case $n=4$
and the first Garnier case $n=5$.

\subsection{Case $n=4$}

Our starting variables are $u\in\cc\subset U$ and $c\in\cc$. From (\ref{varphiFormula}) and (\ref{pFormula}),
we get Darboux variables:
$$p=-\frac{(t-u)(\rho+c(t-u))}{t(t-1)}\ \ \ \text{and}\ \ \ q=t\frac{\rho+c(1-u)}{\rho+c(t-u)};$$
reversing, we get:
$$u=t\frac{\rho+p(q-1)}{\rho+p(q-t)}\ \ \ \text{and}\ \ \ c=-\frac{(q-t)(\rho+c(q-t))}{t(t-1)}.$$
The apparent map for Higgs bundle (set $c=\infty$ in above formula) vanishes at 
$$\mu=t\frac{1-u}{t-u}=q+\frac{\rho}{p}$$ and we get
$$(a_1:a_0)=(1:-q)\ \ \ \text{and}\ \ \ (b_1:b_0)=(\mu:1).$$
The symplectic structure is given by 
$$dp\wedge dq=dc\wedge du=\rho\cdot d\left(\frac{a_0db_0+a_1db_1}{a_0b_0+a_1b_1}\right).$$
Our $\mu$-variable is exactly the $Q$-variable involved in Section 8 of \cite{LoraySaitoSimpson}
and it was observed, there, that Okamoto symmetry is just given by the involution $(q,\mu)\mapsto(\mu,q)$
(i.e. $(a_1:a_0)\leftrightarrow (b_0:-b_1)$) permuting the two fibrations. We will see in the next section
that there is no such symmetric (global on $M(\bt,\bnu)$) permuting the two fibrations for $n=5$.

\section{Computations for the case $n=5$}\label{sec:Parabolic5}

A straightforward computation shows that the map
$$\mathrm{Bun}\circ\mathrm{Elm}_{t_n}^-:\left\{\begin{matrix}
U&\to&V=\vert\mathcal O_{X}(n-3)\vert^*\\
(u_1,u_2)&\mapsto& (b_0:b_1:b_2)
\end{matrix}\right.$$
is given by
$$\left\{\begin{matrix}
b_2=&t_1t_2(t_1(t_2-1)u_1-(t_1-1)t_2u_2+(t_1-t_2))\\
b_1=&t_1t_2((t_2-1)u_1-(t_1-1)u_2+(t_1-t_2))\\
b_0=&t_2(t_2-1)u_1-t_1(t_1-1)u_2+t_1t_2(t_1-t_2)
\end{matrix}\right.$$
The $(u_1,u_2)$ affine chart may thus be seen as an affine chart of $V$, or equivalently, 
$V$ as an alternate compactification
of the $(u_1,u_2)$-chart. The inverse map is given by 
$$\left\{\begin{matrix}
u_1=&t_1\frac{b_2-(t_2+1)b_1+t_2b_0}{b_2-(t_1+t_2)b_1+t_1t_2b_0}\\
u_2=&t_2\frac{b_2-(t_1+1)b_1+t_1b_0}{b_2-(t_1+t_2)b_1+t_1t_2b_0}
\end{matrix}\right.$$
Apparent singular points are the roots of the polynomial 
$$P(z)=-\rho(z-t_1)(z-t_2) $$
$$+ c_1\left[(u_1-t_1)z+(1-u_1)t_1\right](z-t_2)+c_2(z-t_1)\left[(u_2-t_2)z+(1-u_2)t_2\right]$$

$$=\left[c_1(u_1-t_1)+c_2(u_2-t_2)-\rho\right]z^2$$
$$+\left[\rho(t_1+t_2)+c_1(t_1(t_2+1)-u_1(t_1+t_2)+c_2((t_1+1)t_2-u_2(t_1+t_2)\right]z$$
$$+t_1t_2\left[c_1(u_1-1)+c_2(u_2-1)-\rho\right]$$
We can re-write
$$P(z)=\rho[b_2-(t_1+t_2)b_1+t_1t_2b_0](z-t_1)(z-t_2)$$
$$+c_1t_1(t_1-1)(b_2-(z+t_2)b_1+zt_2b_0)(z-t_1) $$
$$+c_2t_2(t_2-1)(b_2-(z+t_1)b_1+zt_1b_0)(z-t_2).$$
Denoting $z=q_1$ and $z=q_2$ the two apparent singular points, we get
$$c_1=\rho\ \frac{(q_1-t_1)(q_2-t_1)}{t_1(t_1-1)(t_1-t_2)}\ \frac{b_2-(t_1+t_2)b_1+t_1t_2b_0}{b_2-(q_1+q_2)b_1+q_1q_2b_0}$$
and
$$c_2=\rho\ \frac{(q_1-t_2)(q_2-t_2)}{t_2(t_2-1)(t_2-t_1)}\ \frac{b_2-(t_1+t_2)b_1+t_1t_2b_0}{b_2-(q_1+q_2)b_1+q_1q_2b_0}$$
and we already see strong transversality between parabolic and apparent fibrations.
In a more symmetric way, we can introduce the equation $a_2q^2+a_1q+a_0=0$ of the two apparent singular points 
and we get the following formula
 $$c_1=\rho\ \frac{a_2t_1^2+a_1t_1+a_0}{t_1(t_1-1)(t_1-t_2)}\ \frac{b_2-(t_1+t_2)b_1+t_1t_2b_0}{a_2b_2+a_1b_1+a_0b_0}$$
and
$$c_2=\rho\ \frac{a_2t_2^2+a_1t_2+a_0}{t_2(t_2-1)(t_2-t_1)}\ \frac{b_2-(t_1+t_2)b_1+t_1t_2b_0}{a_2b_2+a_1b_1+a_0b_0}.$$
As expected by our choice of coordinates, the locus of Higgs bundles, where $c_1$ and/or $c_2$ goes to the infinity,
is given by the incidence variety $a_2b_2+a_1b_1+a_0b_0=0$.
For each root $z=q_i$, the dual variable is expressed by
$$p_i=-\frac{\rho}{q_i-1} +c_1u_1\left(\frac{1}{q_i-1}-\frac{1}{q_i-t_1}\right)+c_2u_2\left(\frac{1}{q_i-1}-\frac{1}{q_i-t_2}\right)$$
and we get
$$p_1=\rho\frac{b_1-q_2b_0}{b_2-(q_1+q_2)b_1+q_1q_2b_0}\ \ \ \text{and}\ \ \ 
p_2=\rho\frac{b_1-q_1b_0}{b_2-(q_1+q_2)b_1+q_1q_2b_0}.$$
We find that
$$\eta:=p_1 dq_1+p_2 dq_2=\rho\frac{a_2 db_2+a_1 db_1+a_0 db_0}{a_2b_2+a_1b_1+a_0b_0}
-\rho\frac{d(a_2b_2+a_1b_1+a_0b_0)}{a_2b_2+a_1b_1+a_0b_0}+\rho\frac{da_2}{a_2}$$
where $(a_2:a_1:a_0)\sim(1:-q_1-q_2:q_1q_2)$.
The differential
$$\omega:=d\eta=dp_1 \wedge dq_1+dp_2\wedge dq_2
=\rho\cdot d\left(\frac{a_2 db_2+a_1 db_1+a_0 db_0}{a_2b_2+a_1b_1+a_0b_0}\right)$$
is anti-invariant under the involution $(a_2:a_1:a_0)\leftrightarrow(b_2:b_1:b_0)$ that exchanges the two sets of projective coordinates.

A straightforward computation shows that, pulling-back the symplectic form $\omega$ to our initial parameters
$c_i$ and $u_i$, we obtain
$$dp_1\wedge dq_1+dp_2\wedge dq_2=dc_1\wedge du_1+dc_2\wedge du_2$$
which is the Liouville form on moduli space of Higgs bundles.

We have also the following formula comparing classical coordinates to our parabolic ones:
$$(b_2:b_1:b_0)=(p_1q_1^2-p_2q_2^2+\rho(q_1-q_2)\ :\ p_1q_1-p_2q_2\ :\ p_1-p_2).$$

\subsection{The coarse moduli space of parabolic bundles and Del Pezzo geometry}

Here, we want to explicitely describe the full coarse moduli space $P_{-1}(\bt)$ of all 
undecomposable parabolic bundles
as a finite union of projective charts patched together by birational maps between open subsets
(see end of Section \ref{sec:GITbundle}). We already get our main projective chart $V\subset P_{-1}(\bt)$
(defined in Section \ref{subsection:chartV})
that contains almost all undecomposable parabolic bundles: $V:=P_{-1}^{\bw}(\bt)\simeq\pp_\bb^{2}$ where $\bw=(w,w,w,w,w)$ 
with $\frac{1}{5}<w<\frac{1}{3}$.
To get the full coarse moduli space $P_{-1}(\bt)$, 
we have to add those undecomposable parabolic structures on $\mathcal O_{X}\oplus\mathcal O_{X}(-1)$ 
with $1$ or $2$ parabolics lying on $\mathcal O_{X}$, and the unique undecomposable parabolic structure 
on $\mathcal O_{X}(1)\oplus\mathcal O_{X}(-2)$. They will occur in $P_{-1}(\bt)$ as points infinitesimally close 
to special points of $V=\pp_\bb^{2}$, namely those non generic bundles (see Section \ref{sec:GITbundle}).
Let us list them.

\begin{table}[htdp]
\begin{center}
\begin{tabular}{|c|c|c|}
\hline 
& $V$ & $P_{-1}(\bt)\setminus V$ \\
\hline
5 & $D_i :\left\{\begin{matrix}E=\mathcal O_{X}\oplus\mathcal O_{X}(-1)\\
 l_j,l_k,l_m,l_n \subset \mathcal O_{X}(-1)\end{matrix}\right.$ & 
 $\Pi_i:\left\{\begin{matrix}E=\mathcal O_{X}\oplus\mathcal O_{X}(-1)\\ l_i \subset \mathcal O_{X}\end{matrix}\right.$\\
\hline
10 & $\Pi_{i,j}:\left\{\begin{matrix}E=\mathcal O_{X}\oplus\mathcal O_{X}(-1)\\ l_k,l_m,l_n\subset\mathcal O_{X}(-1)\end{matrix}\right.$ & 
$D_{i,j}:\left\{\begin{matrix}E=\mathcal O_{X}\oplus\mathcal O_{X}(-1)\\ l_i,l_j
\subset\mathcal O_{X}\end{matrix}\right.$\\
\hline
1 & $\Pi:\left\{\begin{matrix}E=\mathcal O_{X}\oplus\mathcal O_{X}(-1)\\ l_i,l_j,l_k,l_m,l_n\subset
\mathcal O_{X}(-2)\end{matrix}\right.$ & $D:\left\{\begin{matrix}E=\mathcal O_{X}(1)\oplus\mathcal O_{X}(-2)\\ - \end{matrix}\right.$\\
\hline
\end{tabular}
\end{center}
\caption{Non generic bundles (here, $\{i,j,k,m,n\}=\{1,2,3,4,5\}$).}
\label{default}
\end{table}

{\bf Defining $\Pi_i$ and $D_i$.}
The locus $\Pi_i$ of those undecomposable parabolic structures on $E=\mathcal O_{X}\oplus\mathcal O_{X}(-1)$ having exactly 
$l_i\subset\mathcal O_{X}$ (other parabolics outside $\mathcal O_{X}$) is naturally isomorphic to $X$: it is the moduli
space of undecomposable parabolic structures on $E$ over the $4$ other points, none of them lying on $\mathcal O_{X}$
(see Section \ref{subsec:n=4detailled}).
Each of these parabolic bundles is infinitesimally close to the unique undecomposable parabolic structures on $\mathcal O_{X}\oplus\mathcal O_{X}(-1)$ with all $l_j$, $j\not=i$, lying on the same 
$\mathcal O_{X}(-1)\hookrightarrow E$ (see Section \ref{section:WallCrossing}). There is only one undecomposable bundle
with this latter property and it defines a single point $D_i\in V$. As we shall see, $\Pi_i$ will occur, when we pass 
to another projective chart for $P_{-1}(\bt)$, as the exceptional
divisor after blowing-up the point $D_i$ (wall-crossing phenomenon).

{\bf Defining $\Pi_{i,j}$ and $D_{i,j}$.}
There is a unique undecomposable  parabolic structure $\boldsymbol{l}$ on $E=\mathcal O_{X}\oplus\mathcal O_{X}(-1)$ having exactly 
$l_i,l_j\in\mathcal O_{X}$ (other parabolics outside $\mathcal O_{X}$). It is infinitesimally close to the 
one-parameter family of undecomposable parabolic structures on $\mathcal O_{X}\oplus\mathcal O_{X}(-1)$ with all $l_k$, $k\not=i,j$, 
lying on the same $\mathcal O_{X}(-1)\hookrightarrow E$; this latter family form a rational curve $\Pi_{i,j}\subset V$
which is also naturally parametrized by $X$. Indeed, there is also a $\mathcal O_{X}(-1)\hookrightarrow E$ passing through
$l_i$ and $l_j$, and these two embeddings intersect over a point $z\in X$.
The locus of $\boldsymbol{l}$ is given by a single point $D_{i,j}\not\in V$ in the coarse moduli space $P_{-1}(\bt)$ which is infinitesimally close to any point of $\Pi_{i,j}$. When switching to some other projective charts of $P_{-1}(\bt)$, by moving weights,
the rational curve $D_{i,j}$ is eventually contracted, replaced by the single point $D_{i,j}$.

{\bf Defining $\Pi$ and $D$.}
Finally, the unique undecomposable parabolic structure on $\mathcal O_{X}(1)\oplus\mathcal O_{X}(-2)$ is infinitesimally close to
the one-parameter family of parabolic structures on $E=\mathcal O_{X}\oplus\mathcal O_{X}(-1)$ with all parabolics
lying on the same $\mathcal O_{X}(-2)\hookrightarrow E$. The latter family is again a rational curve $\Pi\subset V$
naturally parametrized by $X$: the subbundles $\mathcal O_{X}(-2)$ and $\mathcal O_{X}$ coincide over a unique point of $X$.
The undecomposable parabolic bundle $\mathcal O_{X}(1)\oplus\mathcal O_{X}(-2)$ is thus represented by a single point $D\in P_{-1}(\bt)\setminus V$
infinitesimally closed to $\Pi$.

{\bf Computations in $V=\pp_\bb^{2}$.}
We have summarized the list of non generic undecomposable parabolic bundles in the table above. All $\Pi,\Pi_i,\Pi_{i,j}$
are one-parameter families naturally parametrized by $X$; they form rational curves in $P_{-1}(\bt)$.
All $D,D_i,D_{i,j}$ are just points. On each line, bundles from the two columns are infinitesimally closed;
bundles from the left side are contained in the main chart $V$ while those on the right side are outside.
There are $16$ one parameter families of special bundles infinitesimally closed to $16$ special bundles.
A straightforward computation shows that:
\begin{itemize}
\item $\Pi$ is the conic with equation $b_1^2-b_0b_2$ parametrized by 
$$X\to \Pi\subset\pp_\bb^{2}\ ;\ z\mapsto (1:z:z^2).$$
\item $D_i$ is the image of $z=t_i$ through the previous mapping.
\item $\Pi_{i,j}$ is the line passing through $D_i$ and $D_j$.
\end{itemize}

\begin{figure}[htbp]
\begin{center}

\input{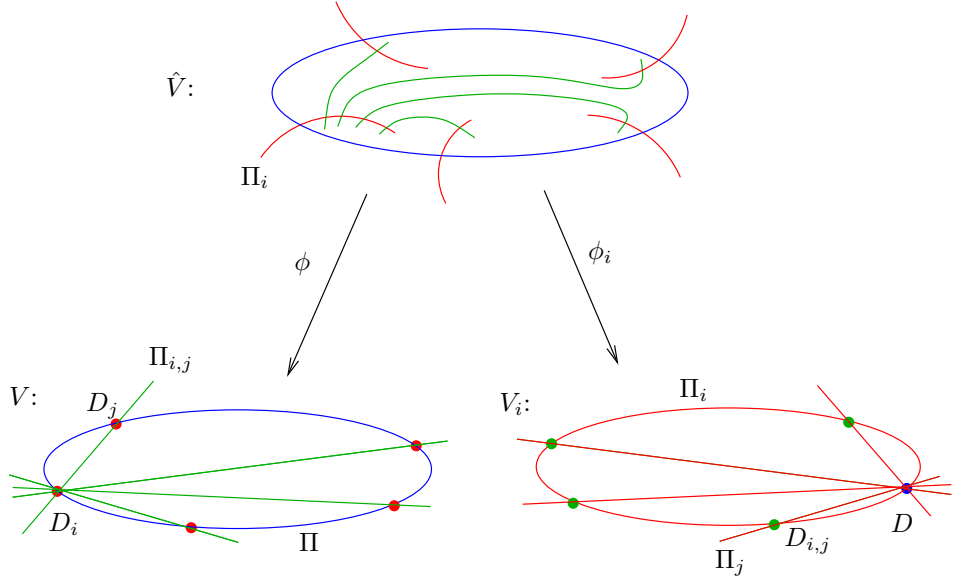}
 
\caption{Projective charts $V$, $\hat V$ and $V_i$'s and Del Pezzo geometry}
\label{figure:DelPezzo}
\end{center}
\end{figure}

{\bf Moving democratic weights}
Let us consider the moduli space $P_{-1}^{\bw}(\bt)$ for ``democratic'' weights $w_i=w$, $i=1,\ldots,n$,
and see how $P_{-1}^{\bw}(\bt)$ is varrying while $w$ goes from $0$ to $1$. At the beginning, when 
$w<\frac{1}{5}$, the weight is not admissible (see Section \ref{sec:GITbundle}) and $P_{-1}^{\bw}(\bt)=\emptyset$. 

For $\frac{1}{5}<w<\frac{1}{3}$,
$P_{-1}^{\bw}(\bt)=V$ is the main projective chart discussed above, the projective plane $\mathbb P_{\cc}^2$ with coordinates
$(b_0:b_1:b_2)$. 

When we pass to the next admissible chamber $\frac{1}{3}<w<\frac{3}{5}$, all five bundles $D_i$'s become unstable
while all five families $\Pi_i$'s now consist in stable bundles. The new moduli space of stable bundles $P_{-1}^{\bw}(\bt)$
is obtained from the previous one by blowing-up the five points $D_i$'s, that are replaced by the corresponding $\Pi_i$'s in the moduli space. We thus get a $5$-points blow-up of $\mathbb P_{\bb}^2$, a Del Pezzo surface of degree $4$.   Let us denote by $\hat V$ the Del Pezzo surface and $\phi:\hat V \to V \simeq \pp_{\bb}^2$ the blowing-up of 
five points  $D_i$'s.
  There are exactly $16$ rational curves having $-1$ self-intersection on it, namely the $5$ exceptional divisors 
$\Pi_i$'s (arising from blowing-up the $D_i$'s) and the strict transforms of 
the conic $\Pi$ and of the $10$ lines $\Pi_{i,j}$:
they precisely correspond to our $16$ families of special bundles (see Figure \ref{figure:DelPezzo}).

Finally, when we pass to the last chamber $\frac{3}{5}<w<1$, bundles of the family $\Pi$ become unstable
and the parabolic structure $D$ on the special bundle $E=\mathcal O_{X}(1)\oplus\mathcal O_{X}(-2)$ becomes stable.
The corresponding moduli space $P_{-1}^{\bw}(\bt)$ is thus obtained by contracting the $(-1)$-curve $\Pi\in\hat V$
onto the single point $D$.
This may be viewed as a $4$-points blow-up of $\mathbb P_{\cc}^2$ (the degree $5$ Del Pezzo surface).

{\bf Patching two charts}
Let us now focus on the two projective charts $V$ and $\hat V$. 
Restricting the blowing-up $\phi:\hat V \to V$ to the complement of $\Pi_i$, 
we have a natural isomorphism
$$
\phi^{0}:\hat V\setminus (\Pi_1\cup \Pi_2\cup \Pi_3\cup \Pi_4\cup \Pi_5)\stackrel{\sim}{\lra} V\setminus\{D_1,D_2,D_3,D_4,D_5\}
$$
that identify equivalence classes of bundles that both occur in $\hat V$ and $V$. Indeed, when $w$ crosses 
the special value $\frac{1}{3}$, sign of stability index changes only for those bundles $D_i$'s and $P_i$'s.
The map $\phi$ obviously contracts each $\Pi_i$ to $D_i$, it is the blow-up morphism. Now, the 
non reduced scheme obtained by patching together $V$ and $\hat V$ by $\phi^0$ is still an open subset 
of the  moduli space  $P_{-1}(\bt)$ that contains both $V$ and the non separated locus $\Pi_i$:
$$V\hookrightarrow V\cup_{\phi^{0}} \hat V\hookrightarrow P_{-1}(\bt).$$
In other words, by patching $\hat V$ to $V$ like above, we have added to $V$ the non separated 
points of $\Pi_i$'s. Mind that not only generic bundles are identified by $\phi^{0}$, also generic points 
of $\Pi$, for example, occur in both charts and are identified by $\phi^{0}$.

{\bf Geometry of Del Pezzo}
We want now to cover the full coarse moduli space $P_{-1}(\bt)$ by a finite number of smooth projective
charts and patch them together like we have just done. There are many projective charts
$V'\subset P_{-1}(\bt)$ that can be defined as moduli space of stable bundles $P_{-1}^{\bw}(\bt)$
(many chambers in the space of weights) but we do not need all of them to cover the coarse moduli space $P_{-1}(\bt)$.
We will use the classical geometry of the Del Pezzo surface $\hat V$ in order to select a few number of them.
First of all, note that $\hat V$ is dominating all other projective charts in the following sense:
if $V'$ is another chart, the natural birational map $\phi':\hat V\to V$ is actually a morphism.
Indeed, $V'$ only differ from $\hat V$ by the fact that some of the one-parameter families $\Pi'$
are contracted to points $D'$.

The Del Pezzo surface $\hat V$ contains $16$ rational $(-1)$-curves
that correspond, in our modular setting, to the $16$ families of special bundles.
Each of these ``lines'' intersects $5$ other ones with cross-ratio determined by the $t_i$'s. 
Recall that each of these curves are naturally parametrized by $X\supset\{t_1,t_2,t_3,t_4,t_5\}$
in our modular interpretation, and each poles $t_i$ corresponds to intersecting lines.

Appart from special symmetric values of the $t_i$'s,
the automorphism group of this Del Pezzo surface has order $16$ and it acts $1$-transitively on 
$-1$ rational curves: given $2$ of these lines $\Pi,\Pi'\subset\hat V$, there is exactly one automorphism
$\phi:\hat V\to\hat V$ sending $\Pi$ to $\Pi'$. This group has also modular interpretation. If we apply
two elementary transformations (see Section \ref{sec:twist&elm}), we get an automorphism
$$\mathrm{Elm}_{t_i}^-\circ\mathrm{Elm}_{t_j}^+: P_{-1}(\bt)\stackrel{\sim}{\lra}P_{-1}(\bt).$$
This automorphism must permute special points $D,D_i,D_{i,j}$'s and permute special families $\Pi,\Pi_i,\Pi_{i,j}$
and induces, in particular, an automorphism 
$$(\mathrm{Elm}_{t_i}^-\circ\mathrm{Elm}_{t_j}^+)\vert_{\hat V}: \hat V\stackrel{\sim}{\lra}\hat V$$
of the projective chart $\hat V$. It is straightforward to check 
that the group generated by these automorphisms has order $16$ and acts $1$-transitively on the $16$ lines.
Precisely, we have
$$(\mathrm{Elm}_{t_i}^-\circ\mathrm{Elm}_{t_j}^+): \Pi\stackrel{\sim}{\lra}\Pi_{i,j}$$
for all $i,j$ and 
$$(\mathrm{Elm}_{t_j}^-\circ\mathrm{Elm}_{t_k}^+)\circ(\mathrm{Elm}_{t_m}^-\circ\mathrm{Elm}_{t_n}^+): \Pi\stackrel{\sim}{\lra}\Pi_i$$
where $\{i,j,k,m,n\}=\{1,2,3,4,5\}$.

There are $16$ ways to go back to $\mathbb P_{\cc}^2$ by blowing-down $5$ curves in $\hat V$:
we have to choose $5$ lines intersecting a given one $\Pi'$. All these $\mathbb P_{\cc}^2$ correspond to 
moduli spaces for different choices of weights. Indeed, after an even number of elementary tranformations,
we can assume $\Pi'=\Pi$. So the chart $V'\simeq\mathbb P_{\cc}^2$ obtained by contracting those $5$ lines
intersecting $\Pi'$  in $\hat V$ is given as moduli space $V'=P_{-1}^{\bw}(\bt)$ with weights of the form
$w_i\in\{w,1-w\}$, $\frac{1}{5}<w<\frac{1}{3}$, with an even number of occurences $w_i=1-w$; note that 
there are $16$ such possibilities and we denote them by $V,V_i,V_{i,j}$ in the obvious way.

{\bf The whole moduli  space $P_{-1}(\bt)$}
Consider, like before, the projective charts $V_i\simeq\mathbb P_{\cc}^2$ obtained by contracting
in $\hat V$ the $5$ lines intersecting $\Pi_i$. The automorphism 
$$(\mathrm{Elm}_{t_j}^-\circ\mathrm{Elm}_{t_k}^+)\circ(\mathrm{Elm}_{t_m}^-\circ\mathrm{Elm}_{t_n}^+): P_{-1}(\bt)\stackrel{\sim}{\lra}P_{-1}(\bt)$$
where $\{i,j,k,m,n\}=\{1,2,3,4,5\}$, permutes the lines $\Pi$ and $\Pi_i$, and thus the charts $V$ and $V_i$;
it follows that $V_i=P_{-1}^{\bw}(\bt)$ for weights of the form
$w_i=w$ and $w_j=w_k=w_m=w_n=1-w$ with $\frac{1}{5}<w<\frac{1}{3}$.
It is then easy to check that these charts are enough to cover the whole coarse moduli space. Precisely, we have:
\begin{itemize}
\item $D\in V_i$ for all $i$,
\item $D_i\in V$,
\item $D_{i,j}\in V_i$ and $V_j$,
\item $\Pi\subset V,\hat V$,
\item $\Pi_i\subset \hat V$ and all $V_j$,
\item $\Pi_{i,j}\subset V$, $\hat V$ and all $V_k\not=V_i,V_j$.
\end{itemize}
We finally obtain the following description:
\begin{equation}\label{eq:patching}
P_{-1}(\bt)=\hat V \cup V\cup V_1\cup V_2\cup V_3\cup V_4\cup V_5
\end{equation}
where $\cup$ means that we identify all isomorphism classes of bundles that are shared 
by any two of these projective charts by means of the natural birational isomorphisms.
More precisely, we have the explicit blowing-down morphisms $\phi:\hat V\to V\simeq\mathbb P^2_\bb$
and $\phi_i:\hat V\to V_i\simeq\mathbb P^2_\cc$ introduced above and the patching (\ref{eq:patching})
are induced by $\phi$ and $\phi_i$ on the maximal open subsets where it is one-to-one.

The union of Proposition \ref{prop:patching5} is not sharp: we can delete one of the $V_i$'s, remaining
$6$ charts are enough to cover the whole moduli. However, we stress that we cannot delete $\hat V$.
Indeed, so far, we have not been very rigorous with those special $t_i$ points occuring
along our one-parameter families $X\stackrel{\sim}{\lra}\Pi,\Pi_i,\Pi_{i,j}$, namely those intersection
points between two such families. Let us explain on an example. The family $\Pi$ of those parabolic
structures $\bl$ on $E=\mathcal O_X\oplus\mathcal O(-1)$ such that all parabolics $l_i$'s are
contained in the same subbundle $\mathcal O_X(-2)\hookrightarrow E$. Such a subbundle is
determined, up to automorphism of $E$, by its intersection locus with the special subbundle $\mathcal O_X$:
there is a unique point $z\in X$ such that these two subbundles coincide over this point.
Since the parabolic structure is determined by $\mathcal O_X(-2)$, it is also determined 
by the corresponding point $z$ and we get a natural parametrization $X\stackrel{\sim}{\lra}\Pi$.
There are $5$ special points 
corresponding to the case where the two subbundles $\mathcal O_X(-2)$ and $\mathcal O_X$
coincide over $t_i$: then $l_i\subset\mathcal O_X$ and it is the intersection point with the family $\Pi_i$.
This special parabolic bundle in $\Pi$ occurs in $\hat V$, but not in our main chart $V$.
Indeed, stability assumption for $V$ excludes the possibility of $l_i\subset\mathcal O_X$,
and this special bundle is replaced by $D_i\in V$.

\subsection{The duality picture}

We now go back to the moduli space of connections $M^{\bw}(\bt,\bnu)$. An open subset $M^{\bw}(\bt,\bnu)^0$
is given by those connection $(E,\nabla,\boldsymbol{l})$ whose underlying parabolic bundle belongs to our main chart
$(E,\boldsymbol{l})\in V$. Recall that a natural compactification is obtained by adding projective Higgs bundles 
$$\mathrm{App}\times\mathrm{Bun}:\overline{M^{\bw}(\bt,\bnu)^0}\stackrel{\sim}{\lra}\pp_\ba^2\times\pp_\bb^2$$
and the boundary of $M^{\bw}(\bt,\bnu)^0$ corresponds to the incidence variety $\Sigma:\{a_0b_0+a_1b_1+a_2b_2=0\}$.
We would like to add to this picture those missing connections, i.e. the connections on missing parabolic bundles.

In order to do this, let us denote by $C$ the image of the diagonal through the map
$$\mathrm{Sym}:X\times X\to\mathrm{Sym}^2X=\pp_\ba^2\ ;\ (q_1,q_2)\mapsto (z-q_1)(z-q_2),$$
namely the
conic $C:\{a_1^2-4a_0a_2=0\}$, which is the locus of double roots $q_1=q_2$: 
it is naturally parametrized by our initial base curve
$$X\to C\ ;\ q\mapsto (q^2:-2q:1).$$
Those lines $a_0b_0+a_1b_1+a_2b_2=0$ tangent to the conic are defined by the dual conic $C^*:\{b_1^2-b_0b_2=0\}$ 
(denoted by $\Pi$ in the previous section) which is also naturally parametrized by our initial base curve
$$X\to C^*\ ;\ z\mapsto (1:z:z^2).$$
The locus $q=t_i$ of poles provide $5$ special points on the conic $C$, namely $(a_0:a_1:a_2)=(t_i^2:-2t_i:1)$,
and we will denote by $\Delta_{i}:\{t_i^2a_2+t_ia_1+a_0=0\}$ the line tangent to $C$ at this point. Any two of those lines intersect
at a point $\Delta_i\cap \Delta_j=\{P_{i,j}\}$ (outside of $C$); we get $10$ special points with coordinates $(t_it_j:-t_i-t_j:1)$.
Passing to the dual picture, we get $5$
points $D_i:=\Delta_i^*$ on the dual conic $C^*$ defined by $(b_0:b_1:b_2)=(1:t_i:t_i^2)$ and 
$10$ lines, $\Pi_{i,j}:=P_{i,j}^*$ passing through both $D_i$ and $D_j$ with equation $t_it_jb_0-(t_i+t_j)b_1+b_2=0$
(see Figure \ref{figure:Duality5}).

\begin{figure}[htbp]
\begin{center}
\input{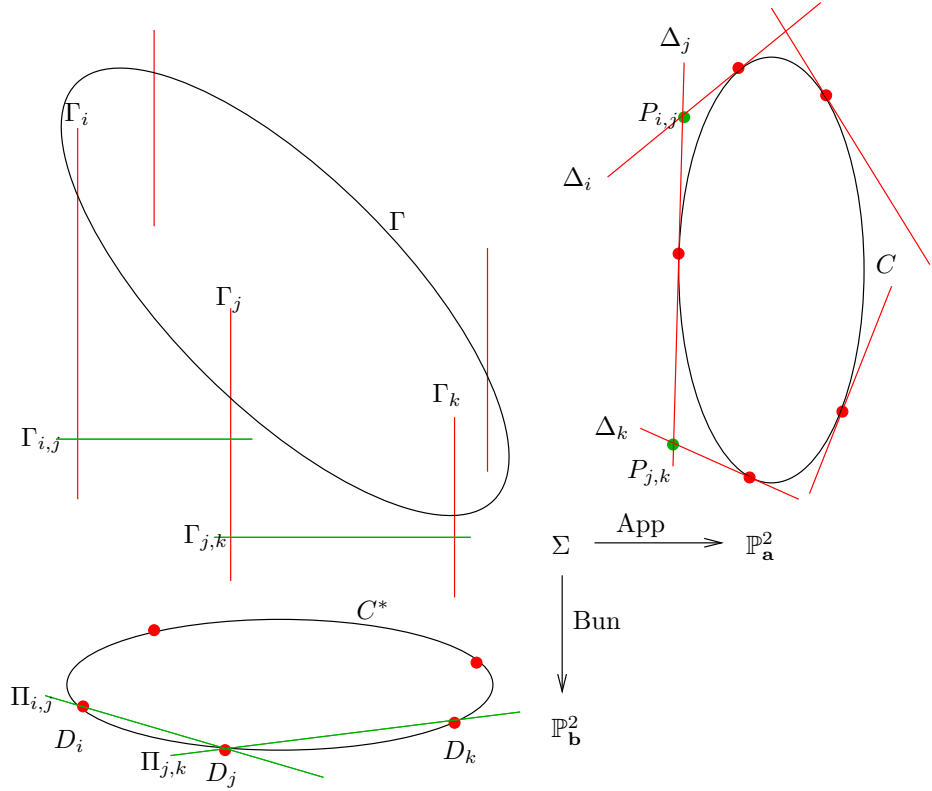}
\caption{Duality picture in the case $n=5$}
\label{figure:Duality5}
\end{center}
\end{figure}

Denote by $\Sigma\subset\mathbb P^2_{\mathbf{a}}\times\mathbb P^2_{\mathbf{b}}$ the incidence variety
defined by $a_0b_0+a_1b_1+a_2b_2=0$; recall that $\Sigma=M^{\bw}(\bt, \bnu)_H^0$ (see equation (\ref{eq:bd-bun})). 
The conic $C\subset \mathbb P^2_{\mathbf{a}}$ lifts-up as a rational curve
$\Gamma\subset\Sigma$ parametrized by 
$$\mathbb P^1\to \Gamma\ ;\ q\mapsto \left(\ (1:-2q:q^2)\ ,\ (q^2:q:1)\ \right)$$
(which projects down to both $C$ and $C^*$). It is defined by equations
$$a_1^2=4a_0a_2,\ \ \ a_0b_0=a_2b_2\ \ \ \text{and}\ \ \ 2a_2b_2+a_1b_1=0.$$
Inside $\Sigma$, we also get $5$ lines
$$\Gamma_i:=\Delta_i\times\{D_i\}$$
and $10$ more lines
$$\Gamma_{i,j}:=\{P_{i,j}\}\times\Pi_{i,j}.$$
All these $16$ curves intersect like the corresponding $16$
special rational curves in the Del Pezzo surface discussed in Section \ref{sec:Parabolic5} 
(the blow-up of $\mathbb P^2_{\mathbf{b}}$ at the $5$ points $D_i$, see picture \ref{figure:Duality5}); they moreover intersect transversally.

As we shall see, the locus of those connections that we have forgotten so far is given 
by points infinitesimally closed to some points of $\Sigma$, namely
\begin{itemize}
\item to $\Gamma_i$ for those connections on a bundle having
the parabolic $l_i\in\mathcal O_{X}$,
\item  to $\Gamma$ for those connections on $\mathcal O_{X}(1)\oplus\mathcal O_{X}(-2)$.
\end{itemize}
To recover the full moduli space, we will have (at least) to blow-up these curves.

\subsection{Those connections on $\mathcal O_{X}\oplus\mathcal O_{X}(-1)$ having a parabolic $l_i\in\mathcal O_{X}$}
To simplify formulae, set
$$\kappa_i:=\nu_i^+-\nu_i^-\ \ \ \text{for}\ i=0,1,t_1,t_2,\infty.$$
In order to recover such connections in our moduli space, we would like to construct, for each such connection
$(E,\boldsymbol{l}^0,\nabla^0)$, a deformation $(E,\boldsymbol{l}^t,\nabla^t)$ on the fixed bundle $E=\mathcal O_{X}\oplus\mathcal O_{X}(-1)$
such that it belongs to our main chart $M(\bt,\bnu)^0$ for $t\not=0$ (no parabolic $l_i^t$ is contained in $\mathcal O_{X}$).
We will do this for a connection $\nabla^0$ having $l_{t_1}^0\in\mathcal O_{X}$ and other $p_i$'s being generic 
(mainly $l_\infty\not\in\mathcal O_{X}$). After applying the elementary transformation $\mathrm{Elm}_\infty^+$,
we get a connection with parabolic structure in the chart $U$ with $u_1=\infty$. A deformation like above can be given by setting
$$c_1^t=-t\kappa_{t_1}+t^2\cdot c_1,\ \ \ c_2^t=c_2,\ \ \ u_1^t=\frac{1}{t} \ \ \ \text{and}\ \ \ u_2^t=u_2$$
(note that $\nabla_0-\frac{\kappa_{t_1}}{u_1}\Theta_1$ and $\frac{1}{u_1^2}\Theta_i$ have limit when $u_1\to\infty$).
By the way, we will get all connections for a generic parabolic structure $\boldsymbol{l}$ having $l_{t_1}\in\mathcal O_{X}$.
Going back with $\mathrm{Elm}_\infty^-$, we get a curve in $\mathbb P^2_{\mathbf{a}}\times\mathbb P^2_{\mathbf{b}}$
that tends to $\Sigma$ when $t\to 0$. The limit point is given by 
$$(a_2:a_1:a_0)\sim(1:-t_1-q:t_1q)\ \ \ \text{and}\ \ \ (b_2:b_1:b_0)\sim(t_1^2:t_1:0)$$
(i.e. we tend to a point of the special line $\Gamma_{t_1}$) with apparent points given by
$$q_1=t_1\ \ \ \text{and}\ \ \ q_2=\frac{t_2(c_2(u_2-1)-\rho-\kappa_{t_1}}{c_2(u_2-t_2)-\rho-\kappa_{t_1}}.$$
In order to distinguish between all connections having the same limit point (so far, $c_1$ does not appear for instance)
we have to blow-up $\Gamma_1$ and compute the limit point on the exceptional divisor $F_{t_1}$.
This latter one is parametrized by $q_2$ and the restriction of the projective coordinates
$$(u:v:w)\sim(\ b_2(t_1^2a_2+t_1a_1+a_0)\ :\ a_2(b_2-t_1b_1)\ :\ a_2(b_2-t_1^2b_0)\ ).$$
One easily check that, when $t\to 0$, the three entries above tend to $0$ but the triple projectively tends to
$$(u:v:w)\to(\ \frac{\kappa_{t_1}t_1^2t_2(t_2-1)}{c_2(u_2-t_2)-\rho-\kappa_{t_1}}\ :\ t_2(u_2-1)\ :\     (t_1+t_2)u_2-(t_1+1)t_2\ )$$
From the discussion of Section \ref{section:WallCrossing} and more particularly Section \ref{sec:Parabolic5},
it is not surprising that $u_2$, and thus the parabolic structure, is determined by the ratio $\frac{v}{w}=\frac{b_2-t_1b_1}{b_2-t_1^2b_0}$ which is also the coordinate 
for the blow-up of the point $D_{t_1}\in\mathbb P^2_{\mathbf{b}}$.
For $u_2$ fixed, we see that the parameter $c_2$ is determined by $q_2$, i.e. by the apparent map.
We still not see the parameter $c_1$ and cannot determine yet the limit connection. 
We have to blow-up once again. 

Precisely, we have now to blow-up the surface defined in $F_{t_1}$ by
$$(\rho+\kappa_{t_1})u+\kappa_{t_1}t_1(t_1+q_2)v-\kappa_{t_1}t_1q_2w=0.$$
One can check that the locus of those connections $p_{t_i}\in\mathcal O_{X}$
is parametrized by an open subset of the latter exceptional divisor $F_{t_1}'$.

\subsection{Those connections on $\mathcal O_{X}(1)\oplus\mathcal O_{X}(-2)$}

After an elementary transformation at the $5$ parabolics of the form
$$\mathrm{Elm}_0^+\circ\mathrm{Elm}_1^+\circ\mathrm{Elm}_\infty^+\circ\mathrm{Elm}_{t_1}^-\circ\mathrm{Elm}_{t_1}^-,$$ such a connection $(E=\mathcal O_{X}(1)\oplus\mathcal O_{X}(-2),\boldsymbol{l},\nabla)$ can be transformed into a trace-free connection $(E'=\mathcal O_{X}\oplus\mathcal O_{X},\boldsymbol{l}',\nabla')$ on the trivial bundle with the property that all parabolics $l_i'$ now lie along the diagonal section $\mathcal O_{X}(-1)\hookrightarrow\mathcal O_{X}\oplus\mathcal O_{X}$. We can now work in the chart $U$ of Section \ref{subsection:HiggsFields} and parametrize a small deformation, 
say $\nabla_t'$, on the trivial bundle whose parabolics
become generic (not lying anymore on a same $\mathcal O_{X}(-1)$) for $t\not=0$ and $\nabla_0'=\nabla'$. 
After coming back with the same $5$ elementary transformations (but opposite signs), we get a deformation $\nabla_t$ of connections
on the main bundle $E_t=\mathcal O_{X}\oplus\mathcal O_{X}(-1)$ for $t\not=0$ that tends to the initial connection
$\nabla_0=\nabla$ on the special bundle $E_0=\mathcal O_{X}(1)\oplus\mathcal O_{X}(-2)$. We thus get a curve
in our moduli space $\Sigma\subset\mathbb P^2_{\mathbf{a}}\times\mathbb P^2_{\mathbf{b}}$
that tends to $\Sigma$ when $t\to 0$. After Maple computations, we get the following.

First of all, the corresponding point in $\mathbb P^2_{\mathbf{a}}\times\mathbb P^2_{\mathbf{b}}$
tends to the ``conic'' $\Gamma$, the limit point depending on the first variation of the parabolics $l_i'$
at $t=0$: if we normalize so that the parabolic structure $\boldsymbol{l}'$ of $\nabla'$ is
$$(1:0),\ \ \ (1:1),\ \ \ (1:u_1'),\ \ \ (1:u_2')\ \ \ \text{and}\ \ \ (0:1)$$
(like notations of Section \ref{subsection:HiggsFields}) then the limit point on $\Gamma$ depends
on the slope $\lambda=\frac{u_2'-t_2}{u_1'-t_1}$ when $(u_1',u_2')\to(t_1,t_2)$.
Precisely, the limit point is 
$$\left(\ (1:-2q:q^2)\ ,\ (q^2:q:1)\ \right)\in\Gamma\ \ \ \text{where}\ \ \ 
q=\frac{t_1t_2((t_1-1)\lambda-(t_2-1))}{t_1(t_1-1)\lambda-t_2(t_2-1)}.$$ 
We fix this point 
from now on with genericity condition $q\not=0,1,t_1,t_2,\infty$.

At the neighborhood of $q$, the curve $\Gamma$ is given as complete intersection of
$$a_1^2=4a_0a_2,\ \ \ a_0b_0=a_2b_2\ \ \ \text{and}\ \ \ 2a_2b_2+a_1b_1=0.$$
Denote by $F$ the exceptional divisor obtained after blowing-up the curve $\Gamma$.
One can reduce our discussion to the hyperplane $2qa_2+a_1=0$ which is transversal to $\Gamma$.
Affine coordinates on $F$ are given by restricting the two rational functions
$$U=\frac{a_2}{b_2}\ \frac{b_2-qb_1}{q^2a_2-a_0}\ \ \ \text{and}\ \ \ V=\frac{a_2}{b_2}\ \frac{b_2-q^2b_0}{q^2a_2-a_0}.$$
Here, the strict transform of $\Sigma$ is given by $q^2(V-2U)+1=0$.
The limit of these two rational functions along a deformation $(E_t,\nabla_t)$ like above is
$$U\to \frac{1}{2q}\left(-\frac{2\rho+\kappa_0+5}{q}+\frac{\kappa_1-1}{q-1}+\frac{\kappa_{t_1}-1}{q-t_1}+\frac{\kappa_{t_2}-1}{q-t_2}\right)$$
and
$$V\to \frac{1}{q}\left(-\frac{\rho+\kappa_0+2}{q}+\frac{\kappa_1-1}{q-1}+\frac{\kappa_{t_1}-1}{q-t_1}+\frac{\kappa_{t_2}-1}{q-t_2}\right)$$
In particular, we can check that $q^2(V-2U)+1\to\rho+4\not=0$ for generic parameters $\kappa_i$.
This defines a curve $\Gamma'\subset F$ parametrized by $(1:-2q)=(a_2:a_1)$ on $F$ (or a single point since $q$ is fixed)
that we have to blow-up once again; let us call $F'$ the new exceptional divisor and still denote by $F$ the strict transform by abuse of notation. We then check by a direct computation that $F'\setminus (F\cap F')$ is the locus of those parabolic connections the bundle $E=\mathcal O_{X}(1)\oplus\mathcal O_{X}(-2)$. 

If we switch to Darboux coordinates, we can check that, along the above limit process, we get
$$q_1,q_2\to q\ \ \ \text{and}\ \ \ p_1,p_2\to\infty$$
with the constraints
$$p_1+p_2\to \frac{\rho}{\rho+4}\left(\frac{\kappa_0-1}{q}+\frac{\kappa_1-1}{q-1}+\frac{\kappa_{t_1}-1}{q-t_1}+\frac{\kappa_{t_2}-1}{q-t_2}\right)$$
and
$$\frac{p_1q_1+p_2q_2}{q}\to \frac{\rho}{\rho+4}\left(\frac{\kappa_0-3}{q}+\frac{\kappa_1-1}{q-1}+\frac{\kappa_{t_1}-1}{q-t_1}+\frac{\kappa_{t_2}-1}{q-t_2}\right).$$



\bibliographystyle{abbrv}

\bibliography{Garnier-2013-8-16.bib}

\end{document}